\date{}
\newtheorem{theorem}{\bf Theorem}[section]
\newtheorem{claim}[theorem]{\bf Claim}
\newtheorem{lemma}[theorem]{\bf Lemma}
\newtheorem{conj}[theorem]{\bf Conjecture}
\newtheorem{corollary}[theorem]{\bf Corollary}
\newtheorem{observation}[theorem]{\bf Observation}
\newtheorem{definition}[theorem]{\bf Definition}
\newtheorem{question}[theorem]{\bf Question}
\newtheorem{notation}[theorem]{\bf Notation}
\newcommand{\HH}{\mathcal H}
\newcommand{\rhat}{\hat{r}}
\newcommand{\eps}{\varepsilon}
\newcommand{\T}{\mathcal{T}}
\newcommand{\G}{\mathcal{G}}
\newcommand{\N}{\mathbb{N}}
\newcommand{\W}{\mathcal{W}}
\newcommand{\sarrow}{\stackrel{s}{\longrightarrow}}
\DeclareMathOperator{\grey}{grey}
\DeclareMathOperator{\level}{level}
\DeclareMathOperator{\type}{type}
\DeclareMathOperator{\types}{Types}
\DeclareMathOperator{\rd}{rd}
\DeclareMathOperator{\id}{id}
\renewcommand{\root}{\mathrm{root}}
\newcommand{\floor}[1]{\lfloor #1 \rfloor}
\renewcommand{\phi}{\varphi}
\renewcommand{\P}{\mathcal P}
\renewcommand{\phi}{\varphi}
\title{Size-Ramsey numbers of tight paths}
\author{
	Shoham Letzter\thanks{
	Department of Mathematics, 
	University College London, 
	Gower Street, London WC1E~6BT, UK. 
	Email: \texttt{s.letzter}@\texttt{ucl.ac.uk}. 
	Research supported by the Royal Society.
	}
	\and
	Alexey Pokrovskiy \thanks{
	Department of Mathematics, 
	University College London, 
	Gower Street, London WC1E~6BT, UK. 
	Email: \texttt{a.pokrovskiy}@\texttt{ucl.ac.uk}.
	}
	\and
	Liana Yepremyan\thanks{Department of Mathematics, Emory University, 
Atlanta, GA, USA. Email: {\tt lyeprem@emory.edu}. Research is supported by the National Science Foundation grant 2247013: Forbidden and Colored Subgraphs. Research in this paper was conducted while the author was a postdoctoral fellow at London School of Economics, funded by Marie Sklodowska Curie Global Fellowship, H2020-MSCA-IF-2018:846304.}} 
\date{}
\begin{document}

\maketitle

\begin{abstract}

	\setlength{\parskip}{\medskipamount}
	\setlength{\parindent}{0pt}
	\noindent
	
	The \emph{$s$-colour size-Ramsey number} of a hypergraph $H$ is the minimum number of edges in a hypergraph $G$ whose every $s$-edge-colouring contains a monochromatic copy of $H$. 
	We show that the $s$-colour size-Ramsey number of the $r$-uniform tight path on $n$ vertices is linear in $n$, for every fixed $r$ and $s$, thereby answering a question of Dudek, La Fleur, Mubayi and R\"odl (2017).
\end{abstract}

\section{Introduction}
\label{sec:intro}

A central problem in extremal combinatorics is to find conditions under which a graph $G$ is guaranteed to contain another graph $H$, and a more ``robust'' variant of this question asks for the existence of a monochromatic copy of $H$ in an edge-coloured graph $G$. Ramsey~\cite{Ramsey1930} initiated an already century-long line of research around such problems, by considering the numbers $R(k,\ell)$, defined to be the minimum number $n$ such that any red/blue edge-colouring of the complete graph $K_n$ contains a red $K_k$ or a blue $K_{\ell}$, and showing that they are finite. Early results of Erd\H{o}s and Szekeres \cite{ES1935} and Erd\H{o}s \cite{erdos1947some} give general upper and lower bounds on these numbers that remained the state-of-the-art for many years, especially for the interesting range $k = \ell$. 
Nevertheless, over the years several improved bounds on $R(k,k)$ were proved~\cite{spencer1977asymptotic,graham1987numbers, Thomason1988, Conlon2009, sah2023diagonal}, culminating with the major breakthrough of Campos, Griffiths, Morris and Sahasrabudhe \cite{CGMS}, who obtained the first significant improvement on the aforementioned early bound for $R(k,k)$, showing that it is at most $(4-c)^k$ for some constant $c > 0$, thereby solving the most famous open problem in the field. 
%
The study of off-diagonal Ramsey numbers, in particular, $R(3,k)$ has been another central topic in the area, that has driven the development of new probabilistic tools and techniques, such as the triangle-free process. We refer the reader to the most recent preprint on this topic by Campos, Jenssen, Michelen and Sahasrabudhe~\cite{CJMS}, who  improved the best-known lower bound on $R(3,k)$, thus narrowing the gap between the upper and lower bounds to a factor $3+o(1)$. 

More generally,  for two graphs or hypergraphs $\G$ and $\HH$ and an integer $s \ge 2$ write $\G \sarrow \HH$ if in every $s$-edge-colouring of $\G$ there is a monochromatic copy of $\HH$.  The \emph{Ramsey number} of a graph/hypergraph $\HH$ is defined as 	
\begin{equation*}
    r_s(\HH) = \min\{|\G|: \G \sarrow \HH\}.
\end{equation*}
    
In this paper we study a variant of classical Ramsey numbers, introduced by Erd\H{o}s, Faudree, Rousseau and Schelp \cite{erdos1978size} in 1978. The $s$-colour \emph{size-Ramsey number} of a hypergraph $\HH$, denoted $\rhat_s(\HH)$, is the minimum number of \emph{edges} in a hypergraph $\G$ satisfying $\G \sarrow \HH$. Namely,
	\begin{equation*}
		\rhat_s(\HH) = \min\{e(\G): \G \sarrow \HH\}.
	\end{equation*}
	When $s = 2$, we often omit the subscript $2$, and refer to the $2$-colour size-Ramsey number of $\HH$ as, simply, the \emph{size-Ramsey number} of $\HH$. It is easy to see that for any graph $H$, $\rhat(H)\leq {r(H) \choose 2}$, and it is known that for $H=K_n$, this inequality is tight~\cite{erdos1978size}, the latter often attributed to Chv\'atal.
    
    One of the earliest results regarding size-Ramsey numbers of graphs, obtained by Beck \cite{beck1983size} in 1983, asserts that, somewhat surprisingly, the size-Ramsey number of a path is linear in its length; more precisely, Beck showed that $\rhat(P_n) \le 900 n$ for large $n$. The problem of determining the size-Ramsey number of a path has been the subject of many papers \cite{bal2019new,beck1983size,bollobas2001random,bollobas1986extremal,dudek2015alternative,dudek2017some,letzter2016path}, and the currently best-known bounds are as follows:
    \begin{equation*}
        (3.75 + o(1))n \le \rhat(P_n) \le 74n,
    \end{equation*}
    where the lower bound is due to Bal and DeBiasio \cite{bal2019new} and the upper bound is by  Dudek and Pra{\l}at \cite{dudek2017some}. One can easily generalise Beck's arguments to the multicolour setting, showing that $\rhat_s(P_n) = O_s(P_n)$. This multicolour variant has received its own fair share of attention \cite{dudek2017some,krivelevich2019long,bal2019new,dudek2018note}; and currently the best-known bounds are as follows, where the lower bound is due to Dudek and Pra{\l}at \cite{dudek2017some} and  the upper  bound is by Krivelevich \cite{krivelevich2019long}.
    \begin{equation*}
        \rhat_s(P_n) = \Omega(s^2 n), \qquad \rhat_s(P_n) = O(s^2 \log s \cdot n).
    \end{equation*}

    In subsequent work, inspired by previous results on paths and trees, Beck~\cite{Beck1990} asked whether every bounded degree graph has size-Ramsey number linear in its number of vertices. The conjecture was proven to be true for trees  by Friedman and Pippenger~\cite{friedman1987expanding} and for cycles by Haxell, Kohayakawa and Luczak~\cite{haxell1995induced}, but eventually turned out to be false in general, as proven by R\"{o}dl and Szem\'eredi~\cite{rodl2000size} who constructed a sequence of graphs $(G_n)$ with $|G_n|=n$, $\Delta(G_n)\leq 3$ and such that $\rhat(G_n)\geq c n (\log{n})^{1/60}$. More recently, by carefully modifying their construction, Tikhomirov~\cite{tikhomirov2024} improved this lower bound to  $cn \exp(c\sqrt{\log{n}})$ (optimal bound for the method). Arguably one of the most interesting questions in this area remains open; a conjecture by R\"{o}dl and Szem\'eredi~\cite{rodl2000size} suggests that Beck's conjecture is false in a strong way, namely that for 
     every $d\geq 3$ there is $\varepsilon=\varepsilon(d)>0$ and a sequence of graphs $(G_n)$ on $n$ vertices and $\Delta(G_n)\leq d$ such that $\rhat(G_n) \geq n^{1+\varepsilon}$. Regarding upper bounds, Kohayakawa, R\"{o}dl, Schacht, and Szemer\'edi~\cite{kohayakawa2011sparse} proved that any $n$-vertex graph $H$ of maximum degree $d$ satisfies $\rhat(H)\leq n^{2-1/d+o(1)}.$ This was recently improved to  $\rhat(H)\leq n^{2-1/(d-1)+o(1)}$ for $d\geq 4$ by Allen and B\"ottcher~\cite{AllenB2022}. Better bounds have been obtained for triangle-free graphs by Conlon and Nenadov (see \cite{nenadov2016thesis}) and for cubic graphs, first by Conlon, Nenadov and Truji\'c~\cite{Conlon2022}, and later by Dragani\'c and Petrova~\cite{draganicpetrova}. A recent work of Dragani\'c, Kaufmann, Munh'a Correia, Petrova, and Steiner~\cite{DKMCPS} improves the bounds coming from~\cite{kohayakawa2011sparse}  in certain cases by considering treewidth as a parameter, along with maximum degree.

    In contrast, the study of size-Ramsey numbers of hypergraphs was initiated only recently (in 2017) by Dudek, La Fleur, Mubayi and R\"odl \cite{dudek2017size}. One of the first problems proposed in their paper is the following  generalisation of Beck's result \cite{beck1983size} regarding size-Ramsey numbers of paths. The \emph{$r$-uniform tight path} on $n$ vertices, denoted $P_n^{(r)}$, is the $r$-uniform hypergraph on vertex set $[n]$ whose edges are all sets of $r$ consecutive elements in $[n]$. Observe that $P_n^{(2)}$ is the path $P_n$ on $n$ vertices. Dudek, La Fleur, Mubayi and R\"odl  \cite{dudek2017size} asked if, similarly to the graph case, $\rhat\big(P_n^{(r)}\big) = O(n)$ for every $r$. This was answered affirmatively for $r = 3$ by Han, Kohayakawa, Letzter, Mota and Parczyk \cite{han2021size}. For $r \ge 4$, the best-known bound prior to our work was $\rhat(P_n^{(r)}) = O\big( (n \log n)^{r/2}\big)$, due to Lu and Wang \cite{lu2018size}. In our main result we settle the problem, and, in fact, prove a stronger statement where the number of colours is arbitrary (rather than $2$).

	\begin{theorem} \label{thm:main-path0}
		Fix integers $r, s \ge 1$. Then $\rhat_s\big(P_n^{(r)}\big)=O(n)$.
	\end{theorem}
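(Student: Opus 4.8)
The strategy follows the template behind all linear-size-Ramsey results for paths: produce a sparse host $G$ that is robustly ``expanding'', in a way no $s$-colouring can destroy, and then embed the target into whichever colour class survives. Two features of the $r$-uniform setting force new ideas. First, the binomial random $r$-graph on $\Theta(n)$ vertices is no help: at the density needed to force a tight path it already has roughly $n^{r-1}$ edges, well beyond the $O(n)$ we can afford, so the host must instead exploit the essentially one-dimensional shape of $P_n^{(r)}$. Second, there is no rotation--extension machinery for tight paths, so the embedding step must be organised differently.

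\emph{Host.} One natural choice is a ``layered'' $r$-graph: a vertex set partitioned into consecutive layers $V_1,\dots,V_L$, with each $|V_i|$ a slowly growing function of $n$, where every edge is contained in a bounded-width window $V_i\cup\dots\cup V_{i+K}$ of consecutive layers, the edges inside each window chosen pseudorandomly at a density tuned so that $e(G)=O(n)$. The layers are taken large enough that, however the edges inside a window are $s$-coloured, some colour still supplies many edges there; this forces the target tight path to thread many vertices through each window rather than one per layer. The point of the layered shape is twofold: a tight path that runs monotonically through the windows can never revisit a vertex, so self-avoidance --- the usual obstruction for hypergraph embeddings --- comes for free; and its ``frontier'' is an ordered $(r-1)$-tuple supported on the last few layers, so advancing the embedding corresponds to one step in an auxiliary coloured ``transition'' structure between consecutive windows.

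\emph{Surviving the colouring, and embedding.} Given an $s$-colouring of $E(G)$, I would show --- by a depth-first-search/rejection argument, the hypergraph analogue of Krivelevich's argument for coloured random graphs --- that there is a colour $c$ and a run of $\Theta(L)$ consecutive windows along which the colour-$c$ transition structure keeps expanding: running the greedy ``set of reachable frontiers'' process forward, the reachable set never dies out, because $G$ is built so that, for every not-too-large family of frontiers, many edges leave it in at least one colour. Since the run has length proportional to $L$ and the path threads many vertices per window, the monochromatic tight path one reads off is longer than $n$; tracing the process backwards makes an explicit copy of $P_n^{(r)}$, self-avoiding by construction. The dependence on $r$ and $s$ lives in the constants and in a union bound over colourings.

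The heart of the difficulty is the tension between the last two ingredients. The robust property has to be weak enough to survive every $s$-colouring in some colour --- which pushes towards a bare expansion statement --- yet strong enough to force a tight path of full length --- which, tight-path embeddings being rigid, pushes the other way. Isolating the right notion of a ``tight-path expander'' for $r$-graphs, building a host that has it robustly with only $O(n)$ edges, and proving the matching embedding lemma is the crux; the density estimates, the union bound and the back-tracking should then be routine. (An alternative would handle two colours first and boost to $s$, or induct on $r$ by relating tight paths in $(r-1)$- and $r$-uniform hosts, but I would expect a direct argument to be cleanest.)
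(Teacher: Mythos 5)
Your high-level strategy (a sparse, robustly expanding host, plus an embedding argument that survives any $s$-colouring) matches the spirit of the paper, but both of your concrete ingredients diverge from what the paper actually does, and neither as stated fills the gap you correctly identify as ``the crux''.

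On the host: the paper does not use a layered or windowed $r$-graph. It takes a bounded-degree $\eps$-expander $G$ on $n$ vertices (Lemma~\ref{Lemma_expander_existence}) and uses $\mathcal G^p_{(r)}$ --- the $r$-sets of vertices pairwise within distance $p$ in $G$ --- as the host; this has $O(n)$ edges because $\Delta(G)=O(1)$. There is no layer structure and no per-window pseudorandom choice, and the Ramsey-type input is a purely \emph{graph}-level statement about $2$-colourings of powers of expanders (Lemma~\ref{Lemma_expander_ramsey_one_Kn}): $G^c\to(P_{n/c},\dots,P_{n/c},K_d)$. Your layered host would need to support a tight-path Ramsey property with only $O(m)$ edges per window of $\Theta(m)$ vertices, i.e.\ essentially Hamilton-path density after the adversary takes their cut, and you give no mechanism for that; the layer sizes ``slowly growing in $n$'' makes the per-window density vanish.

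On the embedding: the paper does not run a frontier/DFS process on the hypergraph. It argues by contradiction. Assuming no long monochromatic tight walk, it associates a rooted tree $T(v)$ to each vertex and builds an \emph{auxiliary 2-graph colouring} of $G^k$ encoding, for each pair $u,v$, whether there is a short monochromatic tight walk of some colour between subsets of $L(T(u))$ and $L(T(v))$ of the same ``type''. Lemma~\ref{Lemma_expander_ramsey_one_Kn} then forces either a long auxiliary monochromatic path --- which is converted (via versatile sets, cleanliness, and Lemma~\ref{Lemma_path_between_versatile_sets}) into a long monochromatic tight walk, contradicting the assumption --- or many disjoint grey $K_d$'s, which are used to \emph{augment} the tree family to a taller one that is ``$k$-disconnected''. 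Iterating raises the height; the contradiction comes from Lemma~\ref{Lemma_disconnection_bounder}, which says $1$-disconnected families of trees of height $h$ do not exist once $h$ is large. None of this resembles a reachable-frontier process.

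The substantive gap in your proposal is exactly the sentence you flag: ``isolating the right notion of a tight-path expander \dots is the crux.'' For $r\ge 3$ the frontier of a tight path is an ordered $(r-1)$-tuple, and with an $O(n)$-edge host the number of candidate frontiers vastly exceeds what a naive expansion/DFS argument can certify survives all $s$-colourings; this is precisely why the problem was open for $r\ge4$ prior to this paper. The paper's entire apparatus --- ordered forests, types, separated and clean tree families, $k$-disconnectedness, versatile sets --- \emph{is} the replacement for that missing notion. As written, your proposal poses the right question but does not supply the key idea that answers it, so it does not constitute a proof.
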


     We did not try to optimize the dependency on $r$ and $k$ in $O_{r,k}(n)$ term but recently, new lower bounds on $\rhat_s\big(P_n^{(r)}\big)$ have been obtained by Winter~\cite{winter2023lower} and Bal, DeBiasio and Lo~\cite{bal2024lower}.
     
	Our methods are also powerful enough to show that Ramsey numbers of other hypergraphs are linear; these hypergraphs include powers of tight paths, tight hypergraph trees and their powers, and long subdivisions of hypergraphs of bounded degree. The proofs for these more general results are significantly more technical (though they follow exactly the same proof approach as Theorem~\ref{thm:main-path0}) and can be found in the unpublished manuscript~\cite{letzter2021size}.
    In fact, to the best of our knowledge, all previous results about graphs and hypergraphs with linear size Ramsey numbers are implied by the main result in \cite{letzter2021size}. In particular, an immediate corollary of our main result there is that fixed powers of bounded degree trees have linear multicolour size Ramsey number, a result due to, independently, Kam\v{c}ev, Liebenau, Wood and Yepremyan (for two colours) and Berger, Kohayakawa, Maesaka, Martins, Mendon\c{c}a, Mota and Parczyk \cite{berger2020size}. This, in turn, generalises previous work about powers of paths \cite{clemens2019size,han2020multicolour} that addressed a question of Conlon~\cite{conlon2016}. 
    Another immediate corollary of the main result in \cite{letzter2021size} is that `long subdivisions' of bounded degree graphs have linear multicolour size-Ramsey numbers, a result by Dragani\'c, Krivelevich and Nenadov \cite{draganic2021rolling}, which confirmed a conjecture of Pak \cite{pak2002mixing}.


\section{Proof overview}
	In a graph $G$, denote by $d_G(x,y)$ the distance between $x$ and $y$ in $G$, i.e.\ the length of the shortest path from $x$ to $y$.
	\begin{definition}[Powers]
		For a graph $G$, and a positive integer $k$, let $G^k$ denote the graph formed by connecting every pair of vertices with $d_G(x,y)\leq k$. We define $\mathcal G^k_{(r)}$ to be the $r$-uniform hypergraph on $V(G)$ whose hyperedges are all $r$-sets of vertices that are within distance at most $k$ from each other in $G$ (i.e.\ $E(\mathcal G^k_{(r)})=\{\{v_1, \dots, v_r\}\subseteq V(G): d_{G}(v_i, v_j)\leq k \text{ for all $i,j \in [r]$}\}$). 
	\end{definition}
	Throughout the paper $r$ will be a fixed integer which is at least $3$, so we abbreviate $\G^k_{(r)}$ as $\mathcal G^k$. 
	To give a linear upper bound on $\rhat_s\big(P_n^{(r)}\big)$ we need to come up with a hypergraph $G$ with $O(n)$ edges that satisfies $G\sarrow P_n^{(r)}$. The host graph we use is a bounded degree expanding graph $G$ and show that for some large constant $p$, we have $\mathcal G^p\sarrow  P_n^{(r)}$. This sort of construction dates back to \cite{clemens2019size}, and has been used in extensively since~\cite{berger2020size,kamcev2019size,han2021size,han2020multicolour,kohayakawa2019size}. 
 There is  one main property of $G$ that we use, namely that 	\begin{equation} \label{eqn:expander-ramsey} 		G^c\longrightarrow \Big(\overbrace{P_{n/k}, \dots, P_{n/k}}^t, K_d\Big).	\end{equation}	
 This is proved in Lemma~\ref{Lemma_expander_ramsey_one_Kn}. 	The basic structure of our proof is the following. 
	We consider an $s$-colouring of $\mathcal G^p$, and suppose for contradiction that there are no monochromatic tight paths of length $\Omega(n)$.
	To each vertex $v\in V(G)$ we associate a rooted tree $T(v)$, denoting $\mathcal{T}=\{T(v): v\in V(G)\}$. 
	Then we repeat the following procedure:
	\begin{enumerate}[label = \ding{118}]
		\item \label{itm:step-1}
			We consider a carefully defined auxiliary colouring $\phi$ of the graph $G^{k}$, based on $F$. By \eqref{eqn:expander-ramsey} (with $t$ an appropriate function of $s$ and $r$), there are two possibilities:
			\begin{enumerate}[label = \rm(\alph*)]
				\item \label{itm:step-a}
					If $\phi$ has a monochromatic path of length $\Omega(n)$ in one of the first $t$ colours, then this will produce a  tight path of length $\Omega(n)$ in $\mathcal G^p$ (contradicting our initial assumption).
				\item \label{itm:step-b}
					If instead $\phi$ has many disjoint monochromatic $K_d$'s in the last colour, then we use their presence to deduce that the original colouring had a certain amount of ``structure'' to it. We use this structure to define a new family $F'$, and return to step 1 with this family $F'$.
			\end{enumerate}
	\end{enumerate}
	By repeating this process, we gradually find more and more ``structure'' in our original colouring. Note that the parameter $k$ changes over iterations but this eventually leads to a contradiction, because  we also prove that there is an absolute limit to how much ``structure'' a colouring can have.

	\vspace{.3cm} 

	The big thing missing from the above overview is an indication of what ``structure'' we find in our colourings. The definition of this is new and quite complicated --- in fact, almost all the details of this paper are there to formally define this structure and prove lemmas to work with it. In the remainder of this section we give some informal indication of how things work.

	First, the sets $T(v)$ we associate with each vertex $v$ above are trees in the graph $G^{k}$ for some suitable power $k$.  Given a set $S$ of leaves of any  rooted tree $T$ we define $\type(S)$ to be the isomorphism class of the subtree of $T$ whose leaves are $S$ and the root is the original root of $T$. The auxiliary colouring of $G^k$ in step 1 roughly joins $x,y\in V(G)$ by an edge whenever there exist two $(r-1)$-sets $A,B\subseteq V(T(v))$ of the same type and a short monochromatic tight path $P_{xy}$ between them, and colour $xy$ by $(\tau, c)$ where $\tau$ is the type of $A$ and $c$ is the colour of $P_{xy}$. It can be shown that long paths in this colouring of $G^k$ can be used to build long tight paths in $\mathcal G^p$ (though this is far from obvious, and requires a lot of additional machinery based around concatenating short tight paths).
	Thus, if step \ref{itm:step-a} fails, then we obtain disjoint sets $\{T(v_1), \dots, T(v_d)\}$ with the property that there is no short monochromatic tight path from a subset $A \subseteq V(T(v_i))$ to a subset $B \subseteq V(T(v_j))$, for any $A, B$ of the same type and any $i\neq j$. This allows us to find a new family $F'=\{T'(v): v\in V(G)\}$, where the sets $T'(v)$ are trees which are taller than the trees $T(v)$, and with the following property: ``for any $x,y$ with $d_G(x,y)\leq k$, there is no short tight path between any pair of subsets of $F'(x)$ and $F'(y)$ of the same type''. We call such a colouring ``$k$-disconnected'' --- and this is exactly the ``structure'' we build throughout the proof. One of our lemmas (Lemma~\ref{Lemma_disconnection_bounder}) says that there do not exist $k$-disconnected families of trees of arbitrarily large height --- this is what we meant by ``there is an absolute limit to how much structure  a colouring can have'', and this is what gives us our final contradiction. 

	The structure of the paper is as follows. In the next section we define expanders and prove that they satisfy \eqref{eqn:expander-ramsey}. In Section~\ref{subsec:tree-intro} we introduce tree families $F$, and give a formal definition of what we mean by ``type''. In Section~\ref{Section_cleaning}, we prove some Ramsey-type lemmas about tree families (which are needed for concatenating short monochromatic tight paths later in the proof, and also for proving Lemma~\ref{Lemma_disconnection_bounder}). In Section~\ref{Section_distances} we formally define $k$-disconnectedness, and prove that there do not exist $k$-disconnected families of trees of arbitrary height (Lemma~\ref{Lemma_disconnection_bounder}). In Section~\ref{Section_augmentation}, we define an operation called ``forest augmentation'' which is how the forest $F$ changes between different iterations of \ref{itm:step-1}. In Section~\ref{Section_versatile} we introduce and study things called ``versatile sets of vertices'', which are another tool we need for concatenating short tight paths. In Section~\ref{Section_proof} we prove Theorem~\ref{thm:main-path0}.
 
\section{Ramsey properties of expanders}
	The Ramsey graphs we construct are based around expanders.
	\begin{definition}[Expander]
		A graph $G$ on $n$ vertices is an \emph{$\eps$-expander} if for any two sets of vertices $A,B$ of size at least $\eps n$, there is an edge starting in $A$ and ending $B$.
	\end{definition}
	It is a standard result that there exist arbitrarily large $\eps$-expanders of bounded degree (e.g.\ $G(n,C/n)$ satisfies this after deleting high degree vertices. See~\cite{letzter2021size}, Proposition 3.2).
	\begin{lemma} \label{Lemma_expander_existence}
		For all $\eps>0$ and large $n$, there exist connected $\eps$-expanders $G$ on $n$ vertices with $\Delta(G)\leq \eps^{-2}$.
	\end{lemma}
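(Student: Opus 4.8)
The plan is to obtain the expander as a random graph with a tiny bit of cleanup. Fix $\eps > 0$ and set $C = C(\eps)$ to be a large constant (say $C = 100\eps^{-2}$, to be pinned down at the end). Consider $G_0 = G(n, C/n)$, the binomial random graph. First I would record two standard facts about $G_0$ that hold with probability $1-o(1)$: (i) $G_0$ has the expansion property that \emph{every} pair of disjoint vertex sets $A, B$ with $|A|, |B| \ge \eps n / 2$ spans at least one (in fact many) edges; and (ii) the number of vertices of degree exceeding $\eps^{-2}$ is at most $\eps n / 2$. Fact (i) follows from a union bound: for fixed $A, B$ the probability of no edge between them is $(1 - C/n)^{|A||B|} \le e^{-C|A||B|/n} \le e^{-C\eps^2 n/4}$, and there are at most $4^n$ choices of the pair $(A,B)$, so choosing $C$ with $C\eps^2/4 > \ln 4$ makes the union bound $o(1)$. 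Fact (ii) follows from a first-moment computation: the expected degree is $C(n-1)/n < 2C$, a constant, so by Chernoff the probability that a given vertex has degree $> \eps^{-2}$ is at most some constant $\delta = \delta(\eps, C) < \eps/3$ once $\eps^{-2}$ is large relative to $C$ — wait, here I need $\eps^{-2}$ to dominate $C = 100\eps^{-2}$, which it does not. So instead I would take $C$ to be an \emph{absolute} constant (independent of $\eps$) large enough for (i), e.g.\ $C = 10$ suffices since we may use sets of size $\eps n/2$; then the expected degree is $< 2C = 20$, and since $\eps^{-2} \to \infty$ as $\eps \to 0$ we may assume $\eps$ is small enough that $\eps^{-2} > 40$, whence Chernoff gives $\Pr[\deg(v) > \eps^{-2}] \le \Pr[\deg(v) > 2\mathbb{E}\deg(v)] \le e^{-\mathbb{E}\deg(v)/3} \le e^{-20/3 \cdot (1/2)}$, a small absolute constant; so the expected number of bad vertices is $\le \eps n/3$ for $n$ large, and Markov gives (ii) with probability $\ge 1/2$. (For the finitely many larger values of $\eps$ one checks the statement is vacuous or handles them by hand.)

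Next I would fix a graph $G_1$ on vertex set $[n]$ satisfying (i) and (ii) simultaneously, which exists since each holds with probability bounded away from $0$ (indeed $1-o(1)$ and $\ge 1/2$). Let $X$ be the set of vertices of $G_1$ of degree $> \eps^{-2}$, so $|X| \le \eps n/2$, and let $G_2 = G_1 - X$. Then $\Delta(G_2) \le \eps^{-2}$ by construction. For expansion: given $A, B \subseteq V(G_2)$ with $|A|, |B| \ge \eps|V(G_2)| \ge \eps(n - \eps n/2) \ge \eps n/2$, property (i) applied in $G_1$ yields an edge between $A$ and $B$; this edge survives in $G_2$ since $A, B$ avoid $X$. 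Hence $G_2$ is an $\eps$-expander on $n' := |V(G_2)| \ge n/2$ vertices with maximum degree at most $\eps^{-2}$. Since $n$ was an arbitrary large integer and $n' \ge n/2 \to \infty$, this produces arbitrarily large expanders of the required maximum degree.

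It remains to arrange connectedness and to hit exactly $n$ vertices rather than $n'$. For connectedness: an $\eps$-expander on $m$ vertices has at most one connected component of size $\ge \eps m$ (two such components would contradict expansion, as there is no edge between disjoint components), so $G_2$ has a ``giant'' component $G_3$ on $\ge (1-\eps)n'$ vertices together with small components totalling $< \eps n'$ vertices; discard the small components. The graph $G_3$ is connected, and it is still an $\eps$-expander: sets of size $\ge \eps |V(G_3)|$ have size $\ge \eps(1-\eps)n' \ge \eps n'/2$ — here I would instead have chosen the constant in (i) to tolerate sets of size $\eps n/4$, so that after the two rounds of vertex deletion (removing $X$ and removing the small components, each costing at most an $\eps$-fraction) the surviving sets are still large enough to invoke (i). Finally, to get exactly $n$ vertices for every large $n$: run the construction with parameter $N = \Theta(n)$ chosen so that the resulting connected expander $G_3$ has \emph{at least} $n$ vertices, then delete arbitrary vertices down to $n$; alternatively, and more cleanly, since the construction yields connected $\eps$-expanders on a set of sizes with bounded gaps, one reparametrises at the outset. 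I expect the only genuinely delicate point is the bookkeeping of the $\eps$-losses through the three deletions (high-degree vertices, small components, trimming to size $n$): one must choose the expansion threshold in the random-graph step to be $\eps/4$ rather than $\eps$ so that there is enough slack, and choose the absolute constant $C$ accordingly in the union bound; everything else is routine.
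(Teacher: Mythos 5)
Your overall plan --- take $G(n,C/n)$, delete high-degree vertices, discard small components --- is the same approach the paper attributes (by reference) to \cite{letzter2021size}, Proposition~3.2, so the route is the intended one. But there is a genuine gap at the heart of the argument, which your own ``wait, here I need $\eps^{-2}$ to dominate $C$'' remark exposes without actually resolving.

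With the crude $4^n$ bound on the number of pairs $(A,B)$, the union bound for property~(i) forces $C(\eps/2)^2 > \ln 4$, i.e.\ $C > 22\eps^{-2}$. Your fix --- ``take $C$ to be an absolute constant, e.g.\ $C=10$'' --- simply does not satisfy this inequality once $\eps<1$; the aside ``since we may use sets of size $\eps n/2$'' changes nothing, since the sets were already that size. So with your estimates $C$ must be $\Omega(\eps^{-2})$, the mean degree in $G(n,C/n)$ is $\approx C \gg \eps^{-2}$, and hence nearly every vertex has degree exceeding $\eps^{-2}$; the cleanup step would delete essentially the whole graph. The cure is to replace $4^n$ by $\binom{n}{\eps n/2}^2 \le (2e/\eps)^{\eps n}$: then the union bound needs only $e^{-C\eps^2 n/4}(2e/\eps)^{\eps n}=o(1)$, i.e.\ $C > 4\eps^{-1}\ln(2e/\eps)$. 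This is $O(\eps^{-1}\log \eps^{-1})=o(\eps^{-2})$, so for small $\eps$ the target degree $\eps^{-2}$ sits far above the mean degree $\approx C$; Chernoff then gives $\Pr[\deg(v)>\eps^{-2}]\le\exp(-\Omega(\eps^{-2}))\ll\eps$, and the cleanup goes through. (Large $\eps$ can be handled separately, as you say.) You also hand-wave the final step of hitting exactly $n$ vertices while preserving connectedness and expansion; one concrete route is to start with $N=(1+10\eps)n$ vertices, produce a connected $(\eps/4)$-expander on $m\ge n$ vertices, and repeatedly delete a leaf of a spanning tree until exactly $n$ remain --- each deletion preserves connectedness, and removing $O(\eps n)$ vertices degrades the expansion ratio only from $\eps/4$ to at most $\eps$. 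This bookkeeping should be made explicit rather than deferred.
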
 
	Our goal in this section is to prove that powers of expanders have a certain Ramsey property (Lemma \ref{Lemma_expander_ramsey_MAIN}).
	We will need the following three well-known bipartite Ramsey results. Here the notation $G \longrightarrow (H_1, \dots, H_k)$ means that in every $k$-edge-colouring of $G$ there is a copy of $H_i$ in colour $i$, for some $i \in [k]$.
	\begin{lemma} \label{Lemma_RPnKnn_2_colour}
		$K_{3n,3n}\longrightarrow (P_n, K_{n,n})$.
	\end{lemma}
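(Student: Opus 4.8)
The plan is to prove the equivalent statement that any red/blue colouring of $K_{3n,3n}$ with parts $X,Y$ that contains no red $P_n$ must contain a blue $K_{n,n}$. The tool I would use is depth-first search (DFS) on the red graph $R$, exploiting the fact that a run of DFS exhibits, at every moment, both a red path (the current search stack) and a pair of vertex sets with no red edge between them (the finished vertices and the not-yet-visited vertices). Concretely, run DFS on $R$, restarting from an arbitrary new root each time the stack empties, and at each step partition $V(R)=X\cup Y$ as $S\cup P\cup T$, where $S$ is the set of finished vertices, $P$ the set of vertices on the stack, and $T$ the set of vertices not yet visited. Two standard invariants hold throughout: (i) the stack, read from bottom to top, is a path of $R$; and (ii) there is no red edge between $S$ and $T$, because a vertex is moved into $S$ only after all of its neighbours have been visited, and $T$ only ever loses vertices.

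Assume now that there is no red $P_n$; then invariant (i) forces $|P|\le n-1$ at every step. As the search runs, $|S\cap X|$ increases from $0$ to $3n$, changing by at most $1$ at each step, so it equals $n$ at some step; fix the first such step. If at that step $|T\cap Y|\ge n$, then by (ii) no red edge joins $S\cap X$ to $T\cap Y$, so $S\cap X$ together with any $n$ vertices of $T\cap Y$ induce an all-blue $K_{n,n}$, and we are done. Otherwise $|T\cap Y|\le n-1$, and then $|P\cap Y|,|P\cap X|\le n-1$ give
\[
  |S\cap Y|\;\ge\;3n-|T\cap Y|-|P\cap Y|\;\ge\;n+2
  \qquad\text{and}\qquad
  |T\cap X|\;=\;3n-|S\cap X|-|P\cap X|\;\ge\;n+1 ,
\]
so by (ii) any $n$ vertices of $S\cap Y$ together with any $n$ vertices of $T\cap X$ again induce an all-blue $K_{n,n}$. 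Either way we obtain the desired blue $K_{n,n}$.

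This argument has no serious obstacle; the only care needed is in the DFS bookkeeping — checking that invariant (ii) survives every push, pop, and restart, and that $|S\cap X|$ genuinely attains the value $n$ rather than skipping it. The one idea worth isolating is that DFS on the red graph yields simultaneously, and for free, a long red path (if the stack ever grows to $n$ vertices) and an all-blue bipartite box between the finished and untouched sides (if it never does); the choice of the stopping moment $|S\cap X|=n$ is exactly what balances the two possible outcomes so that one part of the blue $K_{n,n}$ lies in $X$ and the other in $Y$.
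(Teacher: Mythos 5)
Your proof is correct. It is, at heart, the same idea as the paper's: the result the paper cites (a partition of the vertex set into a path $P$ and two sets $A,B$ with $|A|=|B|$ and no edges between them) is itself proved by exactly the DFS you describe, so both arguments ultimately rest on the invariant that DFS simultaneously exhibits a path (the stack) and a no-edge separation (finished vs.\ unvisited). Where the two differ is the bookkeeping at the end. The paper keeps $|A|=|B|$, uses that the red path alternates between $X$ and $Y$ to get $|V(P)\cap X|=|V(P)\cap Y|$, and combines these to deduce $|A\cap X|=|B\cap Y|\ge n$. You avoid the balance condition and the parity/alternation step entirely by choosing a stopping time for the DFS, namely the first moment $|S\cap X|=n$, and then counting directly, which makes the argument self-contained and slightly shorter. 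Both are perfectly valid; your version is marginally more elementary since it does not invoke the cited balanced-partition lemma as a black box.
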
 
	\begin{proof}
		Let $K_{3n,3n}$ be red/blue coloured. Let the parts of $K_{3n,3n}$ be $X,Y$. 
		It is known that one can partition the vertices of a graph into a path $P$ and two disjoint sets of vertices $A,B$ with $|A|=|B|$, such that there are no edges between $A$ and $B$ (see \cite{ben2012long}). Apply this to the subgraph of red edges to get a red path $P$ and corresponding sets $A,B$. If $|P|\geq n$ we are done, so suppose that $|P|<n$. Since $|A|=|B|$, the number of vertices outside $P$ is even and so $|P|$ is even. Since vertices of $P$ alternate between $X$ and $Y$, this tells us that $|V(P)\cap X|=|V(P)\cap Y|\leq n$, or, equivalently, $|A\cap X|+|B\cap X|=|A\cap Y|+|B\cap Y|\geq 2n$. Without loss of generality, $|A\cap X|\geq |B\cap X|$, which gives $|A\cap X|\geq n$. Adding   $|A\cap X|+|A\cap Y|=|A|=|B|=|B\cap X|+|B\cap Y|$ to  $|A\cap X|+|B\cap X|=|A\cap Y|+|B\cap Y|$, we get $|A\cap X|=|B\cap Y|$. Now, we have two sets $A'=A\cap X$ and $B'=B\cap Y$ of size at least $n$ with no red edges between them (since $A'\subseteq A, B'\subseteq B$), and also no non-edges between them (since $A'\subseteq X, B'\subseteq Y$) i.e.\ they form a blue complete bipartite graph $K_{n,n}$.
	\end{proof}

	\begin{lemma}\label{Lemma_RPnKnn_r_colour}
		$K_{3^rn,3^rn}\longrightarrow \Big(\overbrace{P_n,\dots, P_n}^{\text{$r$ times}}, K_{n,n}\Big)$ for every integer $r \ge 1$.
	\end{lemma}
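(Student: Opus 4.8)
The plan is to prove this by induction on $r$, with Lemma~\ref{Lemma_RPnKnn_2_colour} serving as the base case: for $r=1$ the statement reads $K_{3n,3n}\longrightarrow(P_n,K_{n,n})$, which is exactly Lemma~\ref{Lemma_RPnKnn_2_colour}. So suppose the claim holds for some $r\ge 1$, and consider an arbitrary colouring of $K_{3^{r+1}n,3^{r+1}n}$ with colours $1,\dots,r+1$ (the ``path'' colours) together with one further colour, say colour $\star$ (the ``$K_{n,n}$'' colour).

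The key move is to collapse colour $r+1$ together with colour $\star$ into a single new colour $\diamond$. This turns our colouring into an $(r+1)$-colouring of $K_{3^{r+1}n,3^{r+1}n}$, with path colours $1,\dots,r$ and one extra colour $\diamond$. Since $3^{r+1}n=3^r\cdot(3n)$, the host graph is precisely $K_{3^r\cdot(3n),\,3^r\cdot(3n)}$, so I would apply the induction hypothesis with $3n$ in place of $n$ and with $\diamond$ in the role of the $K_{n,n}$-colour. This yields one of two outcomes: either some colour $i\in[r]$ contains a copy of $P_{3n}$, which already contains a monochromatic $P_n$ and we are done; or colour $\diamond$ contains a copy of $K_{3n,3n}$.

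In the latter case I would restrict attention to that copy of $K_{3n,3n}$. By the definition of $\diamond$, every one of its edges is coloured either $r+1$ or $\star$, so it carries a $2$-colouring, and Lemma~\ref{Lemma_RPnKnn_2_colour} applies to it (with colour $r+1$ playing the role of the ``$P_n$'' colour and $\star$ the role of the ``$K_{n,n}$'' colour). Hence we find either a $P_n$ in colour $r+1$ or a $K_{n,n}$ in colour $\star$; in both cases we have produced the required monochromatic subgraph, completing the induction. There is no real obstacle here beyond bookkeeping the constant: exactly one factor of $3$ is ``spent'' at each step, matching the recursion $3^{r+1}=3\cdot 3^r$, which is what lets the induction hypothesis be invoked with parameter $3n$ on a host graph of the right size.
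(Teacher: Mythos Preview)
Your proof is correct and uses the same induction-on-$r$ strategy as the paper, with Lemma~\ref{Lemma_RPnKnn_2_colour} as the engine; the only difference is the order in which you peel off a colour. The paper first applies Lemma~\ref{Lemma_RPnKnn_2_colour} to strip off colour~$1$ (getting a $K_{3^{r-1}n,3^{r-1}n}$ in the remaining colours) and then invokes the induction hypothesis directly with parameter $n$, whereas you first invoke the induction hypothesis with inflated parameter $3n$ and then apply Lemma~\ref{Lemma_RPnKnn_2_colour} at the end --- both routes are equally valid and yield the same constant.
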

	\begin{proof}
		We proceed by induction on $r$. The initial case $r=1$ is exactly Lemma~\ref{Lemma_RPnKnn_2_colour}. Consider an $(r+1)$-coloured $K_{3^rn,3^rn}$. By Lemma~\ref{Lemma_RPnKnn_2_colour}, either there is a $P_n$ coloured $1$ or a $K_{3^{r-1}n,3^{r-1}n}$ coloured by $\{2, \dots, r+1\}$. In the former case, we are done immediately. In the latter case, we are done by induction.   
	\end{proof}

	We write $K_n^s$ for the complete $s$-partite graph with parts of size $n$.

	\begin{lemma}\label{Lemma_RPnKnm_multipartite}
		$K_{6^{rs}}\rightarrow \Big(\overbrace{P_n,\dots, P_n}^{\text{$r$ times}}, K_{n}^s\Big)$ for every integers $r \ge 1$ and $s \ge 2$.
	\end{lemma}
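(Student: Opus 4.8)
The plan is to prove the statement by induction on $s$, using Lemma~\ref{Lemma_RPnKnn_r_colour} to split off one part of the desired monochromatic $K_n^s$ at each step. (As in the previous two lemmas the size of the host clique scales with $n$, so what I will actually prove is $K_{6^{rs}n}\longrightarrow(P_n,\dots,P_n,K_n^s)$, with $r$ copies of $P_n$.)

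The base case $s=1$ is immediate, since $K_n^1$ is an independent set of size $n$ and $K_{6^{r}n}$ has at least $n$ vertices. For the inductive step, fix an $(r{+}1)$-edge-colouring of $K_{6^{r(s+1)}n}$ and observe that $6^{r(s+1)}n=2\cdot 3^{r}\cdot(6^{rs}n)$, so the vertex set splits into two parts $X,Y$ with $|X|=|Y|=3^{r}\cdot 6^{rs}n$. The bipartite graph between $X$ and $Y$ is an $(r{+}1)$-coloured $K_{3^{r}m,\,3^{r}m}$ with $m=6^{rs}n$, so by Lemma~\ref{Lemma_RPnKnn_r_colour} (applied with $m$ in place of $n$) it contains either a path on $m$ vertices in one of the first $r$ colours --- which in particular contains a monochromatic $P_n$, and we are done --- or a copy of $K_{m,m}$ in colour $r{+}1$, with parts $P\subseteq X$ and $Q\subseteq Y$.

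Assume the latter. Pick any $n$-subset $V_0\subseteq P$, and apply the induction hypothesis to the $(r{+}1)$-coloured clique on $Q$, which has exactly $m=6^{rs}n$ vertices: either there is a monochromatic $P_n$ in one of the first $r$ colours --- done --- or there is a colour-$(r{+}1)$ copy of $K_n^s$ on pairwise disjoint $n$-sets $V_1,\dots,V_s\subseteq Q$. Now $V_0\subseteq P\subseteq X$ while $V_1,\dots,V_s\subseteq Q\subseteq Y$, so $V_0,\dots,V_s$ are pairwise disjoint; every edge between two of $V_1,\dots,V_s$ has colour $r{+}1$; and every edge between $V_0$ and some $V_i$ lies in the colour-$(r{+}1)$ complete bipartite graph between $P$ and $Q$. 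Hence $V_0\cup\dots\cup V_s$ spans a colour-$(r{+}1)$ copy of $K_n^{s+1}$, completing the induction.

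The argument is essentially bookkeeping and I expect no real obstacle; the one thing to verify is the size identity $6^{r(s+1)}n=2\cdot 3^{r}\cdot 6^{rs}n$, which simultaneously makes the halving into $X,Y$ large enough to feed Lemma~\ref{Lemma_RPnKnn_r_colour} at parameter $6^{rs}n$ and leaves precisely $6^{rs}n$ vertices in $Q$ for the inductive call. The only inefficiency is that we extract a monochromatic path (or bipartite clique) on $m=6^{rs}n$ vertices when one on $n$ vertices would formally suffice, but this costs nothing in the final constant.
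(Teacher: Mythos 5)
Your proof is correct, modulo one small arithmetic slip: $6^{r(s+1)}n = 2\cdot 3^r\cdot 6^{rs}n$ only holds when $r=1$; in general $6^{r(s+1)}n = 6^r\cdot 6^{rs}n = 2^r 3^r\cdot 6^{rs}n \ge 2\cdot 3^r\cdot 6^{rs}n$, which is an inequality, not an identity. Since the arrow is monotone in the host clique this changes nothing — you just throw away the excess vertices — but the claimed identity is false for $r\ge 2$.

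Beyond that, your argument takes a genuinely different route from the paper's. Both proofs begin the inductive step the same way, using Lemma~\ref{Lemma_RPnKnn_r_colour} to extract a large colour-$(r{+}1)$ complete bipartite graph with parts $P$ and $Q$. The paper then applies the induction hypothesis to \emph{both} parts, obtaining two colour-$(r{+}1)$ copies of $K_n^{s-1}$, one in each part; since all edges between the parts are colour $r{+}1$ this yields a $K_n^{2s-2}$, and the lemma is closed by observing $2s-2\ge s$ for $s\ge 2$. You instead apply the induction hypothesis to only one part ($Q$), obtain a $K_n^{s}$ there, and upgrade it to $K_n^{s+1}$ by taking any $n$-subset $V_0$ of the other part $P$ as the extra colour class, since all $V_0$--$Q$ edges have colour $r{+}1$ by construction. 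Your decomposition is more economical (one inductive call rather than two, and no need for the ``$2s-2\ge s$'' slack), and it lets the induction start cleanly at $s=1$ with the trivial independent-set base case rather than the paper's $s=2$ base. The constants you get are somewhat better, though neither paper nor student is optimizing them.
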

	\begin{proof}
		Fix $r \ge 1$ and write $a_s = (2 \cdot 3^r)^{s-1}$.
		We will prove that the following holds for $s \ge 2$.
		\begin{equation*}
			K_{a_sn,\,a_sn} \longrightarrow \Big(\overbrace{P_n,\dots, P_n}^{\text{$r$ times}}, K_{n}^s\Big).
		\end{equation*}
		Notice that this will prove the lemma as $6^{rs} \ge a_s$ for every $s \ge 2$.
		We proceed by induction on $s$.
		The initial case $s=2$ follows from Lemma~\ref{Lemma_RPnKnn_r_colour} (because $a_2 = 2 \cdot 3^r$ and so $K_{a_2n}$ contains a copy of $K_{3^rn,3^rn}$). Suppose then that $s\geq 3$, and that the lemma holds for smaller $s$. Consider an $(r+1)$-coloured $K_{a_sn}$, and suppose that there is no monochromatic $P_n$ with a colour in $[r]$. By Lemma~\ref{Lemma_RPnKnn_r_colour}, there is a copy of $K_{a_{s-1}n, a_{s-1}n}$ coloured $r+1$ (because $a_s = 2 \cdot 3^r \cdot a_{s-1}$). Let $A,B$ be the parts of this $K_{a_{s-1}n,a_{s-1}n}$. By induction applied to $K_n[A]$ and $K_n[B]$, the parts $A$ and $B$ each either contain a monochromatic $P_n$ with a colour in $[r]$, in which case we are done, or a $K_{n}^{s-1}$ coloured $r+1$. The edges between these $K_n^{s-1}$'s are all colour $r+1$, giving a $K_{n}^{2s-2}$ in this colour (and this contains a colour $r+1$ copy of $K_{n}^{s}$, since $2s-2\geq s$ for $s\geq 3$).
	\end{proof}

	The following is a basic property of expanders.
	\begin{lemma}\label{Lemma_expander_rainbow_path}
		Let $S_1, \dots, S_r$ be disjoint sets of size at least $r\eps n$ in an $\eps$-expander $G$ on $n$ vertices. Then there is a path $v_1\dots v_r$ in $G$ with $v_i\in S_i$.
	\end{lemma}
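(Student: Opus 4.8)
The plan is to build the path greedily, extending it one vertex at a time while using the expansion property to ensure that a large portion of each $S_i$ remains ``reachable'' from the part constructed so far. If $r=1$ there is nothing to prove, since $|S_1|\ge r\eps n>0$ forces $S_1\neq\emptyset$; so assume $r\ge 2$. I would set $A_1:=S_1$ and then, recursively for $i=2,\dots,r$, let
\[
  A_i:=\{\, v\in S_i : v \text{ has a neighbour in } A_{i-1}\,\}.
\]
The heart of the argument is the claim that $|A_i|\ge \eps n$ for every $i\in[r]$.

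I would prove this by induction on $i$. The base case is immediate: $|A_1|=|S_1|\ge r\eps n\ge \eps n$. For the inductive step, assume $|A_{i-1}|\ge\eps n$ and look at $S_i\setminus A_i$, the set of vertices of $S_i$ with no neighbour in $A_{i-1}$. If $|S_i\setminus A_i|\ge\eps n$, then applying the $\eps$-expander property to the two sets $S_i\setminus A_i$ and $A_{i-1}$ (which are disjoint, since $S_i\cap S_{i-1}=\emptyset$, though disjointness is not even needed here) yields an edge between them, contradicting the definition of $A_i$. Hence $|S_i\setminus A_i|<\eps n$, and therefore $|A_i|=|S_i|-|S_i\setminus A_i|>r\eps n-\eps n=(r-1)\eps n\ge\eps n$, which completes the induction.

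Given the claim, $A_r\neq\emptyset$, so I pick $v_r\in A_r$; by construction $v_r$ has a neighbour $v_{r-1}\in A_{r-1}$, which has a neighbour $v_{r-2}\in A_{r-2}$, and so on, ending with a neighbour $v_1\in A_1=S_1$ of $v_2$. Then $v_1v_2\cdots v_r$ is a walk in $G$ with $v_i\in A_i\subseteq S_i$ for each $i$, and since the sets $S_i$ are pairwise disjoint the vertices $v_1,\dots,v_r$ are distinct, so this walk is in fact a path of the required form. I do not expect any real obstacle in this argument: the only point needing a little care is the quantitative bookkeeping — one loses an additive $\eps n$ at each of the $r-1$ steps, so the hypothesis $|S_i|\ge r\eps n$ is (comfortably) enough to keep every $A_i$ above the expansion threshold $\eps n$; in fact $|S_i|\ge 2\eps n$ would already suffice.
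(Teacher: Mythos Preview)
Your proof is correct. The overall strategy is the same as the paper's --- iterate through $S_1,\dots,S_r$, at each step using the $\eps$-expansion property to keep a large ``live'' subset, and then trace back a path --- but the mechanism you use is different and in fact a bit cleaner. The paper maintains a chain of \emph{matchings}: it first observes that any two disjoint sets $S,T$ in an $\eps$-expander admit a matching of size at least $\min\{|S|,|T|\}-\eps n$, then builds maximal matchings $M_i$ between $S_i\cap V(M_{i-1})$ and $S_{i+1}$, obtaining $|M_i|\ge r\eps n - i\eps n$ and hence $M_{r-1}\neq\emptyset$; a path is then read off by following touching edges in $M_{r-1},M_{r-2},\dots,M_1$. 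Your neighbourhood sets $A_i$ play the same role more directly, and because you only need $|A_{i-1}|\ge\eps n$ (rather than tracking a quantity that shrinks by $\eps n$ each step), your argument --- as you correctly observe --- works already under the weaker hypothesis $|S_i|\ge 2\eps n$, whereas the paper's matching bound genuinely consumes the full $r\eps n$.
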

	\begin{proof}
		Notice that between any two disjoint sets $S$ and $T$ in an $\eps$-expander there is a matching of size at least $\min\{|S|,|T|\}-\eps n$ (if $M$ is a maximal $S$-$T$ matching, then we must have $|S\setminus V(M)|=|T\setminus V(M)|\leq \eps n$ as otherwise $\eps$-expansion would give another $S$-$T$ edge from $S\setminus V(M)$ to $T\setminus V(M)$ disjoint from $M$). 
		Set $M_1$ to be a maximal $S_1$-$S_2$ matching and write $m = r\eps n$; so $|M_1| \ge m - \eps n$. Build matchings $M_2, \dots, M_{r-1}$ one by one by taking $M_i$ to be a maximal $(S_i \cap V(M_{i-1}))$-$S_{i+1}$ matching. By the first sentence of the proof (applied with $S = S_i \cap V(M_{i-1})$ and $T = S_{i+1}$), we have 
		\begin{equation*}
			|M_i| \ge \min\{|S_i \cap V(M_{i-1})|, |S_{i+1}|\} - \eps n \ge \min\{|M_{i-1}|, m\} - \eps n \ge m - i\eps n,
		\end{equation*}
		where the last inequality follows from iterating the inequality $|M_i| \ge \min\{|M_{i-1}|,m\} - \eps n$ and using $|M_1| \ge m - \eps n$.
		In particular, $|M_{r-1}| \ge m - (r-1)\eps n > 0$, i.e.\ $M_{r-1} \neq \emptyset$. Also, by construction, each edge of $M_i$ touches an edge of $M_{i-1}$, for every $i \in [r-1]$. These two facts together give a sequence $v_1\dots v_r$ with $v_{i-1}v_i\in M_i$. This is thus a path satisfying the lemma. 
	\end{proof}

	We use $N_G(X)$ to denote the neighbourhood of $X$, namely the set of vertices outside of $X$ with an edge in $G$ into $X$, and omit the subscript $G$ when it is clear from context. 
	\begin{lemma}\label{Lemma_expansion_boosting}
		Let $G$ be a $0.01$-expander of order $n$. Then there is subgraph $H$ of order at least $0.97n$ with $H^{k}$ a $\frac1{5\cdot 2^{k/2-2}}$-expander.
	\end{lemma}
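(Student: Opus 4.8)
The plan is to first pass to a large induced subgraph $H$ of $G$ in which \emph{every} small vertex set already has a large neighbourhood, and then to use this to show that balls in $H$ grow geometrically; consequently any two sets of size $\varepsilon_k|V(H)|$, where $\varepsilon_k=\tfrac1{5\cdot 2^{k/2-2}}$, will be joined in $H$ by a path of length at most $k$, i.e.\ by an edge of $H^k$.

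\textbf{Step 1 (cleaning).} Call a set $S\subseteq V(G)$ \emph{deficient} if $0<|S|\le 0.49n$ and $|N_G(S)|<|S|$. The $0.01$-expansion of $G$ forces $|S|<0.01n$ for every deficient $S$: since $|N_G(S)\cup S|<2|S|\le 0.98n$, the set $V(G)\setminus(N_G(S)\cup S)$ has more than $0.02n$ vertices and sends no edge to $S$, so by expansion $|S|<0.01n$. Let $W$ be a deficient set of maximum size, or $W=\emptyset$ if there is none; then $|W|<0.01n$. Put $H=G-W$, so $|V(H)|>0.99n>0.97n$. I claim that every nonempty $S\subseteq V(H)$ with $|S|\le 0.48n$ satisfies $|N_H(S)|\ge|S|$. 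This is clear if $W=\emptyset$ (such an $S$ would be a deficient set of $G$). Otherwise suppose $|N_H(S)|<|S|$; a vertex outside $S\cup W$ with a neighbour in $S\cup W$ has a neighbour in $S$ (so lies in $N_G(S)\setminus W=N_H(S)$) or in $W$ (so lies in $N_G(W)$), whence $|N_G(S\cup W)|\le|N_H(S)|+|N_G(W)|<|S|+|W|=|S\cup W|$, so $S\cup W$ is deficient and strictly larger than $W$, contradicting maximality. Finally, being an induced subgraph of $G$, the graph $H$ still has the property that any two subsets of $V(H)$ of size at least $0.01n$ span an edge.

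\textbf{Step 2 (ball growth).} Write $B_j(X)$ for the set of vertices of $H$ within distance $j$ of $X$. Since $|N_H(S)|\ge|S|$ whenever $0<|S|\le 0.48n$, an immediate induction on $j$ gives $|B_j(X)|\ge\min(2^j|X|,0.48n)$ for every nonempty $X\subseteq V(H)$. Moreover, once a ball reaches size $0.01n$ it blows up: if $|B_{j-1}(X)|\ge 0.01n$ then $V(H)\setminus B_j(X)$ sends no edge to $B_{j-1}(X)$, hence has fewer than $0.01n$ vertices, so $|B_j(X)|>|V(H)|-0.01n>0.98n$. Combining these two facts, $2^{j-1}|X|\ge 0.01n$ implies $|B_j(X)|>0.98n$. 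Now take $A,B\subseteq V(H)$ with $|A|,|B|\ge\varepsilon_k m$, where $m=|V(H)|$. A direct computation gives, for $k\ge 2$, that $2^{\lfloor k/2\rfloor-1}\varepsilon_k m=\tfrac15\cdot 2^{\lfloor k/2\rfloor-k/2+1}m\ge\tfrac{\sqrt2}{5}m>0.01n$ (using $m>0.99n$), while for $k=1$ one has $|B_{\lfloor k/2\rfloor}(A)|=|A|\ge\varepsilon_1 m>m/2$ outright. Hence $|B_{\lfloor k/2\rfloor}(A)|>m/2$ and likewise $|B_{\lceil k/2\rceil}(B)|>m/2$, so these two subsets of the $m$-element set $V(H)$ share a vertex $v$; this yields $a\in A$ and $b\in B$ with $d_H(a,v)\le\lfloor k/2\rfloor$ and $d_H(v,b)\le\lceil k/2\rceil$, so $d_H(a,b)\le k$ and $ab$ is an edge of $H^k$ between $A$ and $B$. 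Thus $H^k$ is a $\tfrac1{5\cdot 2^{k/2-2}}$-expander.

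\textbf{Where the difficulty lies.} The only genuinely delicate point is Step 1: one needs a deletion set $W$ that is simultaneously small enough to keep $|V(H)|$ above $0.97n$ \emph{and} chosen so that no small poorly-expanding set survives in $H=G-W$. Naively deleting deficient sets one at a time would make the $0.01$ constant deteriorate; taking $W$ to be a \emph{maximum}-size deficient set avoids this, precisely because for any would-be bad set $S$ in $H$ the union $S\cup W$ is a strictly larger deficient set of $G$. Everything afterwards --- geometric ball growth, the blow-up once a ball has $0.01n$ vertices, and the arithmetic matching $\varepsilon_k$ to $\lfloor k/2\rfloor$ doubling steps --- is routine bookkeeping.
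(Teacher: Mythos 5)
Your proof is correct and follows the same overall strategy as the paper: pass to an induced subgraph $H$ in which small sets expand, then use geometric growth of balls to show that two sets of size $\varepsilon_k|V(H)|$ are connected by a path of length at most $k$. The one place where you genuinely deviate is the cleaning step: the paper takes $X$ maximal with $|N(X)|\le 2|X|$ and deletes $X\cup N(X)$ (the deletion of $N(X)$ is what makes the inequality $|N(X\cup Y)|\le |N(X)|+|N_H(Y)|$ clean), which yields the stronger guarantee $|N_H(Y)|>2|Y|$; you instead delete only the maximum-size set $W$ with $|N(W)|<|W|$ and argue via $S\cup W$, obtaining the weaker guarantee $|N_H(S)|\ge |S|$. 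Both give ball-doubling, which is all that is needed. The endgame is also superficially different — you grow balls around $A$ and around $B$ until each covers more than half of $V(H)$ and intersect them, while the paper grows them to size $n/5$ and invokes $0.01$-expansion of $G$ to bridge the gap — but these are equivalent in substance. As with the paper's own proof, a pedantic reader would note that when $A$ and $B$ overlap the argument technically produces a pair $a\in A$, $b\in B$ with $d_H(a,b)\le k$ that could satisfy $a=b$; this is a standard technicality shared by both proofs and not a flaw specific to yours.
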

	\begin{proof}
		Set $r=\floor{k/2-1}$ and $\eps=\frac1{5\cdot 2^{k/2-2}}\ge\frac1{5\cdot 2^r}$.
		Let $X\subseteq V(G)$ be a maximal subset with $|X|\leq n/4$ and $|N(X)|\leq 2|X|$. Then $|X|\leq n/100$, since otherwise $|X \cup N(X)|\geq 99n/100$ (if not, we would have $|V(G)\setminus (X \cup N(X))|, |X|\geq 0.01n$. Then the definition of $0.01$-expander applies to give an edge from $X$ to $V(G)\setminus (X \cup N(X))$, which is impossible) so $|N(X)|\geq 99n/100 - |X|\geq 74n/100>2|X|$, a contradiction. Set $H:=G \setminus (X\cup N(X))$ (i.e.\ $G$ with the vertices in $X\cup N(X)$ deleted) to get a graph of order at least $n-3|X|\geq 97n/100$. Note that all $Y\subseteq V(H)$ with $|Y|\leq n/5$ have $|N_H(Y)|>2|Y|$ (since otherwise $X\cup Y$ is a set with $|N(X\cup Y)|\leq 2|X\cup Y|$ and $|X\cup Y|\leq n/100+n/5\leq n/4$, contradicting maximality of $X$).

		Now consider $A,B\subseteq V(H^r)=V(H)$ of size $\eps n$. 
		Write $A_i$ for the set of vertices at distance at most $i$ from $A$ in $H$, and define $B_i$ analogously with respect to $B$. 
		Then $|A_i| \geq |N(A_{i-1})| \ge \min\{2|A_{i-1}|,n/5\}$ for all $i$, giving $|A_r|\geq\min(2^r\eps n, n/5) = n/5$. Similarly, $|B_r|\geq n/5$. Using $0.01$-expansion, this gives an edge between $A_r$ and $B_r$ in $G$ (and hence also in $H$) i.e.\ $d_H(A,B)\leq 2r+1$. Thus there is an $A$-$B$ edge in $H^{2r+1} \subseteq H^k$, as required.
	\end{proof}

	The following is a Ramsey property of expanders.
	\begin{lemma}\label{Lemma_expander_ramsey_one_Kn}
		Let $c\gg d$ ($c\geq 10\cdot 6^{d^2}$ works). Let $G$ be a $0.01$-expander  on $n$ vertices. Then  
		\begin{equation*}
			G^c\rightarrow \Big(\overbrace{P_{n/c}, \dots, P_{n/c}}^{\text{$d$ times}}, K_d\Big).
		\end{equation*}
	\end{lemma}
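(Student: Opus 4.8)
The plan is to combine the expansion-boosting lemma (Lemma~\ref{Lemma_expansion_boosting}) with the multipartite Ramsey result (Lemma~\ref{Lemma_RPnKnm_multipartite}) and the rainbow-path lemma (Lemma~\ref{Lemma_expander_rainbow_path}), run on a suitable sub-power of $G^c$. Concretely, I would first apply Lemma~\ref{Lemma_expansion_boosting} to the $0.01$-expander $G$ to extract a subgraph $H$ on at least $0.97n$ vertices such that $H^{k}$ is an $\eps_k$-expander for $\eps_k = \tfrac{1}{5\cdot 2^{k/2-2}}$; I will choose $k$ to be a (small) constant, independent of $n$, large enough that $\eps_k$ is smaller than some absolute threshold needed below, and I will absorb a further factor into $c$ so that $H^k$ is a sub-hypergraph of $G^c$ (taking $c$ a large constant multiple of $k$ suffices, and the hypothesis $c\geq 10\cdot 6^{d^2}$ is there precisely to leave room for both $k$ and the multipartite Ramsey blow-up factor $6^{d\cdot d}$ appearing below). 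From now on I work inside $H^k$, an expander of linear order with bounded degree.

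Next, given an arbitrary $(d+1)$-colouring of $G^c$ (hence of $H^k$), I partition $V(H)$ into $\Theta(1)$ blocks $B_1,\dots,B_m$, where $m = 6^{d\cdot d} \cdot c/n \cdot$ (something) — more precisely I fix $m$ to be a constant of size roughly $6^{d^2}$, so each block has size $\approx n/m = \Omega(n)$, and $n/m \geq n/c$. Because $H^k$ is an $\eps_k$-expander with $\eps_k$ tiny compared to $1/m$, any two blocks $B_i, B_j$ are joined by an edge of $H^k$ (in fact by a large matching), so the "reduced graph" on $\{B_1,\dots,B_m\}$ is complete. I then colour each pair $\{B_i,B_j\}$ by the majority colour appearing on $H^k$-edges between $B_i$ and $B_j$; this is a $(d+1)$-colouring of $K_m$. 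Since $m \geq 6^{d\cdot d}$, Lemma~\ref{Lemma_RPnKnm_multipartite} (applied with the first $d$ colours and parameter, say, $q := n/c$ rounded down, noting $m = 6^{d\cdot d}\gg 6^{dq'}$ needs $q'$ constant — so I instead apply it with its vertex count $6^{rs}$ taken to be $m$ and conclude a structure of constant size) yields either a monochromatic path $P_d$ in the reduced graph in one of the first $d$ colours, or a monochromatic $K_d^d$ (complete $d$-partite graph with parts of size $d$) in colour $d+1$.

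In the first case, a reduced path $B_{i_1}\cdots B_{i_d}$ monochromatic in colour $j\le d$ means each consecutive pair has a large colour-$j$ matching between the blocks; I then build a genuine colour-$j$ path of length $\Omega(n)\geq n/c$ inside $\bigcup B_{i_\ell}$ by a standard connecting argument — walk through the blocks using these matchings, which is exactly the mechanism of Lemma~\ref{Lemma_expander_rainbow_path} applied blockwise, yielding far more than $n/c$ vertices (here I must be careful to choose $m$ and the block sizes so that even a $P_d$ in the reduced graph delivers $\geq n/c$ real vertices: using that each block has $\Omega(n)$ vertices and consecutive blocks have matchings of size $\Omega(n)$ minus an $\eps_k n$ error, a reduced path on just a constant number of blocks already yields a path on a linear number of vertices). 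In the second case, the colour-$(d+1)$ copy of $K_d^d$ in the reduced graph gives $d$ disjoint blocks pairwise joined by large colour-$(d+1)$ matchings; picking one vertex from each block greedily along these matchings produces $d$ vertices spanning a monochromatic $K_d$ in colour $d+1$ in $H^k\subseteq G^c$ — this is again an instance of Lemma~\ref{Lemma_expander_rainbow_path}, or just iterated expansion.

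The main obstacle, and the step needing the most care, is the bookkeeping that makes the constants line up: I must pick $k$ large enough that $\eps_k < 1/(\text{poly}(d,m))$ so that all the relevant block-pairs expand and all the matchings are nonempty after subtracting the accumulated $\eps_k n$ losses, then pick $m\asymp 6^{d^2}$ so Lemma~\ref{Lemma_RPnKnm_multipartite} applies, and finally verify $ck \leq c$-worth of distance and $m \cdot (n/c\text{-type bound})$ so that $c\geq 10\cdot 6^{d^2}$ indeed suffices and the extracted monochromatic path has at least $n/c$ vertices. None of these are deep, but getting a single clean choice of $k$ and $m$ that simultaneously satisfies the expansion requirement, the Ramsey requirement, and the length requirement is the fiddly heart of the argument.
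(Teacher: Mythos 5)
Your overall scaffolding is right — first boost the expansion via Lemma~\ref{Lemma_expansion_boosting}, then apply the multipartite Ramsey result, then lift a constant-size structure via Lemma~\ref{Lemma_expander_rainbow_path} — but you diverge from the paper in a way that opens a genuine gap. You introduce a \emph{constant}-size reduced graph on blocks $B_1,\dots,B_m$ (majority colouring, $m\approx 6^{d^2}$) and apply Lemma~\ref{Lemma_RPnKnm_multipartite} to the $(d+1)$-coloured $K_m$. By your own observation, this forces the Ramsey parameter $q$ to be a constant, so the outcome in the path case is a \emph{constant-length} reduced path $B_{i_1}\cdots B_{i_d}$. You then assert that the matchings between consecutive blocks "yield far more than $n/c$ vertices''. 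This is false: a matching between $B_{i_\ell}$ and $B_{i_{\ell+1}}$ contributes only one traversal per vertex, and since matchings cannot be re-entered, a reduced path on $d$ blocks produces a genuine path on at most $d$ vertices, not $\Omega(n)$. There is no zigzag: each vertex of $B_{i_\ell}$ has a unique matched partner in $B_{i_{\ell+1}}$, so once you step forward you cannot step back. In fact Lemma~\ref{Lemma_expander_rainbow_path}, which you invoke, is explicitly a statement about \emph{constant}-length transversal paths (one vertex per set) and cannot deliver a linear path. So the first case of your dichotomy does not close.

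The paper sidesteps this by \emph{not} reducing: it applies Lemma~\ref{Lemma_RPnKnm_multipartite} directly to a $(d+2)$-colouring of the complete graph on the $0.97n$ vertices of $H$, using an extra colour $0$ for non-edges of $H^c$ and bundling $\{0,d+1\}$ into a single colour. Because the host complete graph is of \emph{linear} size, the monochromatic path produced by the Ramsey lemma is already of linear order $0.97n/6^{d^2}\geq n/c$ — no blow-up step is needed. Only the $K_d$ case requires lifting, and there Lemma~\ref{Lemma_expander_rainbow_path} is the right tool: the $K^d_m$ of colour $\{0,d+1\}$ gives $d$ disjoint linear-size sets, and the transversal path $v_1\cdots v_d$ in $H$ (obtained from the rainbow-path lemma) spans a colour-$(d+1)$ clique in $H^c$ since all cross-part pairs have colour $0$ or $d+1$ and colour $0$ corresponds to non-edges. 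To repair your proof, drop the reduced graph entirely and run the Ramsey lemma at full scale on $K_{0.97n}$ with the extra "non-edge'' colour; the path case then comes for free and your clique-lifting step is unchanged.
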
 
	\begin{proof}
		Write $\eps = \frac{1}{5 \cdot 2^{c/2-2}}$. Pick $c$ so that $0.97/6^{d^2}\geq \max\{d\eps,1/c\}$. 
		Let $G^c$ be $(d+1)$-coloured.
		Apply Lemma~\ref{Lemma_expansion_boosting} to find a subgraph $H \subseteq G$ of order $0.97n$ with $H^c$  an $\eps$-expander. 
		Think of this as an $(d+2)$-colouring of $K_{0.97n}$ with non-edges of $H^c$ having colour $0$. Apply Lemma~\ref{Lemma_RPnKnm_multipartite} with $r=s=d$, thinking of $\{0, d+1\}$ as a single colour. This either gives a monochromatic path of length $m:=0.97n/{6^{d^2}}$ coloured by $[d]$ or a $\{0,d+1\}$-coloured $K_{m}^d$. In the former case, we are done (since $m\geq n/c$). In the latter case, let $S_1, \dots, S_d$ be the parts of the $\{0,d+1\}$-coloured $K_m^d$. Using $m\geq d\eps n$, apply Lemma~\ref{Lemma_expander_rainbow_path} to get a path $v_1\dots v_d$ in $H$ with $v_i\in S_i$. Then $H^c[\{v_1, \dots, v_d\}]$ is a $K_d$ coloured $d+1$ (it is complete because the vertices are within distance $d\leq c$ in $H$. It is colour $d+1$ because all its edges go between different parts $S_i, S_j$, and the only colour permitted there is $d+1$).
	\end{proof}

	The following is the main result of this section. 
	\begin{lemma}[Ramsey of expander powers]\label{Lemma_expander_ramsey_MAIN}
		Let $c\gg d, \eps^{-1}$, and $d\geq s$. Let $G$ be an $\eps$-expander on $n$ vertices. Then for every $U\subseteq V(G)$ with $|U|\geq n/2$, we have
		\begin{equation*}
			G[U]^c\rightarrow \Big(\overbrace{P_{n/c},\, \dots,\, P_{n/c}}^{\text{$s$ times}}, \overbrace{K_d \, \dot\cup\, \dots\, \dot \cup\, K_d}^{\text{covering at least $(1-200\eps)|U|$ vertices}} \Big).
		\end{equation*}
	\end{lemma}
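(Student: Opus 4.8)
The plan is to bootstrap from the single–clique statement, Lemma~\ref{Lemma_expander_ramsey_one_Kn}, by greedily peeling off cliques one at a time. Fix an $(s+1)$-colouring of $G[U]^c$. The first thing I would do is dispose of the colour bookkeeping: since $d \ge s$, we may view this as a $(d+1)$-colouring in which colours $1,\dots,s$ are the ``path colours'', colour $d+1$ is the original colour $s+1$ (playing the role of the ``clique colour'' in Lemma~\ref{Lemma_expander_ramsey_one_Kn}), and colours $s+1,\dots,d$ simply never occur. I also set $c_0 := 10\cdot 6^{d^2}$, the constant for which Lemma~\ref{Lemma_expander_ramsey_one_Kn} applies, and take $c$ large enough (as a function of $d$ and $\eps^{-1}$) that $c \ge c_0$ and $c \ge c_0/(100\eps)$; the latter is the crucial inequality. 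Note $c_0$ depends only on $d$.

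Next I run the following greedy process. I maintain a set $W \subseteq U$ of ``uncovered'' vertices, initially $W = U$, together with a family $\mathcal K$ of pairwise vertex-disjoint $K_d$'s of colour $s+1$, initially empty. As long as $|W| > 200\eps|U|$ I do the following. Since $|U| \ge n/2$ we have $|W| > 200\eps|U| \ge 100\eps n$, and therefore $G[W]$ is a $0.01$-expander: any two subsets of $W$ of size at least $0.01|W| > \eps n$ send an edge between them by $\eps$-expansion of $G$. Applying Lemma~\ref{Lemma_expander_ramsey_one_Kn} to $G[W]$ with the restricted colouring yields either (a) a monochromatic path on at least $|W|/c_0$ vertices whose colour lies in $[d]$, hence in $[s]$; or (b) a monochromatic $K_d$ of colour $d+1$, i.e.\ of colour $s+1$. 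In case (a), $|W|/c_0 > 100\eps n/c_0 \ge n/c$, so the path contains $P_{n/c}$, and since $G[W]^{c_0} \subseteq G[U]^{c_0} \subseteq G[U]^c$ this path lives in $G[U]^c$ and we are done via the first alternative of the arrow. In case (b) I add the $K_d$ to $\mathcal K$, delete its $d$ vertices from $W$, and iterate.

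If the process never produces a long path, it terminates with $|W| \le 200\eps|U|$. At that point $\mathcal K$ consists of pairwise vertex-disjoint $K_d$'s, all of colour $s+1$, each lying in $G[W']^{c_0} \subseteq G[U]^c$ for the relevant intermediate set $W'$, and together they cover $|U|-|W| \ge (1-200\eps)|U|$ vertices of $U$; this is exactly the second alternative of the arrow.

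The part that needs care — and the reason the statement only asks to cover a $(1-200\eps)$-fraction rather than all of $U$ — is controlling the expansion of $G[W]$ as $W$ shrinks: once $|W|$ drops below roughly $100\eps n$ the graph $G[W]$ need no longer be a $0.01$-expander, so Lemma~\ref{Lemma_expander_ramsey_one_Kn} is unavailable and the peeling must stop there. The other point to get right is the interplay of the two constants: we are forced to invoke Lemma~\ref{Lemma_expander_ramsey_one_Kn} with the modest constant $c_0 = 10\cdot 6^{d^2}$, yet we still need it to produce paths on $n/c$ vertices even from sets $W$ of size only $\Theta(\eps n)$, which is precisely what forces $c \gg c_0/\eps$, i.e.\ $c$ large in terms of both $d$ and $\eps^{-1}$.
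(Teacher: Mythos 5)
Your proof is correct and takes essentially the same approach as the paper: the paper considers a maximum collection of pairwise vertex-disjoint colour-$(s{+}1)$ copies of $K_d$ and, if it covers too few vertices, applies Lemma~\ref{Lemma_expander_ramsey_one_Kn} to the leftover to derive a contradiction (either a new $K_d$ violating maximality, or a long path), whereas you unroll this into a greedy peeling process — but the two are interchangeable, use the same key ingredient and the same choice $c = c_0/(100\eps)$, and the expansion check on the residual set is identical.
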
 
	\begin{proof}
		Without loss of generality, $s=d$ (if $s<d$, just think of any $s$-colouring as a $d$-colouring, with some of the colours not appearing). 
		Let $c=c'/(100\eps)$ where $c'$ is the constant from Lemma~\ref{Lemma_expander_ramsey_one_Kn}.
		Consider a maximum collection of pairwise vertex-disjoint $K_d$'s in $U$ coloured $s+1$, which we denote by $K^1, \dots, K^m$. Suppose, towards contradiction, that $m\leq (1-200\eps)|U|/d$, and define $H=G[U]\setminus V(K^1\cup \dots \cup K^m)$ to get an induced subgraph $H$ of size at least $200\eps |U|\geq 100\eps n$, noting that $H$ is a $0.01$-expander. By Lemma~\ref{Lemma_expander_ramsey_one_Kn} we either get a colour $1$ copy of $K_d$ (contradicting the maximality of our collection), or we get a monochromatic path in one of the other colours of order at least $100\eps n/c' = n/c$, as required. 
	\end{proof}

\section{Ordered forests}\label{Section_tree_assignments}
	\def \Fm {F^-}

	\subsection{Basic definitions} \label{subsec:tree-intro}
		In the paper we work exclusively with trees that are both rooted and ordered. 

		Here ``rooted'' means that the tree has a designated vertex called the \emph{root} and all edges are directed away from the root. A \emph{rooted forest} is a forest whose components are rooted trees.
		A \emph{leaf} in a rooted forest is defined as a vertex with out-degree $0$. The \emph{level} of a vertex is the distance it has to the root in its component. 
		The \emph{height} of a forest is the maximum level of a vertex in the forest. We emphasise that the root of a tree of height at least $1$ is never a leaf (even if it has degree $1$), whereas in a height $0$ tree the single vertex it contains is both a root and a leaf. 
		 A forest is \emph{balanced} if all its leaves are on the same level. We remark that, aside from the beginning of Section~\ref{Section_versatile}, all forests everywhere will be balanced. The out-neighbours of a vertex $v$ are called \emph{children of $v$}, and the in-neighbour is called the \emph{parent of $v$}. Vertices to which there is a directed path from $v$ are called \emph{descendants of $v$}, while vertices from which there is a directed path to $v$ are called \emph{ancestors of $v$}. We remark that $v$ is both a descendent and ancestor of itself. We call two vertices \emph{comparable} if one is a descendent of the other. 
		A $d$-ary forest is a rooted forest where all non-leaf vertices have out-degree $d$ (and in-degree $1$, aside from the root which has in-degree $0$). 

		Ordered trees are defined as follows. 

		\begin{definition}[Ordered trees, forests, tree families]\
			An \emph{ordered forest} is a rooted forest $F$ (i.e.\ one in which every tree of $F$ is rooted), together with an ordering of $V(F)$ such that 
			\begin{enumerate}[label = \rm(F\arabic*)]
				\item \label{itm:ordered-a} 
					If $v'$ is a descendant of $v$ then $v<v'$.
				\item \label{itm:ordered-b}
					For every incomparable $u$, $v$ (namely, $u$ is neither an ancestor nor a predecessor of $v$), with $u<v$, all descendants $u'$ of $u$ and $v'$ of $v$ have $u'<v'$.
			\end{enumerate}
		\end{definition}

		Throughout the paper all trees/forests will be ordered and so we sometimes suppress the word ``ordered'' for clarity.
		The \emph{root set} of $F$, denoted $\root(F)$, is the (ordered) set of roots of $F$. We say ``$F$ is rooted on $V$'' if $\root(F)=V$.
		For ordered forests $F,F'$, we write $F' \leq F$ if $F'$ is an ordered subforest of $F$ with the same height and root set. 

		\begin{notation} \label{not:ordered-tree}
			There is a natural way of identifying  height $h$ ordered forests with subsets of $\mathbb{N}^{h}$: we identify vertices on level $i$ with vectors of the form $(x_{0}, \dots, x_i, 0, \dots, 0)$ and do this level by level. Given the roots $r_1<r_1<\dots r_q$, we will label a root $r_j$ with a vector of form $(j, 0, \dots, 0)$. For levels $i\geq 1$, if a vertex at level $i-1$, $v=(x_0, \dots, x_i, 0, \dots, 0)$ has children $v_1, \dots, v_k$ ordered  $v_1< \dots< v_k$, then we identify $v_i$ with $(x_0, \dots, x_i, i, \dots, 0)$ (see \Cref{fig:ordered-tree}). Note that the ordering of the vertices is exactly the lexicographic ordering on corresponding vectors (i.e.\ $(a_1, \dots, a_h)<(b_1, \dots, b_h)$ if the smallest coordinate with $a_t\neq b_t$ has $a_t<b_t$).
		\end{notation}

		\begin{figure}[h]
			\centering
			\includegraphics[scale = 1]{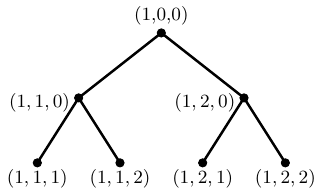}
			\caption{A height 3 ordered tree with the natural labelling in $\N^3$}
			\label{fig:ordered-tree}
		\end{figure}
		\begin{definition} [$L(F)$ and $A^F(S)$]
			Given an ordered forest $F$, we write $L(F)$ for the (ordered) set of leaves of $F$ (i.e.\ vertices with out-degree $0$). 

			Given a set of vertices $S\subseteq V(F)$, we let $A^{F}(S)$ be the induced subforest of $F$ consisting of all ancestors of vertices of $S$ (see \Cref{fig:A-L}).  
		\end{definition}

		\begin{figure}[h]
			\centering
			\includegraphics[scale = .8]{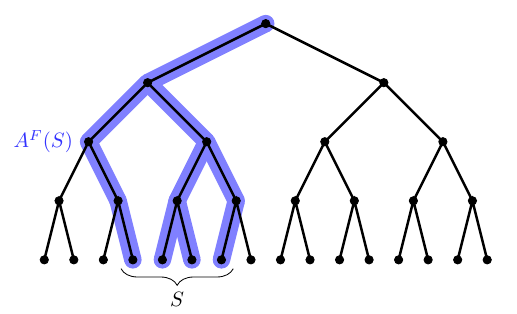}
			\vspace{-.3cm}
			\caption{A set of leaves $S$ and the corresponding $A^F(S)$}
			\label{fig:A-L}
		\end{figure}

		Most of the time it will be clear what $F$ is from context and we abbreviate $A(S)=A^F(S)$.
		It is useful to note that the above two definitions are inverses to each other, in the sense that for any set of leaves $S$ in a forest $F$  we have $L\left(A^F(S)\right)=S$ and for any $F'\leq F$, we have $A^F(L(F'))=F'$ (for this we use that $F$ and $F'$ have the same root set).

		It is also useful to note that $A^F(A^F(S))=A^F(S)$ (since ``is an ancestor of'' is a transitive relation). A consequence of this is that every root of $A^F(S)$ is a root of $F$ (let $x$ be a root of $A^F(S)$. If it is not a root of $F$, then it has an ancestor $y$ in $F\setminus A^F(S)$. But then $y$ is in $A^F(A^F(S))$, but not in $A^F(S)$).

		We use the following notions of homomorphism/monomorphism/isomorphism.

		\begin{definition}[Homomorphisms, monomorphisms and isomorphisms]
			A \emph{homomorphism} $\phi$ of ordered forests, from $F$ to $F'$, denoted $\phi: F \to F'$, is a function from $V(F)$ to $V(F')$, which maps edges to edges and preserves order (i.e.\ for every $u,v\in V(F)$, if $uv\in E(F)$, then  $\phi(u)\phi(v)\in E(F')$, and if $u<v$, then $\phi(u)<\phi(v)$). 
			An injective homomorphism is called a \emph{monomorphism} and a bijective homomorphism is called an \emph{isomorphism}. 
		\end{definition}

		A \emph{copy} of $F$ in $F'$ is an image of $F$ in $F'$ under some monomorphism $\phi : F \to F'$.

		The following two lemmas show that levels/ancestors are preserved by monomorphisms.
		\begin{lemma}\label{Lemma_level_preserved_by_monomorphism}
			Let $F,F'$ be two balanced forests of the same height, and $\phi:F \to F'$ a monomorphism. For any $x\in V(F)$, the vertex $x$ is in the same level in $F$ as $\phi(x)$ is in $F'$.
		\end{lemma}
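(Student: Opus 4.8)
The plan is to prove Lemma~\ref{Lemma_level_preserved_by_monomorphism} by showing that a monomorphism between balanced forests of the same height must preserve the root set and the leaf set, and then conclude that it preserves all intermediate levels by an argument on paths from roots to leaves. The key structural fact I would rely on is that in a balanced forest of height $h$, a vertex is on level $i$ if and only if it lies on a directed path of length $h$ from a root to a leaf, and it is the $(i+1)$-st vertex along such a path. So if I can pin down where roots and leaves go, the levels of everything in between are forced.

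\textbf{Step 1: roots map to roots.} Let $x$ be a root of $F$; I claim $\phi(x)$ is a root of $F'$. Suppose not, so $\phi(x)$ has a parent $y \in V(F')$. Since $\phi$ is injective, $|V(F')| \geq |V(F)|$; but the sizes may differ, so I cannot use a counting argument directly and must instead use the order-preservation property \ref{itm:ordered-a}-\ref{itm:ordered-b}. A cleaner route: I would argue that $\phi$ sends the minimum of $V(F)$ to the minimum of the image, and combine this with the following observation --- if $\phi(x)$ had a parent $y$ then $y$ together with the directed path from $\phi(x)$ down to a leaf would form a path of length $h+1$ in $F'$, contradicting that $F'$ has height $h$. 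Indeed, take any leaf $\ell$ of $F$ that is a descendant of $x$; such an $\ell$ exists and the path from $x$ to $\ell$ has length exactly $h$ (since $F$ is balanced of height $h$, every root-to-leaf path has length $h$). Its $\phi$-image is a directed path of length $h$ in $F'$ from $\phi(x)$ to $\phi(\ell)$. If $\phi(x)$ were not a root, prepending its parent gives a directed path of length $h+1$ in $F'$, so $F'$ has height $\geq h+1$, a contradiction. Hence every root of $F$ maps to a root of $F'$.

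\textbf{Step 2: leaves map to leaves, and levels are preserved.} Symmetrically, if $\ell$ is a leaf of $F$ and $\phi(\ell)$ were not a leaf of $F'$, then $\phi(\ell)$ has a child $z$; appending $z$ to the image of a root-to-$\ell$ path gives a length-$(h+1)$ path in $F'$, again contradicting $\mathrm{height}(F') = h$. So $\phi$ maps leaves to leaves. Now fix arbitrary $x \in V(F)$ on level $i$. Choose a root $\rho$ that is an ancestor of $x$ and a leaf $\ell$ that is a descendant of $x$; the directed path $\rho = v_0, v_1, \dots, v_h = \ell$ through $x = v_i$ has $v_i$ at position $i$. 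Its image $\phi(v_0), \dots, \phi(v_h)$ is a directed path of length $h$ in $F'$ whose initial vertex $\phi(\rho)$ is a root (Step 1) and whose terminal vertex $\phi(\ell)$ is a leaf (Step 2). Since $F'$ is balanced of height $h$, this forces $\phi(v_j)$ to be on level $j$ for every $j$ --- in particular $\phi(x) = \phi(v_i)$ is on level $i$, as required.

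\textbf{The main obstacle} I anticipate is Step~1: ruling out that a root of $F$ acquires a parent under $\phi$. The height argument above resolves it cleanly, but one must be careful that ``height'' here means the maximum level, equivalently the maximum length of a root-to-leaf directed path, and that in a \emph{balanced} forest \emph{every} root-to-leaf path achieves this length --- this is exactly why the balancedness hypothesis is needed, and it is the ingredient that makes the ``prepend/append an extra vertex'' trick produce a genuine contradiction rather than merely a long path that might still fit. Everything else is a routine unwinding of the definitions of ancestor, descendant, level, and the fact that homomorphisms send directed paths to directed paths of the same length (injectivity guarantees the image path does not collapse).
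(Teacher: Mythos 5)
Your proof is correct and takes essentially the same approach as the paper: map a root-to-leaf path through $x$ via $\phi$ and use the height bound on $F'$ to pin down the levels. The paper's version is slightly tighter — it folds your Steps~1 and~2 into the single observation that a directed path of length $h$ in a height-$h$ forest must start at a root, and it explicitly justifies (via order preservation, property~\ref{itm:ordered-a}) why $\phi$ sends a parent to a parent, a fact your argument uses when calling the image a ``directed path'' but does not spell out.
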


		\begin{proof}
			Denote the height of $F$ and $F'$ by $h$. Notice that an edge $uv$ in $F$, with $u$ being the parent of $v$, is mapped to an edge $\phi(u)\phi(v)$, with $\phi(u)$ being the parent of $\phi(v)$, due to $\phi$ being order-preserving. Thus, if $v_0 \dots v_h$ is a path in $F$ with $\level(v_i) = i$, then $\phi(v_0) \dots \phi(v_h)$ is a path in $F'$ with $\phi(v_{i-1})$ being the parent of $\phi(v_i)$. Due to $F$ and $F'$ both being height $h$ trees, this implies that $\level(\phi(v_i)) = i = \level(v_i)$. Since every vertex in $F$ is in a path $v_0 \dots v_h$ with $v_i$ having level $i$, it follows that $\level(\phi(v)) = \level(v)$ for every vertex $v$ in $F$.
		\end{proof}

		\begin{lemma}\label{Lemma_A_preserved_by_monomorphisms}
			Let $\phi:F\to F'$ be a monomorphism between two balanced forests of the same height. For every $X\subseteq V(F)$, we have $\phi(A^F (X))=A^{F'}(\phi(X))$.
		\end{lemma}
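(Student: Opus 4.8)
The plan is to establish the two inclusions $\phi\big(A^F(X)\big)\subseteq A^{F'}\big(\phi(X)\big)$ and $A^{F'}\big(\phi(X)\big)\subseteq \phi\big(A^F(X)\big)$ separately. The two ingredients I will use are Lemma~\ref{Lemma_level_preserved_by_monomorphism} (monomorphisms preserve levels) and the elementary fact that in a rooted forest a vertex at level $\ell$ has, for each $j\le \ell$, a \emph{unique} ancestor at level $j$ (its ancestors form a path of length $\ell$ from the root of its component down to it). The case $X=\emptyset$ is trivial, so assume $X\neq\emptyset$.

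For the inclusion $\phi\big(A^F(X)\big)\subseteq A^{F'}\big(\phi(X)\big)$, I would take $v\in A^F(X)$, so $v$ is an ancestor of some $x\in X$, witnessed by a directed path $v=v_0,v_1,\dots,v_k=x$ in $F$ with $v_{i-1}$ the parent of $v_i$. Since $\phi$ maps edges to edges and preserves order, each $\phi(v_{i-1})\phi(v_i)$ is an edge of $F'$ with $\phi(v_{i-1})$ the parent of $\phi(v_i)$ (this is exactly the observation used at the start of the proof of Lemma~\ref{Lemma_level_preserved_by_monomorphism}). Hence $\phi(v)=\phi(v_0),\dots,\phi(v_k)=\phi(x)$ is a directed path in $F'$, so $\phi(v)$ is an ancestor of $\phi(x)\in\phi(X)$, i.e.\ $\phi(v)\in A^{F'}\big(\phi(X)\big)$.

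For the reverse inclusion, I would take $v'\in A^{F'}\big(\phi(X)\big)$, so $v'$ is an ancestor of $\phi(x)$ for some $x\in X$. Set $j=\level(v')$ and $\ell=\level(\phi(x))$, so $j\le\ell$; by Lemma~\ref{Lemma_level_preserved_by_monomorphism} we have $\level(x)=\ell$, so $x$ has a unique ancestor $u$ in $F$ at level $j$. Then $u\in A^F(X)$, and by the first inclusion $\phi(u)$ is an ancestor of $\phi(x)$; moreover $\level(\phi(u))=\level(u)=j$, again by Lemma~\ref{Lemma_level_preserved_by_monomorphism}. Thus $v'$ and $\phi(u)$ are both ancestors of $\phi(x)$ at level $j$, and uniqueness forces $v'=\phi(u)\in\phi\big(A^F(X)\big)$. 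The argument is routine; the only points requiring a little care are that the image of a directed path under $\phi$ is again a consistently oriented directed path (which uses order-preservation, not merely the edge-to-edge property) and that "the ancestor at level $j$" is well defined, which is what lets us identify $v'$ with $\phi(u)$. I do not anticipate a genuine obstacle.
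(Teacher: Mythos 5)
Your proof is correct and follows essentially the same route as the paper's: both directions use that $\phi$ maps a parent-to-child path onto a parent-to-child path (so ancestry is preserved forward), and the reverse inclusion is closed via Lemma~\ref{Lemma_level_preserved_by_monomorphism} together with the observation that a vertex at level $\ell$ has exactly one ancestor at each level $j \le \ell$. The paper phrases this last step as a counting argument (both $x$ and $\phi(x)$ have $\level(x)+1$ ancestors, so $\phi$ must biject them) rather than your "unique ancestor at level $j$" formulation, but these are the same idea.
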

		\begin{proof}
			We claim that ``$x$ is an ancestor of $y$'' if and only if ``$\phi(x)$ is an ancestor of $\phi(y)$''. Indeed, if $x$ an ancestor of $y$ then there is a path $x = v_0 \dots v_k = y$ where $v_{i-1}$ is the parent of $v_i$, for $i \in [k]$. But then $\phi(x) = \phi(v_0) \dots \phi(v_k) = y$ is a path in $F'$ where $\phi(v_{i-1})$ is the parent of $\phi(v_i)$, showing that $\phi(x)$ is an ancestor of $\phi(y)$. This shows the ``if'' direction. For the ``only if'' part, since by \Cref{Lemma_level_preserved_by_monomorphism} we have $\level(x) = \level(\phi(x))$, the number of ancestors of $x$ in $F$ is the same as the number of ancestors of $\phi(x)$ in $F'$ (and both equal $\level(x) + 1$).
			By definition of $A^F$, we have
			\begin{align*}
				V\left(A^{F'}(\phi(X))\right)
				& = \{y \in V(F') : \text{$y$ is an ancestor of $\phi(x)$ for some $x \in X$}\} \\
				& = \{\phi(y): \text{$y \in V(F)$ and $\phi(y)$ is an ancestor of $\phi(x)$ for some $x \in X$}\} \\
				& = \phi\left(\{y \in V(F): \text{$y$ is an ancestor of some $x \in X$}\}\right) = V\left(\phi\left(A^F(X)\right)\right),
			\end{align*}
			completing the proof.
		\end{proof}

		We can now introduce the crucial definition of the ``type'' of a subset.
		\begin{definition}[Type]
			For a forest $F$ and subset $e \subseteq L(F)$, the \emph{type} of $e$ in $F$, denoted $\type_{F}(e)$, is the isomorphism class of $A^F(e)$ (as an ordered forest). 

			Let $\types(h,r)$ be the set of different types of subsets of size at most $r$ inside ordered trees of height at most $h$. Note that $|\types(h,r)|$ is finite since there are only finitely many rooted trees of height at most $h$ with at most $r$ leaves. 
		\end{definition}

		For an ordered forest $F$, let $F^-$ denote $F$, but with all roots deleted (see \Cref{fig:F-min}). If $\tau$ is the type of $F$, then define $\tau^-$ to be the type of $F^-$ (noting that this depends only on the type $\tau$, and not on $F$).
		\begin{figure}[ht]
			\centering
			\includegraphics[scale = .8]{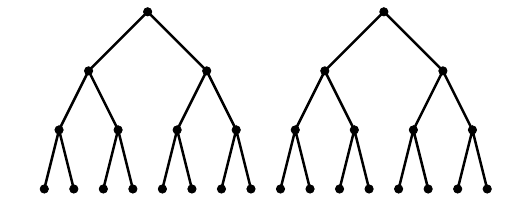}
			\hspace{1cm}
			\includegraphics[scale = .8]{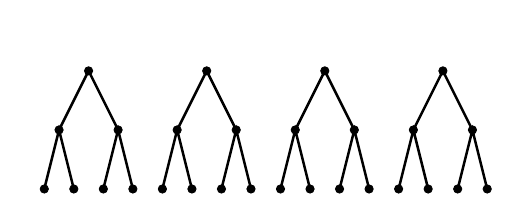}
			\caption{A forest $F$ (on the left) and the forest $F^-$ (on the right)}
			\label{fig:F-min}
		\end{figure}
		
		\begin{lemma}\label{Lemma_A_preserved_by_minus}
			For $X\subseteq L(F)$, we have $A^{F^-}(X)=(A^F(X))^-$.
		\end{lemma}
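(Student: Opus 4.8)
The plan is to show that $A^{F^-}(X)$ and $(A^F(X))^-$ are, respectively, the subforests of $F^-$ and of $F$ induced (with the vertex order inherited from $F$) on the same vertex set, namely
$$W := \{v \in V(F) : v \text{ is an ancestor in } F \text{ of some } x \in X, \text{ and } v \text{ is not a root of } F\}.$$
Since equality of induced subforests with the same ambient order is determined by the vertex set, this gives the lemma. Throughout we may assume $F$ has height at least $1$ (otherwise $F^-$ is empty and there is nothing to say): as $F$ is balanced, all its leaves lie on level $h \ge 1$ and so are not roots, whence $X \subseteq L(F) \subseteq V(F^-)$ and $A^{F^-}(X)$ is defined.

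First I would treat $(A^F(X))^-$. Recall from the discussion preceding the definition of type that every root of $A^F(X)$ is a root of $F$; conversely, any vertex of $A^F(X)$ that is a root of $F$ has no in-neighbour in $A^F(X)$, hence is a root of $A^F(X)$. So $\root(A^F(X)) = V(A^F(X)) \cap \root(F)$, and therefore $V\big((A^F(X))^-\big) = V(A^F(X)) \setminus \root(F) = W$. Moreover $(A^F(X))^-$ retains exactly those edges of $A^F(X)$ with both endpoints avoiding $\root(F)$; as $A^F(X)$ is an induced subforest of $F$, these are precisely the edges of $F$ with both endpoints in $W$. Hence $(A^F(X))^-$ is the subforest of $F$ induced on $W$.

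Next I would analyse $A^{F^-}(X)$. The key observation is that the ancestor relation of $F^-$ is the restriction to $V(F^-) = V(F) \setminus \root(F)$ of the ancestor relation of $F$. Indeed, a directed path in $F^-$ is a directed path of $F$ that avoids all roots of $F$; and a directed path of $F$ from $v$ to $y$ avoids all roots of $F$ if and only if $v$ is not a root of $F$, because no descendant of a non-root can be a root (a root is a descendant only of itself). Applying this with $y = x \in X$ (which is not a root), a vertex $v$ is an ancestor of $x$ in $F^-$ iff $v$ is a non-root ancestor of $x$ in $F$; hence $V\big(A^{F^-}(X)\big) = W$. Since $A^{F^-}(X)$ is induced in $F^-$, which is in turn an induced subforest of $F$, it too is the subforest of $F$ induced on $W$. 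Both forests carry the order inherited from $F$, so they coincide, proving the lemma.

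The only point requiring genuine care — the ``main obstacle,'' modest as it is here — is the verification that deleting the roots neither creates spurious ancestor relations nor destroys any surviving ones among the remaining vertices, i.e.\ that ancestry in $F^-$ is exactly ancestry in $F$ restricted to non-roots; the rest is bookkeeping with induced subforests and the inherited order.
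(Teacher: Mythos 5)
Your proof is correct and follows essentially the same approach as the paper's: both arguments reduce to checking that the two sides have the same vertex set, namely the set of non-root ancestors (in $F$) of vertices of $X$, using the observation that ancestry in $F^-$ agrees with ancestry in $F$ restricted to non-roots. Your version is slightly more explicit in spelling out why this suffices (both sides are induced subforests of $F$ with the inherited order) and in verifying that deleting roots doesn't perturb the ancestor relation among the remaining vertices, which the paper leaves implicit in the step passing from ``ancestor in $F^-$'' to ``ancestor in $F$''.
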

		\begin{proof}
			By definition,
			\begin{align*}
				V\left(A^{F^-}(X)\right) 
				& = \{y \in V(\Fm): \text{$y$ is an ancestor of some $x \in X$}\} \\
				& = \{y \in V(F) \setminus \root(F): \text{$y$ is an ancestor of some $x \in X$}\} \\
				& = \{y \in V(F): \text{$y$ is an ancestor of some $x \in X$}\} \setminus \root(F) \\
				& = V\left(A^F(X)\right) \setminus \root(F) 
                    = V\left(A^F(X)\right) \setminus \root(A^F(X))
                     = V\left((A^F(X))^-\right). \qedhere
			\end{align*}
		\end{proof}

		In the next two definitions we show how to relate rooted forests to hypergraphs.

		\begin{definition}
			For a set $S$, an \emph{$S$-forest} is a balanced, ordered forest $F$ with $V(F)\subseteq S\times S$ so that all edges are of the form $(y,s)(y',s)$, and all roots are of the form $(s,s)$.
		\end{definition}

		For a set $S$, an element $s \in S$, and an $S$-forest $F$, define $F(s)$ to be the subgraph of $F$ induced on $\{(y,s) \in V(F): y \in S\}$. 

		\begin{observation}
			For a set $S$, an element $s \in S$, and an $S$-forest $F$, the subgraph $F(s)$ is a tree.
		\end{observation}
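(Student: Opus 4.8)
The plan is to show that $F(s)$ is a union of connected components of $F$, and in fact exactly one such component, which being a component of a rooted forest is a rooted tree.

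First I would record the one feature of the definition of an $S$-forest that matters here: every edge of $F$ has the form $(y,s)(y',s)$, i.e.\ both of its endpoints share the same second coordinate. Write $V_s=\{(y,s)\in V(F):y\in S\}$ for the vertex set of $F(s)$. The edge condition says precisely that every edge of $F$ either has both endpoints in $V_s$ or both endpoints outside $V_s$; in particular no edge of $F$ joins $V_s$ to its complement, so $V_s$ is a union of vertex sets of connected components of $F$. Hence the induced subgraph $F(s)=F[V_s]$ is exactly the disjoint union of those components. Since $F$ is a rooted forest, each of these components is a rooted tree, and therefore $F(s)$ is itself a rooted forest.

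It remains to see that $F(s)$ consists of a single component. Let $T$ be any component of $F(s)$; it is a component of $F$, hence a rooted tree, and its root is a vertex of $T\subseteq V_s$, so it has the form $(y,s)$ for some $y\in S$. By the defining property of $S$-forests, every root of $F$ is of the form $(s',s')$, which forces $y=s$; thus the root of $T$ is the vertex $(s,s)$. As a single vertex $(s,s)$ lies in at most one component of $F$, there is at most one component $T$ in $F(s)$ — namely the component of $F$ containing $(s,s)$ — and this is the only point where one must exclude the degenerate empty case, exactly in the spirit of the convention on height-$0$ trees made earlier in the section. Hence $F(s)$ is a single rooted tree. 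I do not expect any genuine obstacle: the statement is a bookkeeping consequence of the fact that the ``columns'' of an $S$-forest (the vertices with a fixed second coordinate) are unions of connected components, thanks to the structural restriction on edges, together with the observation that the root condition pins down a unique root in each nonempty column.
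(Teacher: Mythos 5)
Your proof is correct and rests on the same two facts as the paper's argument — edges of an $S$-forest stay within a fixed ``column'' $V_s$, and roots have the form $(s',s')$ — just packaged as ``$V_s$ is a union of components, each rooted at $(s,s)$, hence there is at most one,'' rather than the paper's direct ``trace the root-to-vertex path and observe it stays in $V_s$, so every vertex of $F(s)$ connects to $(s,s)$.'' The reasoning is essentially the same; one tiny nit is that the empty case you flag is not literally covered by the height-$0$ convention (a height-$0$ tree has one vertex, not none), but this degenerate case never arises in the paper's use of $F(s)$.
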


		\begin{proof}
			For some $(y,s)\in V(F(s))$, let $P$ be the path in $F$ from a root $r$ to $(y,s)$ ($P$ exists since we are in a rooted forest). 
			Write $P$ as $r = (v_1, s_1) \dots (v_k, s_k) = (y,s)$. Then $s_1 = \dots = s_k = s$, because all edges in $F$ are of form $(y,s')(y',s')$, for some $s' \in S$, and $s_k = s$. Thus $r = (s,s)$, because roots have form $(s', s')$, with $s' \in S$.
			This shows that $P$ is a path from $(y,s)$ to $(s,s)$ in $F(s)$. Therefore, $F(s)$ is connected and thus a tree.
		\end{proof}

		We will call the tree $F(s)$ the ``subtree of $F$ rooted at $(s,s)$''.
		We will use $S$-forests as a way to represent a collection of balanced trees of the same height, with roots and vertices in $S$, where each element of $S$ is the root of at most one tree, but can be a vertex in many of these trees.

		For an $S$-forest $F$, a vertex $v\in V(F)$, and $i \le \level(v)$, we let $\pi^F_i(v)$ be the first coordinate of the ancestor of $v$ at level $i$. In particular, $\pi^F_{\level(y,s)}\big((y,s)\big)=y$ and $\pi^F_{0}\big((y,s)\big)=s$. We abbreviate $\pi(v):=\pi^F_{\level(v)}(v)$ and $\pi_0(v):=\pi^F_{0}(v)$ (noting that these do not depend on $F$, since they are just projections on the first/second coordinates of $v$).

		The following is how we relate hypergraphs to forests.
		\begin{definition}[$\HH \otimes_r F$]
			For a hypergraph $\HH$, a $V(\HH)$-forest $F$, and an integer $r \ge 1$, let $\HH\otimes_r F$ be the $r$-uniform hypergraph with vertex set $L(F)$, where an $r$-tuple $e$ is an edge wherever $\pi_0(e)\subseteq f$ for some edge $f$ of $\HH$. Equivalently, $\{(v_1, x_1), \dots, (v_{r}, x_r)\}\in E(\HH\otimes_r F)$ whenever $\{x_1, \dots, x_r\}$ is contained in some edge in $\HH$.
		\end{definition} 

		When $r$ is known from context or its value is not relevant for the arguments, we often abbreviate $\HH \otimes_r F$ to $\HH \otimes F$.

	\subsection{Clean and separated tree assignments}\label{Section_cleaning}
		The goal of this section is to prove that every $V(\HH)$-forest, where $\HH$ is a hypergraph with bounded maximum degree, has a large subforest with the same root set that interacts nicely with $\HH$. 
		\begin{definition}[Clean hypergraphs]
			For a hypergraph $\HH$ and a $V(\mathcal H)$-forest $F$, we say that a colouring of the hypergraph $\HH \otimes F$ is \emph{clean} if any two edges $e,f$ satisfying $\root(e)=\root(f)$ and $\type(e)=\type(f)$ (i.e. $\type\big(e \cap V(F(v))\big) = \type\big(f \cap V(F(v))\big)$ for every $v \in \root(f)$)  have the same colour.
		\end{definition}
		This is equivalent to asking that, for any edge $e\in \mathcal H\otimes F$ and monomorphism $\phi:A(e)\to F$ which preserves roots, $\phi(e)$ has the same colour as $e$.

		\begin{definition}[Separated tree assignments]
			Let $G$ be a graph and $F$  a $V(G)$-forest.
			We say that $F$ is \emph{$d$-separated} on $G$ if for $u,v\in V(G)$ with $d_G(u,v)\leq d$ (namely the distance between $u$ and $v$ on $G$ is at most $d$), we have $\pi(F(u))\cap \pi(F(v))=\emptyset$.
		\end{definition}
		Note that this is equivalent to ``for $u,v\in V(F)$ with $d_G(\pi_0(u), \pi_0(v))\leq d$ we have $\pi(u)\neq \pi(v)$''. Informally, thinking of a $V(G)$-forest as a collection of trees on $V(G)$, where each vertex is the root of at most one tree, this means that trees rooted at vertices that are at distance at most $d$ from each other in $G$ are vertex-disjoint.

		The goal of the section is to prove Lemma~\ref{Lemma_cleaning_forest} which shows that under certain assumptions on $\HH$, and for $d\gg d'$, any forest of $d$-ary trees rooted on $V(\HH)$ contains a $d'$-ary subforest with the same root set that is both clean and separated. We start with establishing ``separated''.
		\begin{lemma}\label{Lemma_separate_2_trees}
			There is a function $D(h,d)$, such that the following holds for $d_1,d_2,d_1',d_2'$ with $d_1 \ge D(h,d_1')$ and $d_2 \ge D(h,d_2')$.
			Let $T_1$ and $T_2$ be balanced, ordered $d_1$- and $d_2$-ary trees of height $h$, whose vertex sets may intersect, but whose roots are distinct. Then there are balanced $d_1'$- and $d_2'$-ary, height $h$ subtrees $T_1'\subseteq T_1$ and $T_2'\subseteq T_2$ with $V(T_1')$, $V(T_2')$ disjoint.
		\end{lemma}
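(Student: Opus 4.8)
The plan is to reduce the statement to a single‑tree pruning fact and then apply it twice. The key tool I would prove first is the following.

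\textbf{Claim.} \emph{There is a function $E(h,d',m)$ — one may take $E(h,d',m)=d'+m$ — with the following property: if $T$ is a balanced ordered $d$-ary tree of height $h$ with root $\rho$, and $W\subseteq V(T)\setminus\{\rho\}$ satisfies $|W|\le m$ and $d\ge E(h,d',m)$, then there is a balanced $d'$-ary height-$h$ subtree $T'\le T$ with $V(T')\cap W=\emptyset$.}

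I would prove the Claim by induction on $h$. For $h=0$ the tree $T$ is a single vertex, namely its root, so $W=\emptyset$ and $T'=T$ works. Suppose $h\ge 1$. The root $\rho$ has $d\ge d'+m$ children $c_1,\dots,c_d$; let $S_i$ be the subtree of $T$ hanging below $c_i$ (a balanced $d$-ary tree of height $h-1$ rooted at $c_i$) and put $W_i=W\cap V(S_i)$. The $S_i$ partition $V(T)\setminus\{\rho\}$, so the $W_i$ partition $W$; in particular at most $m$ indices $i$ have $c_i\in W$. Discard those indices; for each surviving $i$ we have $c_i\notin W_i$ and $|W_i|\le m$, so by the induction hypothesis (using $d\ge d'+m\ge E(h-1,d',m)$) there is a balanced $d'$-ary height-$(h-1)$ subtree $S_i'\le S_i$ with $V(S_i')\cap W_i=\emptyset$. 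Choosing any $d'$ of the surviving indices (possible since $d-m\ge d'$) and letting $T'$ be $\rho$ together with these $S_i'$, we get a balanced $d'$-ary tree of height $h$ whose induced vertex order makes it an ordered subtree of $T$ with the same root set and height, so $T'\le T$; and $V(T')=\{\rho\}\cup\bigcup_i V(S_i')$ is disjoint from $W$ since $\rho\notin W$ and $V(S_i')\cap W=V(S_i')\cap W_i=\emptyset$.

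For the lemma itself I would apply the Claim twice. First apply it to $T_2$ with $W=\{\rho_1\}\cap V(T_2)$ and $m=1$ (legitimate since $\rho_1\neq\rho_2$, the root of $T_2$); this requires $d_2\ge E(h,d_2',1)=d_2'+1$ and produces a balanced $d_2'$-ary height-$h$ subtree $T_2'\le T_2$ with $\rho_1\notin V(T_2')$ and $|V(T_2')|\le 1+d_2'+\dots+(d_2')^h\le (h+1)(d_2')^h=:N$. Then apply the Claim to $T_1$ with $W=V(T_2')\cap V(T_1)$ and $m=N$ (legitimate since $\rho_1\notin V(T_2')\supseteq W$); this requires $d_1\ge E(h,d_1',N)=d_1'+N$ and produces a balanced $d_1'$-ary height-$h$ subtree $T_1'\le T_1$ with $V(T_1')\cap V(T_2')=V(T_1')\cap\bigl(V(T_2')\cap V(T_1)\bigr)=\emptyset$. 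So $T_1',T_2'$ are as required, and it suffices to take $D$ large enough that $D(h,d)\ge d+(h+1)d^h+1$ and to apply the above with $d_1,d_2\ge D(h,\max\{d_1',d_2'\})$. The main obstacle is the cross-level interaction between the two trees: a vertex can lie at different levels in $T_1$ and in $T_2$, so one cannot separate them level by level independently; the forbidden-set reformulation sidesteps this by treating $V(T_2')$ as a single opaque set to be avoided inside $T_1$, at the cost — which two height-$1$ trees sharing all their leaves show is essentially unavoidable — of the branching requirement on $T_1$ depending also on $d_2'$ and $h$ through $N$.
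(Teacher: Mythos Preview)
Your forbidden-set Claim is correct and cleanly proved, but as written the argument does not establish the lemma as stated: you end up requiring $d_1,d_2\ge D(h,\max\{d_1',d_2'\})$ rather than the separate hypotheses $d_i\ge D(h,d_i')$, and you then assert that this cross-dependence is ``essentially unavoidable''. That assertion is false, and in fact your own argument yields the full statement with one extra line. Assume without loss of generality that $d_1'\ge d_2'$. Then your Step~1 (pruning $T_2$ to a $d_2'$-ary subtree avoiding $\rho_1$) only needs $d_2\ge d_2'+1$, which follows from $d_2\ge D(h,d_2')$; and Step~2 (pruning $T_1$ to avoid $V(T_2')$) needs $d_1\ge d_1'+(h+1)(d_2')^h$, which, since $d_2'\le d_1'$, is at most $d_1'+(h+1)(d_1')^h < D(h,d_1')\le d_1$. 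Your height-$1$ example shows only that $D(h,d')\ge 2d'$ is necessary, not that $D$ must see both target arities simultaneously.

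With that fix in place, your route is genuinely different from the paper's. The paper assigns each non-root vertex of $V(T_1)\cup V(T_2)$ a random label in $\{1,2\}$ and uses Chernoff plus a union bound to show that with positive probability every non-leaf of $T_i$ has at least $d_i/3$ children labelled $i$; the desired $T_i'$ is then extracted inside the label-$i$ vertices (together with the root of $T_i$). The probabilistic argument is automatically symmetric in the two trees and gives $D(h,d')$ linear in $d'$ (plus an additive term depending on $h$), whereas your $D$ is of order $(d')^h$; on the other hand your argument is entirely elementary and constructive, and its Claim generalises immediately to avoiding any prescribed small vertex set rather than just the vertex set of one other tree.
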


		\begin{proof}
			Assign to each non-root vertex $u$ of $V(T_1) \cup V(T_2)$ a random variable $\tau(u)$ which is chosen to be either $1$ or $2$ uniformly at random, independently of other elements. We claim that, with positive probability, for every non-leaf in $T_i$, at least $d_i/3$ of its children are assigned $i$, for $i \in [2]$. Indeed, the probability that this fails for a particular non-leaf in $T_i$ is at most $e^{-d_i/36}$, by Chernoff's bounds, and thus by a union bound the probability that some non-leaf in $T_i$ has fewer than $d_i/3$ children that were assigned $i$ is at most $d_i^h e^{-d_i/36} < 1/2$ (using $d_i \ge D(h,d_i')$), as claimed. It follows that there exist subtrees $T_i' \subseteq T_i$, for $i \in [2]$, such that $T_i'$ is a $d_i'$-ary tree of height $h$ and its non-root vertices were assigned $i$ (and are not the root of $T_{3-i}$; using $d_i \ge D(h,d_i')$). Then the trees $T_1', T_2'$ are vertex-disjoint (using that the roots of $T_1$ and $T_2$ are distinct), as required.
		\end{proof}

		The above can be used to show that $F$ always contains a separated subforest. In the proof, for a function $f$, we write $f^{i}$ to denote $\overbrace{f \circ \dots \circ f}^{\text{$i$ times}}$. Recall that $F' \le F$ means that $F'$ is a subforest of $F$ with the same root set.
		\begin{lemma}[Separating forests]\label{Lemma_separate_trees}
			Let $d\gg b,h, \Delta$.
			Let $G$ be a graph with maximum degree at most $\Delta$ and $F$ be a $V(G)$-forest of balanced, $d$-ary, height $h$ trees. Then there is a forest $F'\leq F$ of balanced, $b$-ary, height $h$ trees such that $F'$ is $b$-separated on $G$.
		\end{lemma}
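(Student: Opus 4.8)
The plan is to iterate Lemma~\ref{Lemma_separate_2_trees} along the edges of the ``distance-$\leq b$'' graph of $G$. Let $H$ be the graph on $V(G)$ with $uv\in E(H)$ whenever $1\leq d_G(u,v)\leq b$; since $\Delta(G)\leq\Delta$ we have $\Delta(H)\leq\Delta':=b\Delta^{b}$. Because $F$ is a $V(G)$-forest, for each root $v$ the map $\pi$ is injective on $V(F(v))$, so $\pi$ identifies $F(v)$ with a balanced, $d$-ary, height-$h$ tree $\widetilde F(v)$ drawn on the vertex set $V(G)$ and rooted at $v$; moreover $\pi(F(u))\cap\pi(F(v))=V(\widetilde F(u))\cap V(\widetilde F(v))$. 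Hence it suffices to find, for each $v$, a balanced $b$-ary height-$h$ subtree $\widetilde F'(v)\subseteq\widetilde F(v)$ with the same root such that $V(\widetilde F'(u))\cap V(\widetilde F'(v))=\emptyset$ for every $uv\in E(H)$; taking $F'$ to be the union of the $\pi$-preimages of the $\widetilde F'(v)$ then yields $F'\leq F$ (all roots are retained, so $F'$ has the same root set and height) that is $b$-separated on $G$.

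First I would fix a sequence $d=d_{0}\geq d_{1}\geq\dots\geq d_{\Delta'}=b$ with $d_{i-1}=D(h,d_{i})$, where $D$ is the function from Lemma~\ref{Lemma_separate_2_trees}; we may assume $D(h,m)\geq m$ and $D(h,\cdot)$ monotone, so this sequence is well defined and decreasing, and $d_{0}$ depends only on $h,b,\Delta$ --- this is the content of the hypothesis $d\gg b,h,\Delta$ (if $d>d_{0}$, shrink each $\widetilde F(v)$ to a $d_{0}$-ary subtree first). Then I would enumerate $E(H)=\{e_{1},\dots,e_{M}\}$ and process the edges one by one, maintaining balanced height-$h$ subtrees $\widetilde F_{t}(v)\subseteq\widetilde F(v)$ rooted at $v$, with $\widetilde F_{0}(v)=\widetilde F(v)$: at step $t$, write $e_{t}=u_{t}v_{t}$ and note that $\widetilde F_{t-1}(u_{t})$ and $\widetilde F_{t-1}(v_{t})$ are $d_{a}$-ary and $d_{a'}$-ary respectively, where $a$ (resp.\ $a'$) counts the edges at $u_{t}$ (resp.\ $v_{t}$) among $e_{1},\dots,e_{t-1}$, so $a,a'\leq\Delta'-1$; since $d_{a}=D(h,d_{a+1})$, $d_{a'}=D(h,d_{a'+1})$ and the roots $u_{t}\neq v_{t}$ are distinct, Lemma~\ref{Lemma_separate_2_trees} provides vertex-disjoint balanced height-$h$ subtrees that are $d_{a+1}$-ary and $d_{a'+1}$-ary, which we declare to be $\widetilde F_{t}(u_{t})$ and $\widetilde F_{t}(v_{t})$, leaving all other trees unchanged. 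After step $M$, each $\widetilde F_{M}(v)$ is $d_{\deg_{H}(v)}$-ary with $\deg_{H}(v)\leq\Delta'$, hence of arity at least $b$, so a final trivial shrink (keeping $b$ of the children of each retained non-leaf) produces the required $\widetilde F'(v)$.

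Correctness is then immediate: every operation replaces trees by subtrees, so once two trees are made vertex-disjoint they stay so; thus for each $e_{t}=u_{t}v_{t}\in E(H)$ we have $\widetilde F'(u_{t})\subseteq\widetilde F_{t}(u_{t})$ and $\widetilde F'(v_{t})\subseteq\widetilde F_{t}(v_{t})$ vertex-disjoint, which is exactly $b$-separation. I do not expect a genuine obstacle: all the combinatorial substance lives in Lemma~\ref{Lemma_separate_2_trees}, and the only points needing care are the bookkeeping that each tree is touched at most $\Delta'$ times (so that the chain $d_{0},\dots,d_{\Delta'}$ is long enough and every invocation of Lemma~\ref{Lemma_separate_2_trees} meets its arity hypotheses) and the observations that passing to a subtree preserves vertex-disjointness from other trees and that retaining all roots is what makes $F'\leq F$ hold.
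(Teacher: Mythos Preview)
Your proposal is correct and follows essentially the same approach as the paper: process the edges of $G^{b}$ one at a time, applying Lemma~\ref{Lemma_separate_2_trees} at each edge to replace the two incident trees by vertex-disjoint subtrees of slightly smaller arity, and use the bounded degree of $G^{b}$ to ensure each tree is touched only boundedly many times so that the arity never drops below $b$. The paper's proof is terser (it does not pass through the $\pi$-identification with $\widetilde F(v)$, working directly with the $S$-trees) and uses a slightly different bookkeeping device ($g(d)=\max\{d':d\ge D(h,d')\}$ in place of your explicit chain $d_0,\dots,d_{\Delta'}$), but the substance is identical.
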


		\begin{proof}
			Fix $b,h,\Delta$, and let $f(d'):=D(h, d')$, where $D$ is the function from Lemma~\ref{Lemma_separate_2_trees}. Let $g(d) = \max\{d' : d \ge f(d')\}$, and note that $g(f(d)) \ge d$ for every $d$. 
			Set $d=f^{\Delta^b}(b)$.
			Then $g^i(d) \ge b$ for every $i \le \Delta^b$.

			We modify the forest $F$, by doing the following for each edge $xy$ in $G^b$, one at a time.
			Suppose that $F(x)$ and $F(y)$ are $d_x$- and $d_y$-ary trees of height $h$. Apply \Cref{Lemma_separate_2_trees} to get $g(d_x)$- and $g(d_y)$-ary subtrees $F''(x) \le F(x)$ and $F''(y) \le F(y)$, which are balanced height $h$ forests and are vertex-disjoint, and replace $F(x)$ by $F''(x)$ and $F(y)$ by $F''(y)$.
			Notice that each subtree $F(x)$ is modified at most $\Delta^b$ times, and so it is $g^{i}(d)$-ary for some $i \le \Delta^b$. Since $g^{i}(d) \ge b$, we can choose $F'(x)$ to be a balanced $b$-ary subtree of height $h$ of the tree $F(x)$ at the end of the process.
			Taking $F'$ to be the union of trees $F'(x)$ completes the proof.
		\end{proof}

		 Next we focus on establishing cleanliness.
			\begin{lemma} \label{lem:ramsey-trees}
				There is a function $D(d,r,h,k,s)$ such that the following is true. Let $d,r,h,k,s$ be positive integers and set $D=D(d,r,h,k,s)$.
				Let $F$ be a balanced, $D$-ary ordered forest of height $h$ with $k$ components. Let $S$ be a balanced, ordered forest of height $h$ with $r$  leaves and $k$ components. Given any $s$-colouring of the copies of $S$ in $F$, there is a balanced ordered $d$-ary subforest $F' \le F$ of height $h$ with $k$ components, such that all copies of $S$ in $F'$ have the same colour.
			\end{lemma}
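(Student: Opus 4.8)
The plan is to prove Lemma~\ref{lem:ramsey-trees} by a single induction on the height $h$, proving the statement for all values of the remaining parameters at once, and using Ramsey's theorem together with the Product Ramsey theorem (the multidimensional generalisation of Ramsey's theorem) as the combinatorial engine.

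First I would pin down the combinatorial meaning of a copy. By Lemmas~\ref{Lemma_level_preserved_by_monomorphism} and~\ref{Lemma_A_preserved_by_monomorphisms}, a monomorphism $\phi\colon S\to F$ preserves levels and ancestry, and since $S$ and $F$ have the same number $k$ of components, $\phi$ sends the $i$-th root of $S$ to the $i$-th root of $F$, hence maps the $i$-th component of $S$ into the $i$-th component of $F$. So the set of copies of $S$ in $F$ is canonically the product, over $i\in[k]$, of the sets of copies of the $i$-th component of $S$ in the $i$-th component of $F$; and passing to a balanced $d$-ary subforest $F'\le F$ restricts this product coordinatewise. This reduces the lemma to the case $k=1$: an $s$-colouring of a product of finitely many such ``copy spaces'' is handled by the standard argument behind the Product Ramsey theorem --- process the coordinates one at a time, at each step recolouring the current space by the boundedly many colourings it induces on fixed bounded ``samples'' of the other coordinates, and applying the $k=1$ case to shrink that space --- and this works because the $k=1$ case is available with an arbitrary number of colours.

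It remains to prove the case $k=1$. The base case $h=0$ is trivial, as $F$ and $S$ are single vertices with exactly one copy between them. For the inductive step, let $c_1<\dots<c_D$ be the children of the root of $F$, with $T_a$ the balanced $D$-ary height-$(h-1)$ subtree at $c_a$, and let $\sigma_1<\dots<\sigma_m$ be the children of the root of $S$, with $S_t$ the height-$(h-1)$ subtree at $\sigma_t$. A copy of $S$ in $F$ then amounts to a skeleton $1\le i_1<\dots<i_m\le D$ together with a copy $\psi_t$ of $S_t$ in $T_{i_t}$ for each $t$; for a fixed skeleton these copies form the product over $t$ of the sets of copies of $S_t$ in $T_{i_t}$, that is, the copies of the $m$-component forest $S_1\sqcup\dots\sqcup S_m$ in $T_{i_1}\sqcup\dots\sqcup T_{i_m}$. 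The step then has two parts. The first aims to make the colour of a copy of $S$ a function of its skeleton alone: iterating the inductive hypothesis at height $h-1$ over the subtree coordinates --- in the style of the proof of the Product Ramsey theorem, fixing bounded samples of the as-yet-untreated data so that all colourings stay bounded, and pigeonholing over the relevant child indices --- one replaces each $T_a$ by a balanced $d$-ary subtree $T_a'\le T_a$ along which the embeddings $\psi_t$ no longer affect the colour. The second part applies Ramsey's theorem for $m$-uniform hypergraphs to the resulting colouring of skeletons to obtain a $d$-set $B\subseteq[D]$ on which that colour is constant; then the tree $F'$ with root that of $F$, children $\{c_a:a\in B\}$ and subtree $T_a'$ under $c_a$ is the required balanced $d$-ary monochromatic subforest, once $D=D(d,r,h,k,s)$ is taken large enough to feed every Ramsey and pigeonhole step.

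The main obstacle is the first part of the inductive step. The subtrees $T_a$ cannot be cleaned independently, because the colour of a copy of $S$ depends on $\psi_1,\dots,\psi_m$ jointly and, worse, which $T_a$ hosts which $S_t$ varies with the skeleton, so the ``subtree coordinates'' are genuinely entangled with the skeleton coordinate. Untangling them forces one to organise the whole argument as a carefully ordered chain of Ramsey-type reductions (precisely the phenomenon behind the proof of the Product Ramsey theorem), all the while keeping the eventual subforest $F'$ genuinely $d$-ary --- in particular one cannot simply split the children of the root of $F$ into $m$ consecutive blocks and keep $d$ children from each, since that would leave the root of $F'$ with $md$ children and reintroduce skeletons one does not control. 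Getting this bookkeeping and these dependencies right is where essentially all the work lies; the remainder is the routine check, via Lemmas~\ref{Lemma_level_preserved_by_monomorphism} and~\ref{Lemma_A_preserved_by_monomorphisms}, that the encoding of copies is faithful and compatible with passing to subforests.
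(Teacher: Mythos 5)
Your proposal is correct and takes essentially the same approach as the paper: the paper's double induction on $(h,k)$ with the $k\geq 2$ case peeling off the first component (a Product-Ramsey-style reduction to $k=1$) and the $k=1$ case deleting roots, cleaning the root-subtrees via the inductive hypothesis at height $h-1$, and then applying Ramsey's theorem to the skeletons. The only differences are expository (single induction on $h$ with a bootstrap over $k$ versus lexicographic induction on $(h,k)$), and you leave the bookkeeping of the nested shrinkings at a higher level of abstraction than the paper does.
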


				\begin{proof} 
					We prove the lemma by induction on $h$ and $k$. 
					The initial case is $h=k= 1$, namely $F$ and $S$ are both stars, with $D$ and $r$ leaves, respectively, and so copies of $S$ correspond to $r$-sets of leaves of $F$. This case thus follows from Ramsey's theorem.

					Now fix $(h,k)\neq(1,1)$ and suppose that the lemma holds for all $(h',k')$ with either $h'=h, k'<k$ or with $h'<h, k' \geq 1$. Let $\chi$ be an $s$-colouring of the copies of $S$ in $F$.

					Suppose first that $k = 1$ and $h \ge 2$, so $F$ and $S$ are trees of height $h$.  Denote the root of $F$ by $r_F$ and the root of $S$ by $r_S$.
					Let $F' = F \setminus \{r_F\}$, let $S' = S \setminus \{r_S\}$, and let $\ell$ be the number of components in $S'$; so $1 \le \ell \le r$ as $S$ has $r$ leaves. Consider the $s$-colouring $\chi'$ of copies of $S'$ in $F'$ defined as follows: if $S''$ is a copy of $S'$ in $F'$, consider the copy of $S$ obtained by joining $r_F$ to the roots of the components of $S'$ (since both $S'$ and $F'$ have height $h-1$, the roots of components of $S'$ are roots of components of $F'$, so the copy of $S$ formed in this way is indeed a subtree of $F$), and colour $S''$ by the colour of this copy of $S$ in $F$ according to $\chi$. 

					Let $\alpha$ be such that $d, r, h, s \ll \alpha \ll D$, and set $t = \binom{\alpha}{\ell}$. 
					By choice of parameters, there is a sequence $d_0, \dots, d_t$ such that $d_0 = D$, $d_t = d$, and $r, h, s, d_t \ll d_{t-1} \ll \dots \ll d_0$.
					Let $A$ be a set of $\alpha$ children of $r_F$ in $F$, and let $A_1, \dots, A_{t}$ be any enumeration of its $\ell$-subsets. 
					For $a \in A$, let $T_a$ be the subtree of $F$ rooted at $a$.
					We claim that there exist sequences $T_a = T_a^{(0)} \supseteq T_a^{(1)} \supseteq \dots \supseteq T_a^{(t)}$ for $a \in A$, where $T_a^{(i)}$ is a $d_i$-ary ordered tree of height $h-1$ with the following property: all copies of $S'$ in the forest $\bigcup_{a \in A_i} T_a^{(i)}$ have the same colour. To see this, apply the induction hypothesis with $h' = h-1$ and $k' = \ell$, to $\bigcup_{a \in A_i} T_a^{(i-1)}$, letting $\bigcup_{a \in A_i} T_a^{(i)}$ be the resulting subforest, and take $T_a^{(i)} \subseteq T_a^{(i-1)}$ to be an arbitrary $d_i$-ary ordered subforest of height $h-1$ for $a \in A \setminus A_i$. The subtrees $T_a^{(i)}$ satisfy the requirements. Denote by $c_i$ the colour of any copy of $S'$ in $\bigcup_{a \in A_i} T_a^{(i)}$ (this does not depend on the particular choice of a copy of $S'$).

					Consider the auxiliary colouring of the complete $\ell$-graph on $A$, where the edge $A_i$ has colour $c_i$. Then by Ramsey's theorem and choice of $\alpha$ there is a monochromatic subset $A' \subseteq A$ of size $d$; say the common colour is $c$. Let $F''$ be the $d$-ary tree of height $h$ obtained by reattaching the root of $F$ to the subtrees $T_a^{(t)}$ with $a \in A'$. Then $F'' \subseteq F$ is an ordered $d$-ary tree of height $h$ whose copies of $S$ all have colour $c$, as required.

					Now suppose $k \ge 2$.
					Let $T$ be the first tree in $F$, let $F' = F \setminus V(T)$, let $R$ be the first tree in $S$ and let $S' = S \setminus V(R)$. Choose $d'$ satisfying $d, r, h, k, s \ll d' \ll D$, and let $T'$ be any $d'$-ary subtree of $T$ of height $h$. Enumerate the copies of $R$ in $T'$ by $R_1, \dots, R_t$. By choice of $d'$ and $t$ there exist $d_0, \dots, d_t$ such that $d_0 = D$, $d_t = d$, and $r, s, h, k, d_t \ll d_{t-1} \ll \dots \ll d_0$. 
					By induction, there exist subforests $F' \supseteq F_1 \supseteq \dots \supseteq F_t$, such that $F_i$ is a $d_i$-ary forest of height $h$ with $k-1$ components, whose copies of $S$ in $T' \cup F_i$ that contain $R_i$ all have the same colour, denoted $c_i$.
					Now consider the colouring of copies of $R$ in $T'$, where $R_i$ is coloured $c_i$. By induction, $T'$ contains a $d$-ary subtree $T''$ of height $h$ whose copies of $R$ all have the same colour, say red. Then the forest $T'' \cup F_t$ is a $d$-ary forest of height $h$ with $k$ components whose copies of $S$ are red.
				\end{proof}
			
		The above can be used to show that $\mathcal H\otimes F$ always contains a cleanly coloured hypergraph.	 
		\begin{lemma}[Cleaning forests]\label{Lemma_cleaning_forest}
			Let $d\gg b,h,s, r, \Delta$. Let $\mathcal H$ be an $r$-uniform hypergraph with maximum degree at most $\Delta$ and $F$ a $V(\mathcal H)$-forest of balanced $d$-ary trees of height $h$. 

			Then for any $s$-colouring of $\HH\otimes F$, there is some ordered subforest $F'\leq F$ of balanced, $b$-ary, height $h$ trees such that the subhypergraph $\HH\otimes F'$ is cleanly coloured.
		\end{lemma}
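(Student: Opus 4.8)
The plan is to apply Lemma~\ref{lem:ramsey-trees} repeatedly, once for each ``local'' cleanness constraint, while making sure each individual tree of $F$ is shrunk only a bounded (in particular, $|V(\HH)|$-independent) number of times. First I unpack the target. An edge $e$ of $\HH\otimes F$ is an $r$-set of leaves of $F$; write $R(e)=\{\pi_0(u):u\in e\}$, which is a subset of some edge of $\HH$ of size at most $r$, and call $\tau(e)=\big(\type_{F(x)}(e\cap V(F(x)))\big)_{x\in R(e)}$ the \emph{type profile} of $e$ --- a tuple of types of height-$h$ trees whose numbers of leaves sum to $r$. In this language a colouring of $\HH\otimes F$ is clean exactly when the colour of $e$ depends only on $(R(e),\tau(e))$. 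Fix an \emph{admissible} pair $(R,\tau)$ (meaning $R$ lies in an edge of $\HH$, $|R|\le r$, and $\tau$ is a tuple of types of height-$h$ trees indexed by $R$ whose leaf counts sum to $r$), write $F|_R$ for the sub-forest of $F$ spanned by the trees $\{F(x):x\in R\}$, and let $S(R,\tau)$ be the ordered forest whose $|R|$ components are the trees $\tau_x$ taken in the order of $R$. Using Lemmas~\ref{Lemma_level_preserved_by_monomorphism} and \ref{Lemma_A_preserved_by_monomorphisms} (a monomorphism between balanced height-$h$ forests with equally many components preserves levels and matches the $i$-th component to the $i$-th), one checks that $e\mapsto A^F(e)$ and $S''\mapsto L(S'')$ set up a bijection between $\{e\in E(\HH\otimes F):(R(e),\tau(e))=(R,\tau)\}$ and the set of copies of $S(R,\tau)$ in $F|_R$, where admissibility ($R\subseteq f\in E(\HH)$) ensures every such leaf set is genuinely an edge of $\HH\otimes F$. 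Hence an $s$-colouring of $\HH\otimes F$ induces an $s$-colouring of the copies of $S(R,\tau)$ in $F|_R$, and a single application of Lemma~\ref{lem:ramsey-trees} to $F|_R$ (with $k=|R|$) produces a sub-forest of $F|_R$ in which all surviving edges of $\HH\otimes F$ with root set $R$ and type profile $\tau$ receive one colour --- and this is preserved under any further shrinking.

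Since there are essentially $|E(\HH)|$ admissible pairs, the point is to organise these applications so that each tree is touched a bounded number of times. Decompose $E(\HH)=E_1\cup\dots\cup E_q$ into matchings (pairwise vertex-disjoint edges inside each $E_i$) with $q\le r\Delta$; this is possible because the graph on $E(\HH)$ joining intersecting edges has maximum degree below $r\Delta$, hence is $(r\Delta)$-colourable greedily. Let $N'$ be a bound, depending only on $r$ and $h$, on the number of admissible pairs $(R,\tau)$ with $R$ contained in a given fixed edge of $\HH$. Now process $E_1,\dots,E_q$ in order, maintaining the invariant that at the start of stage $i$ all trees of the current forest share a common arity $a_{i-1}$, with $a_0=d$. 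In stage $i$ handle the edges $f\in E_i$ one by one --- they do not interfere, being vertex-disjoint --- and for each $f$: first shrink the trees $F(x)$, $x\in f$, to be exactly $a_{i-1}$-ary, then run through the $\le N'$ admissible pairs $(R,\tau)$ with $R\subseteq f$, applying Lemma~\ref{lem:ramsey-trees} to $F|_R$ for each one and immediately shrinking the trees $F(x)$, $x\in f\setminus R$, so that the trees of $f$ keep a common arity. After $f$ is handled its trees are $g^{N'}(a_{i-1})$-ary, where $g$ is a fixed non-decreasing function with $g(D^*(d'))\ge d'$ for $D^*(d')=\max_{k\le r}D(d',r,h,k,s)$ and $D$ the function of Lemma~\ref{lem:ramsey-trees}; so after stage $i$ every tree is at least $a_i:=g^{N'}(a_{i-1})$-ary (trees outside $\bigcup E_i$ are unchanged, and $a_{i-1}\ge a_i$), restoring the invariant. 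The final forest is therefore $g^{qN'}(d)$-ary, and since $q,N'$ depend only on $r,\Delta,h$ and $g$ only on $r,h,s$, the hypothesis $d\gg b,h,s,r,\Delta$ gives $g^{qN'}(d)\ge b$; take $F'$ to be any balanced $b$-ary height-$h$ sub-forest of the final forest, so that $F'\le F$.

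Finally, $\HH\otimes F'$ is clean: given edges $e,e'$ of $\HH\otimes F'$ with $R(e)=R(e')=R$ and $\tau(e)=\tau(e')=\tau$, pick $f\in E(\HH)$ with $R\subseteq f$; then $f$ was handled in some stage, and while handling $f$ the pair $(R,\tau)$ was processed, assigning a single colour to all edges of $\HH\otimes F$ with root set $R$ and type profile $\tau$ whose ancestor-forests survived into the then-current forest. As $F'$ is a sub-forest of every forest occurring later in the process, both $e$ and $e'$ are among these, so they have the same colour.

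The genuinely delicate part is the arity bookkeeping of the second paragraph: decomposing $E(\HH)$ into boundedly many matchings, processing them in order, and using the ``equalise, then shrink along'' device to keep the trees of each processed edge at a common arity (so that Lemma~\ref{lem:ramsey-trees}, stated for uniform-arity forests, applies) is exactly what keeps the total arity loss to $qN'$ Ramsey steps, independently of $|V(\HH)|$. By contrast, the bijection in the first paragraph is routine once the monomorphism lemmas are invoked.
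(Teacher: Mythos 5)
Your proposal is correct and follows essentially the same strategy as the paper's proof: decompose $E(\HH)$ into $O(r\Delta)$ matchings by colouring the line graph, process the matchings one at a time so that each tree is shrunk only a bounded number of times, and for each edge of each matching iterate over the boundedly many root-set/type pairs, invoking Lemma~\ref{lem:ramsey-trees} for each and equalising arities of the affected trees after every step. The reformulation in terms of ``admissible pairs $(R,\tau)$'' and the explicit bijection with copies of $S(R,\tau)$ in $F|_R$ is the same bookkeeping the paper does with subsets $e'\subseteq e$ and types $\tau\in\types(h,r)$, just stated in slightly different language.
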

		\begin{proof} 

			Fix $b,h,s,r, \Delta$, and let $f(d)= \max\{D(d,r,h,k,s): k \in [r]\}$, where $D$ is the function from Lemma~\ref{lem:ramsey-trees}.
			Let $g(d) = \max\{d' : d \ge f(d')\}$, noting that $g(f(d)) \ge d$ for every $d \ge 1$.
			Set $\sigma = 2^r|\types(h,r)|$, $\rho = r\Delta+1$, $d = f^{\sigma\rho}(b)$, and observe that $g^{i}(d) \ge b$ for $i \le \rho\sigma$. 

			Let $\HH'$ be the subhypergraph of $\HH$, induced on vertices $v \in V(\HH)$ such that $F(v) \neq \emptyset$.
			We claim that the edges of $\HH'$ can be partitioned into at most $\rho$ matchings (which are collection of pairwise vertex-disjoint edges). Indeed, consider the line graph $L(\HH')$ of $\HH'$, which is the graph on $E(\HH')$ where $ef$ is an edge whenever $e$ and $f$ intersect.
			This graph has maximum degree at most $r\Delta = \rho-1$, and so its chromatic number is at most $\rho$. In other words, there is a proper colouring of $L(\HH')$ with $\rho$ colours, and this corresponds to a partition of $E(\HH')$ into at most $\rho$ matchings. Denote the matchings involved in such a partition by $M_1, \dots, M_\rho$.
			
			Write $F_0 = F$. We define a sequence $F_1, \dots, F_{\rho}$, as follows, so that $F_i \le F_{i-1}$ and $F_i$ is a balanced $g^{\sigma i}(d)$-ary forest of height $h$ for $i \in [k]$. Let $i \in [k]$ and suppose that $F_0, \dots, F_{i-1}$ are defined and satisfy the requirements.
			For each edge $e \in M_i$ we modify the trees $F_{i-1}(v)$ for $v \in e$ (notice that each such tree is a balanced $g^{\sigma (i-1)}(d)$-ary tree of height $h$) in at most $\sigma$ rounds, once for each type $\tau \in \types(h,r)$ and subset $e' \subseteq e$ of size $|\root(\tau)|$. We make sure that after the $j$-th round each $F_{i-1}(v)$ with $v \in e$ is a balanced $g^{\sigma(i-1)+j}(d)$-ary tree of height $h$. Say we are in the $j$-th round, the corresponding type is $\tau$ and the corresponding subset of $e$ is $e'$. Apply \Cref{lem:ramsey-trees} to the ordered forest $\bigcup_{v \in e'}F_{i-1}(v)$ to obtain balanced $g^{(i-1)\sigma+j}(d)$-ary subtrees $F'(v) \le F_{i-1}(v)$ of height $h$, for $v \in e'$, such that all type $\tau$ subforests of $\bigcup_{v \in e'}F'(v)$ have the same colour. Replace $F_{i-1}(v)$ by $F'(v)$ for every $v \in e'$, and replace each $F_{i-1}(v)$ with $v \in e \setminus e'$ by a balanced $g^{(i-1)\sigma+j}(d)$-ary subtree of height $h$, chosen arbitrarily. After these at most $\sigma$ rounds, each $F_{i-1}(v)$ with $v \in e$ is a balanced $g^{k}(d)$-ary tree of height $h$, with $k \le i\sigma$, and for every $\tau \in \types(h,r)$, all type $\tau$ subforests of $\bigcup_{v \in e}F_{i-1}(v)$ with the same root set have the same colour. For each $v \in V(\HH')$, take $F_i(v)$ to be a balanced $g^{i\sigma}(d)$-ary subtree of $F_{i-1}(v)$ of height $h$, chosen arbitrarily, and set $F_i = \bigcup_{v \in V(\HH')}F_i(v)$.

			Notice that $F_\rho$ is a balanced $b$-ary forest (using $b = g^{\sigma\tau}(d)$) of height $h$, satisfying that $F_\rho \le F$ and $\HH \otimes F_\rho$ is cleanly coloured.
		\end{proof}

	\subsection{Distances, short trees, and disconnected tree assignments}\label{Section_distances}
		Given a graph $G$, a $V(G)$-forest $F$, and a hypergraph $\HH$ on $L(F)$ we will need to track what sorts of paths/walks one can find in $\mathcal H$ between different trees of $F$. How long/short a path is will be judged relative to distances in $G$ via the following definition.
		\begin{definition}[$||\ast ||_{F, G}$]
			Let $F$ be a $V(G)$-forest for a graph $G$. For a set $e\subseteq L(F)$, we define
			$||e||_{G}=\max_{x,y\in e}d_{G}(\pi_0(x), \pi_0(y))$. 
		\end{definition}
		We remark that if $P,Q$ have the same root sets, then $||P||_G=||Q||_G$.
		This definition gives a concise way of defining $\mathcal G^t\otimes F$ (recall that $\G^t$ is a shorthand for the $r$-uniform hypergraph $G^t_{(r)}$, which is the $r$-uniform hypergraph on $V(G)$ whose edges are $r$-sets of vertices that are pairwise at distance at most $t$ from each other in $G$).
		\begin{observation}\label{Observation_powers_distance_relation}
			For a graph $G$ and forest $F$ rooted on $V(G)$, the edge set of the hypergraph $\mathcal G^t\otimes_r F$ consists of exactly the $r$-sets $e\subseteq L(F)$ which have $||e||_{G}\leq t$.
		\end{observation}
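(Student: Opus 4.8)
The plan is to do nothing more than unwind the three definitions involved --- that of $\mathcal{G}^t = \mathcal{G}^t_{(r)}$, that of the product $\mathcal{H} \otimes_r F$, and that of $\|\cdot\|_G$ --- and check that both sides describe the same family of $r$-subsets of $L(F)$. First I would fix an arbitrary $r$-set $e = \{(y_1,x_1),\dots,(y_r,x_r)\} \subseteq L(F)$ and put $S := \pi_0(e) = \{x_1,\dots,x_r\} \subseteq V(G)$. Directly from the definitions of $\pi_0$ and of $\|\cdot\|_G$, and using that $\pi_0$ maps $e$ onto $S$, we have $\|e\|_G = \max_{a,b\in S} d_G(a,b)$; from the definition of $\otimes_r$, the $r$-set $e$ is an edge of $\mathcal{G}^t \otimes_r F$ exactly when $S \subseteq f$ for some $f \in E(\mathcal{G}^t)$; and by the definition of $\mathcal{G}^t_{(r)}$, the edges of $\mathcal{G}^t$ are precisely the $r$-element subsets of $V(G)$ whose pairwise $G$-distances are all at most $t$.

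Granting these translations, the forward direction is immediate: if $S \subseteq f$ with $f$ an edge of $\mathcal{G}^t$, then any two vertices of $S$ lie in the $r$-set $f$ and hence are at distance at most $t$, so $\|e\|_G \le t$. For the reverse direction, assume $\|e\|_G \le t$, so all pairwise $G$-distances within $S$ are at most $t$. When $|S| = r$, the set $S$ is itself an edge of $\mathcal{G}^t$ containing $S$, so $e \in E(\mathcal{G}^t \otimes_r F)$ and we are done. When $|S| < r$ --- which is exactly the case where $e$ contains two leaves of the same tree $F(v)$ --- one must instead extend $S$ to an $r$-element set $f$ of vertices with all pairwise distances at most $t$, and then $f$ is an edge of $\mathcal{G}^t$ with $S \subseteq f$, again giving $e \in E(\mathcal{G}^t \otimes_r F)$.

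I expect the whole argument to be routine, with the single point deserving any attention being this extension step in the degenerate case $|\pi_0(e)| < r$: in full generality a set with pairwise distances at most $t$ need not extend to such an $r$-set (for instance in a triangle-free $G$ with $t = 1$ and $r = 3$), so one either invokes the standing assumptions of the paper (where $G$ is a connected bounded-degree expander on many vertices and $t$ is a large constant, so that $r$-element sets of small mutual distance are abundant and $S$ extends) or else notes that the observation is only ever applied to edges $e$ with $|\pi_0(e)| = r$. Beyond that there is really no obstacle --- the observation is a verbatim reformulation of ``$e$ is an edge of $\mathcal{G}^t \otimes_r F$'' as ``$\|e\|_G \le t$''.
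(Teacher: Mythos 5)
Your proof is correct, and since the paper states this observation with no proof at all (treating it as an immediate translation of the definitions), your careful unwinding of $\mathcal{G}^t_{(r)}$, $\otimes_r$, and $\|\cdot\|_G$ is exactly what is wanted. The forward direction is as immediate as you say, and you are right to flag the reverse direction when $|\pi_0(e)| < r$: in that case one genuinely needs to extend $\pi_0(e)$ to an $r$-set of vertices at mutual $G$-distance at most $t$, which can fail in general (your triangle-free example with $t=1$, $r=3$ works), so the observation as literally stated is slightly too strong.

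One caution about your proposed patch (a): even when $G$ is a large connected bounded-degree graph and $t$ is a large constant, the extension does \emph{not} preserve the same $t$ for free --- placing the new vertices near one element of $\pi_0(e)$ only guarantees distances at most roughly $t + r$ to the others, so one needs an additive $O(r)$ (or multiplicative) slack between $\|e\|_G$ and the power $t$. The paper's parameter hierarchy always provides far more slack than this, and the one place the reverse implication is genuinely invoked (the proof of Lemma~\ref{Lemma_augmentation_properties}(iii), where one goes from $\|\rd^{F'}(e)\|_G \le s+2k$ to $\rd^{F'}(e)$ being an edge of $\mathcal G^t\otimes F$) is only ever applied with $t$ vastly larger than $s+2k$, so nothing in the paper actually breaks. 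But your phrasing ``so that $S$ extends'' glosses over the fact that this slack is needed; your patch (b), reading the observation as applying in the non-degenerate regime, is the cleaner resolution.
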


		If $\HH$ is a hypergraph whose vertex set is ordered, a \emph{tight walk} in $\mathcal H$ is a sequence of vertices $P=v_1 \dots v_k$ so that $\{v_i, v_{i+1}, \dots,$ $v_{i+r-1}\}\in E(\mathcal H)$ for each $i \in [k-r+1]$, and also $v_1<\dots< v_{r-1}$ and $v_{k-r+1}<\dots< v_{k}$. We call $\{v_1, \dots, v_{r-1}\}$ and $\{v_{k-r+1}, \dots, v_{k}\}$ the \emph{start} and \emph{end} of the tight walk.
		This definition has the consequence that a subsequence $v_i v_{i+1} \dots v_j$ of $P$ need not be a tight walk. However, it has the useful property that given two tight walks $P, Q$ such that $P$ is from $S$ to $T$ and $Q$ is from $T$ to $R$, we can concatenate $P$ and $Q$ to get a walk $PQ$ from $S$ to $R$.

		We will need the following.
		\begin{definition}[Independent sets of leaves]
			For a forest $F$ we say that two sets of leaves $e,f\subseteq L(F)$ are \emph{independent} if there are vertices $v_e, v_f\in V(F)$ such that the vertices in $e$ are descendants of $v_e$, the vertices in $f$ are descendants of $v_f$, and $v_e$, $v_f$ are incomparable.
		\end{definition}

		\begin{figure}[h]
			\centering
			\includegraphics[scale = 1]{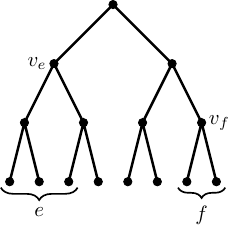}
			\vspace{-.3cm}
			\caption{Two independent sets of leaves $e,f$ and the corresponding ancestors $v_e,v_f$}
			\label{fig:independent}
		\end{figure}

		\begin{observation}\label{Observation_indepndent_sets_have_single_root}
			If $e,f$ are independent sets of leaves in an $S$-forest, then $|\pi_0(e)| = |\pi_0(f)|=1$.
		\end{observation}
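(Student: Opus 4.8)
The plan is to unwind the definition of an $S$-forest and exploit the fact that the second coordinate of a vertex is constant along directed (root-to-leaf) paths. Recall that in an $S$-forest every edge has the form $(y,s)(y',s)$, so the two endpoints of any edge share the same second coordinate. Consequently, if $w$ is a descendant of $v$ — i.e.\ there is a directed path $v = u_0 u_1 \dots u_k = w$ with $u_{i-1}u_i \in E(F)$ for all $i$ — then chaining the edges gives $\pi_0(u_0) = \pi_0(u_1) = \dots = \pi_0(u_k)$, hence $\pi_0(w) = \pi_0(v)$. (This is the same coordinate bookkeeping already used to show that each $F(s)$ is a tree.)

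Now I would apply this observation to the independence hypothesis. Since $e$ and $f$ are independent sets of leaves, there exist vertices $v_e, v_f \in V(F)$ such that every vertex of $e$ is a descendant of $v_e$ and every vertex of $f$ is a descendant of $v_f$; note that the incomparability of $v_e$ and $v_f$ plays no role here. By the previous paragraph, $\pi_0(x) = \pi_0(v_e)$ for every $x \in e$ and $\pi_0(y) = \pi_0(v_f)$ for every $y \in f$. Therefore $\pi_0(e) = \{\pi_0(v_e)\}$ and $\pi_0(f) = \{\pi_0(v_f)\}$, both singletons (using that $e$ and $f$ are non-empty), which is exactly the claim.

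There is essentially no obstacle in this argument: the only point requiring care is that $\pi_0$ denotes the projection onto the second (root-label) coordinate, and that the defining property of an $S$-forest forces precisely this coordinate to be preserved under the ancestor relation. Everything else is a one-line consequence of the definitions of ``$S$-forest'' and ``independent''.
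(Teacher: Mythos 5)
Your argument is correct and matches the paper's proof: both proceed by observing that all of $e$ descends from a single vertex $v_e$ and that the $S$-forest edge constraint forces the second coordinate to be constant along root-to-leaf paths, so $\pi_0(e)$ is a singleton. The extra remark that incomparability is not needed here is also consistent with the paper's argument.
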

		\begin{proof}
			Let all of $e$ descend from $v_e=(x,y)$. By definition of edges in $S$-forests, all descendants $v$ of $(x,y)$ have $\pi_0(v)=y$, showing that $|\pi_0(e)|=|\{y\}|=1$. The same works for $f$.
		\end{proof}

		A crucial definition in the paper is the following one --- it is the structure we find in our coloured graph without monochromatic tight paths.
		\begin{definition}[$k$-disconnected]
			Let $F$ be a $V(G)$-forest for a graph $G$, and let $\mathcal H$ be a coloured $r$-uniform hypergraph with $V(\mathcal H)=L(F)$. We say that a colouring of $\mathcal H$ is \emph{$k$-disconnected} on $(G,F)$ if for any $v\in V(G)$, any independent  $(r-1)$-sets $S,T\subseteq L(F(v))$ of the same type, and any monochromatic tight walk $P$ of length at most $3r$ from $S$ to $T$ in $\mathcal H$, we have $||V(P)||_{G}\geq k$.
		\end{definition}

		Height $1$ families are always disconnected.
		\begin{observation}\label{Observation_disconnected_star_family}
			Let $G$ be a graph and  $F$ be a $V(G)$-forest of height $1$ trees (i.e.\ stars). Then every hypergraph $\mathcal H$ of uniformity at least $3$ with $V(\mathcal H)=L(F)$ is $k$-disconnected on $(G,F)$ for every $k$.
		\end{observation}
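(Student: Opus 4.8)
The plan is to observe that the hypothesis in the definition of $k$-disconnectedness is never satisfied when $F$ consists of height $1$ trees and $r \ge 3$, so that the conclusion holds vacuously. Concretely, I would show that for every $v \in V(G)$ there are no independent $(r-1)$-sets $S, T \subseteq L(F(v))$ at all, and hence nothing to check.

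First I would unpack what independence means inside a star. Fix $v$ and recall that $F(v)$ is a height $1$ tree: a root $(v,v)$ on level $0$ together with its leaves on level $1$. In such a tree the only comparable pairs are the root with a leaf (and each vertex with itself); any two distinct leaves are incomparable, and a leaf has no descendant other than itself. So if $e \subseteq L(F(v))$ is a set all of whose elements descend from a common vertex $w$, then either $w$ is a leaf, in which case $|e| \le 1$, or $w$ is the root of $F(v)$. (Here $w$ must lie in $V(F(v))$ rather than in another component: every element of $e$ has second coordinate $v$, and in a $V(G)$-forest every ancestor of such a vertex also has second coordinate $v$; alternatively one may cite \Cref{Observation_indepndent_sets_have_single_root}.)

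Next I would apply this to an alleged independent pair $S, T \subseteq L(F(v))$ with witnessing vertices $v_S, v_T$. Since $r \ge 3$, both $S$ and $T$ have size $r-1 \ge 2$, so by the previous step $v_S$ and $v_T$ must each be the root of $F(v)$. Then $v_S = v_T$, contradicting the requirement that $v_S$ and $v_T$ be incomparable, as a vertex is always comparable to itself. Therefore no independent $(r-1)$-sets exist in $L(F(v))$, and the condition defining $k$-disconnectedness is satisfied vacuously for every $k$ and every hypergraph $\mathcal H$ of uniformity at least $3$.

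There is essentially no obstacle here: the only point requiring a moment's care is confirming that the witnessing vertices lie in the component $F(v)$, which is immediate from the structure of a $V(G)$-forest (constant second coordinate along directed paths). Everything else follows directly from the height $1$ assumption together with $r - 1 \ge 2$.
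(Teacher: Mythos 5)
Your proof is correct and matches the paper's argument: both note that for any $S$ of size at least $2$ in a height~$1$ star, the only common ancestor is the root, so any witnessing pair $v_S, v_T$ must both be the root and hence are comparable, making the definition hold vacuously. The extra remark about why the witnesses lie in $F(v)$ is a sensible clarification but not a substantive departure.
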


		\begin{proof}
			Note that any $S,T\subseteq L(F(v))$ of size at least $2$ are not independent (since $|S|\geq 2$, the only common ancestor of the vertices in $S$ is the root $v$, and similarly for $T$. Thus the only choice of $v_S, v_T$ for the definition of ``independent'' could be $v_S=v, v_T=v$, which does not satisfy ``$v_S, v_T$ incomparable''). Therefore, the definition of ``$k$-disconnected'' holds vacuously since there are no pairs $S,T$ for which the definition needs to be checked.
		\end{proof}
		In contrast to this there do not exist $1$-disconnected families of large height. This will be proved via the following two lemmas.

		\begin{lemma} \label{lem:trees-tight-paths}
			Let $r, s \ll h \ll d$ and let $\ell \le r$.
			Let $T$ be a balanced $d$-ary ordered tree of height $h$, and let $\HH$ be the $r$-uniform complete graph on $L(T)$.
			Then for every $s$-colouring of $\HH$ there exist disjoint sets $X, Y, Z \subseteq L(T)$ of size $\ell$, such that $X,Z$ are independent and of the same type, and $X \cup Y$ and $Y \cup Z$ are monochromatic cliques of the same colour.
		\end{lemma}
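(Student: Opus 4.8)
\emph{Step 1 — reduce to a clean colouring.} Since $d \gg h, r, s$, I would first repeatedly apply Lemma~\ref{lem:ramsey-trees}, once for each of the finitely many types $\tau \in \types(h,r)$ (with $S$ a fixed representative of $\tau$; note that $A^T(e)$ always contains the root of $T$, so $S$ is a single-component tree of height $h$ with $r$ leaves). Each application passes to a balanced $b$-ary ordered subtree $T' \le T$ of height $h$, with $b$ still enormous, on which all $r$-sets of leaves of type $\tau$ get a common colour; one checks, using Lemmas~\ref{Lemma_level_preserved_by_monomorphism} and~\ref{Lemma_A_preserved_by_monomorphisms}, that copies of $S$ in $T'$ are exactly the $r$-subsets of $L(T')$ of type $\tau$, and that this property persists under later shrinkings. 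After all $|\types(h,r)|$ rounds we have a subtree — which I relabel $T$ — on which the colour of an $r$-set $e \subseteq L(T)$ depends only on $\type_T(e)$; write it as $c(\tau)$.

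\emph{Step 2 — the configuration, and what remains.} Fix a level $g$ (to be chosen), a vertex $m$ at level $g$, three children $w_1 < w_2 < w_3$ of $m$, and for each $i$ let $\hat u_i$ be the vertex at level $h-1$ obtained from $w_i$ by always descending to the first child; the three subtrees hanging below $\hat u_1, \hat u_2, \hat u_3$ are ``parallel''. Let $X$ be some $\ell$ leaf-children of $\hat u_1$, let $Z$ be the corresponding $\ell$ leaf-children of $\hat u_2$, and let $Y$ be some $\ell$ leaf-children of $\hat u_3$; these are pairwise disjoint, $X$ and $Z$ are independent (witnessed by the incomparable $w_1, w_2$) and of the same type. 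Now for each $a$ in the range $\max(0,r-\ell) \le a \le \min(r,\ell)$, every $r$-subset of $X \cup Y$ using exactly $a$ elements of $X$ has one and the same type, $\tau^g_a$ (``a path of length $g$, then a vertex with two children, each followed by a first-child path of length $h-g-2$ and then $a$, respectively $r-a$, leaves''); and because $w_1, w_2$ both precede $w_3$, the $r$-subsets of $Y \cup Z$ realise the very same family of types $\{\tau^g_a\}$. Hence, after Step~1, it suffices to choose $g$ so that $c$ is \emph{constant} on $\{\tau^g_a : \max(0,r-\ell)\le a\le\min(r,\ell)\}$: its common value $\chi$ is then the common colour of the monochromatic cliques $X\cup Y$ and $Y\cup Z$. (When $\ell \le r/2$ this index set is a single value of $a$ — empty if $2\ell<r$, in which case the two cliques have no $r$-edges at all — so there is nothing to prove; the case $\ell = r-1$, which is the one used later, requires $c(\tau^g_1) = \dots = c(\tau^g_{r-1})$.)

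\emph{Step 3 — choosing $g$ (the main obstacle).} This last point is the crux, and it is exactly here that $h \gg r,s$ is used. The difficulty is genuine: cleanness gives no relations \emph{between} the distinct types $\tau^g_a$, so one cannot hope to collapse them locally, and one must instead extract the needed constancy from the sheer abundance of admissible levels $g$ (there are $\Theta(h)$ of them). My plan is to enrich the family of Step~2 — allowing the glue $Y$ to sit at various heights relative to the combs $X,Z$, and to be spread across several children of $m$ rather than clustered under $\hat u_3$ — so that each level $g$ contributes several configurations, and then to run a Ramsey/pigeonhole argument on the induced colouring of the levels (the colour-pattern of a level being a function only of that level) together with a chaining argument that uses overlaps between the type-families of different configurations to rule out the ``every configuration is bichromatic'' scenario once $h$ is large enough. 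Making this chaining precise is the technical heart of the argument; given it, Steps~1–2 package everything into the stated conclusion.
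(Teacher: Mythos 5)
Steps 1 and 2 of your proposal are sound as far as they go, and you correctly identify the crux: after cleaning, the colour of an $r$-set of leaves is a function of its type only, and for your ``flat'' configuration (both branches splitting from a common vertex $m$ at the same level $g$) you need the cleaned colour function to be \emph{constant} on the whole one-parameter family $\{\tau^g_a\}$ of types that arise from $r$-subsets of $X \cup Y$. You also correctly observe that cleanness gives no relation between distinct types, so this constancy is genuinely not free. But Step~3 is only a plan, not a proof: you say ``making this chaining precise is the technical heart of the argument'' and stop. As written there is no argument that a good level $g$ (or some enriched configuration) must exist, and it is far from obvious that such a level-by-level pigeonhole can be made to work. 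This is a real gap, and it sits precisely at the point where the paper does something different.

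The paper bypasses the obstacle entirely by choosing a \emph{tilted} configuration rather than a flat one. After cleaning to a binary $T'\le T$, it takes the path $v_0\dots v_h$ from the first leaf $v_0$ to the root and sets $u_i$ to be the last leaf of the subtree rooted at $v_i$; these $u_i$ are $h+1$ distinct leaves whose pairwise least common ancestors lie at all depths along the path, and they are increasing in the leaf order. Since $h\gg r,s,\ell$, one application of the hypergraph Ramsey theorem to the $r$-subsets of $\{u_0,\dots,u_h\}$ yields a monochromatic $A=\{w_1<\dots<w_{2\ell+1}\}$, say red. Taking $X=\{w_1,\dots,w_\ell\}$ and $Y=\{w_{\ell+2},\dots,w_{2\ell+1}\}$ makes $X\cup Y\subseteq A$ automatically a red clique; and taking $Z=\{w_1',\dots,w_\ell'\}$ to be the mirror of $X$ in the sibling subtree of the minimal subtree spanning $X$ (so that $X,Z$ are independent of the same type) gives, by the nested structure of the $w_i$, that $X\cup Y$ and $Y\cup Z$ have the same type as $2\ell$-sets. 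Cleanness then transfers ``red clique'' from $X\cup Y$ to $Y\cup Z$, with no need to collapse a family of distinct types. In short: the chain $\{u_0,\dots,u_h\}$ lets the classical Ramsey theorem absorb all the combinatorial work that your Step~3 would have to do by hand, and it works because the branching depths of $X$ and $Y$ relative to each other are nested rather than equal. I would revisit Step~2 with a chain of leaves whose ancestor branching points are strictly nested — once you have that, the rest falls out of a single Ramsey application, and the unfinished Step~3 disappears.
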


		\begin{proof}
			Fix an $s$-colouring of all $r$-subsets of $L(T)$.
			By Lemma~\ref{Lemma_cleaning_forest} applied to $T$, there is a balanced binary subtree $T' \le T$ of height $h$, such that $r$-sets of leaves of the same type have the same colour. 
			Let $v_0$ be the first vertex in $L(T')$ (according to the ordering of $T'$, inherited from $T$), and let $v_0 \dots v_h$ be the path from $v_0$ to the root of $T'$. 
			Let $u_i$ be the last leaf in the tree rooted at $v_i$ for $i \in \{0, \dots, h\}$. (See \Cref{fig:ramsey-tree} for an illustration of the vertices $v_i$ and $u_i$.)
			By Ramsey's theorem there is a subset $A \subseteq \{u_0, \dots, u_h\}$ of size $2\ell+1$ whose $r$-subsets all have the same colour, say red. Let $w_1, \dots, w_{2\ell + 1}$ be the vertices in $A$, in order. 
			
			\begin{figure}[h]
				\centering
				\includegraphics[scale = .7]{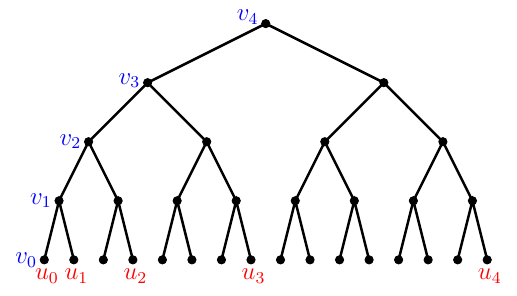}
				\hspace{.5cm}
				\includegraphics[scale = .7]{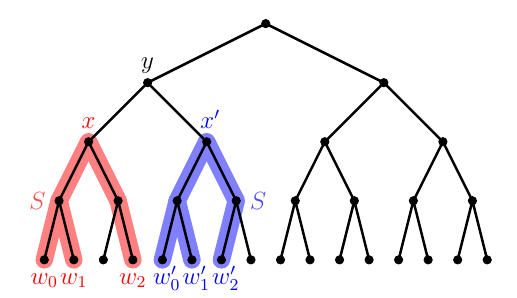}
				\vspace{-.3cm}
				\caption{An illustration of the sets $\{v_0, \dots, v_h\}$, $\{u_0, \dots, u_h\}$, $\{w_1, \dots, w_{\ell}\}$ and $\{w_1', \dots, w_h'\}$; here the leaves are ordered from left to right.}
				\label{fig:ramsey-tree}
			\end{figure}

			Let $S$ be the minimal ordered subtree of $T'$ whose leaves are $\{w_1, \dots, w_{\ell}\}$. Denote its root by $x$ and denote the father of $x$ by $y$. Let $x'$ be the child of $y$ which is not $x$, let $S'$ be a copy of $S$ rooted at $x'$, and denote the leaves of $S'$ by $\{w_1', \dots, w_{\ell}'\}$, noting that $\{w_1, \dots, w_{\ell}\}$ and $\{w_1', \dots, w_{\ell}'\}$ are independent (See \Cref{fig:ramsey-tree} for an illustration of the vertices $w_i$ and $w_i'$.) 

			Notice that $\{w_1, \dots, w_{\ell}, w_{\ell+2}, \dots, w_{2\ell + 1}\}$ and $\{w_1', \dots, w_{\ell}', w_{\ell+2}, \dots, w_{2\ell + 1}\}$ have the same type (we omitted $w_{\ell+1}$ as it may be a descendant of $y$, whereas $w_{\ell+1}, \dots, w_{2\ell+1}$ are not), so by choice of $w_1, \dots, w_{2\ell + 1}$ and $T'$ both sets are red cliques. Take $X = \{w_1, \dots, w_{\ell}\}$, $Y = \{w_{\ell+2}, \dots, w_{2\ell + 1}\}$ and $Z = \{w_1', \dots, w_{\ell}'\}$. These sets satisfy the requirements of the lemma.
		\end{proof}

		The following lemma is what gives a contradiction in our main proof.
		\begin{lemma}[Disconnection is bounded]\label{Lemma_disconnection_bounder}
			Let $r \le s \ll h \ll d$.
			Let $t \ge 0$ be an integer, let $G$ be a graph, and let $F$ be a $V(G)$-forest consisting of balanced $d$-ary trees of height at least $h$.
			Then there is no $s$-colouring of $\mathcal G^t\otimes_r F$ which is
			$1$-disconnected on $(G,F)$.
		\end{lemma}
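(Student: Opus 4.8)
The plan is to argue by contradiction. Suppose $\chi$ is an $s$-colouring of $\mathcal{G}^t\otimes_r F$ that is $1$-disconnected on $(G,F)$. I will produce, inside a single tree of $F$, a short monochromatic tight walk $P$ between two independent leaf-sets of the same type all of whose vertices live in that tree; then every vertex of $P$ has $\pi_0$ equal to the root of that tree, so $\|V(P)\|_G=0<1$, contradicting $1$-disconnectedness.

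First I would fix a tree $F(v)$ of $F$; it is a balanced $d$-ary tree of some height $h'\ge h$. Any $r$-set $e\subseteq L(F(v))$ satisfies $\|e\|_G=0\le t$ because all its elements have $\pi_0=v$, so by Observation~\ref{Observation_powers_distance_relation} the restriction of $\mathcal{G}^t\otimes_r F$ to $L(F(v))$ is the complete $r$-uniform hypergraph on $L(F(v))$, and $\chi$ restricts to an $s$-colouring of it. To be able to invoke Lemma~\ref{lem:trees-tight-paths} — whose proof uses the cleaning lemma (Lemma~\ref{Lemma_cleaning_forest}) and therefore needs $d\gg h$ — I would cut down to height exactly $h$: pick a vertex $u$ at level $h'-h$ of $F(v)$ and let $T_u$ be the subtree of $F(v)$ rooted at $u$, which is a balanced $d$-ary tree of height $h$ with $L(T_u)\subseteq L(F(v))$. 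The point of this reduction is that for every $S\subseteq L(T_u)$ the forest $A^{F(v)}(S)$ is simply $A^{T_u}(S)$ with a fixed length-$(h'-h)$ path attached above its root; hence $S,S'$ having equal type in $T_u$ forces equal type in $F(v)$, and $S,S'$ being independent in $T_u$ is the same as being independent in $F(v)$.

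Now I would apply Lemma~\ref{lem:trees-tight-paths} to $T_u$ with $\ell=r-1$ (allowed since $r\ge 3$ and $r\le s\ll h\ll d$): it produces disjoint $(r-1)$-sets $X,Y,Z\subseteq L(T_u)$ with $X,Z$ independent and of the same type, and with $X\cup Y$ and $Y\cup Z$ monochromatic cliques of a common colour, say red. The last step is to splice the two red cliques into one tight walk: order each of $X$, $Y$, $Z$ increasingly and concatenate, in the order $X$ then $Y$ then $Z$, to obtain a sequence $P$ on $3(r-1)\le 3r$ vertices. Every window of $r$ consecutive vertices of $P$ is contained either in $X\cup Y$ or in $Y\cup Z$ — it cannot meet both $X$ and $Z$, since these blocks are separated by the block $Y$ of size exactly $r-1$ — and since each of $X\cup Y$, $Y\cup Z$ is a red clique inside $L(F(v))$, every such window is a red edge of $\mathcal{G}^t\otimes_r F$. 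Moreover the first $r-1$ vertices of $P$ are the elements of $X$ in increasing order and the last $r-1$ are the elements of $Z$ in increasing order, so $P$ is a monochromatic tight walk from $X$ to $Z$ of length at most $3r$. As $V(P)\subseteq L(F(v))$ we get $\|V(P)\|_G=0$, so $(v,X,Z,P)$ violates the definition of $1$-disconnected on $(G,F)$, which is the contradiction we want.

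Since the combinatorial heart — finding $X$, $Y$, $Z$ — is entirely supplied by Lemma~\ref{lem:trees-tight-paths}, I do not expect a genuinely hard step here. The parts needing care are bookkeeping: checking that equality of type and independence really transfer from $T_u$ up to $F(v)$, and verifying that the concatenated sequence $P$ meets the precise definition of a tight walk (each $r$-window an edge, and start and end equal to the prescribed increasing $(r-1)$-tuples). The one point worth flagging is the choice $\ell=r-1$: it is exactly the size that makes the middle block $Y$ wide enough to prevent any $r$-window from touching both $X$ and $Z$, which is what lets the two red cliques be glued into a single monochromatic walk.
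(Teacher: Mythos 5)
Your proof is correct and follows essentially the same route as the paper: apply Lemma~\ref{lem:trees-tight-paths} inside a single tree of $F$ (on whose leaves $\mathcal G^t\otimes_r F$ is the complete $r$-graph), splice the two red cliques on $X\cup Y$ and $Y\cup Z$ into a tight walk $P$ of order $3(r-1)\le 3r$ from $X$ to $Z$, and use $\|V(P)\|_G=0$ to contradict $1$-disconnectedness. Your extra step of first passing to a height-$h$ subtree $T_u$ of $F(v)$, and checking that type and independence of leaf-subsets lift from $T_u$ back to $F(v)$, is a welcome bit of rigour that the paper glosses over (the paper applies Lemma~\ref{lem:trees-tight-paths} directly to $F(v)$, whose height is only guaranteed to be at least $h$, whereas that lemma's hierarchy $r,s\ll h\ll d$ is stated for trees of height exactly $h$).
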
 
		\begin{proof}
			Suppose, for contradiction, that we have a $1$-disconnected, $s$-colouring of $\G^t\otimes F$.
			Consider some $v\in V(G)$ and the corresponding tree $F(v)\in F$. By Lemma~\ref{lem:trees-tight-paths}, there are disjoint $(r-1)$-sets $X=\{x_1, \dots, x_{r-1}\}$, $Y=\{y_1, \dots, y_{r-1}\}$, $Z=\{z_1, \dots, z_{r-1}\}\subseteq L(F(v))$ such that $X,Z$ are independent and of the same type, and $X\cup Y$, $Y\cup Z$ induce monochromatic complete hypergraphs. Now $P=x_1 \dots x_{r-1} y_1 \dots y_{r-1} z_1 \dots z_{r-1}$ is a monochromatic tight path in $\G^t \otimes F$ from $X$ to $Z$. Notice that for all $x\in V(P)$ we have $\pi_0(x)=v$. This gives $||e||_G = 0$ for every set of $r$ consecutive vertices in $P$, showing that $P$ is indeed a tight path in $\G^t \otimes F$. It also gives $||V(P)||_{G}=0$, showing that $\G^t\otimes F$ is not $1$-disconnected on $(F, G)$.
		\end{proof}
		 
		The following allows us to keep track of distances when we modify one forest into another. 
		 \begin{definition}[Short trees]
			 For a graph $G$ and a $V(G)$-forest $F$, we say that level $i$ is $k$-short in $G$ if for all its edges $xy$ between level $i-1$ and level $i$, we have $d_G(\pi(x),\pi(y))\leq k$.
		\end{definition}
		Note that this is equivalent to ``for all $v\in L(F)$, we have $d_G(\pi_{i-1}^F(v), \pi_{i}^F(v))\leq k$''.
		Short forests have the property that for two vertices $x,y$ in them it is easy to estimate $d_G(x,y)$.

		\begin{lemma}\label{Lemma_distances_in_short_trees}
			Let $G$ be a graph, and $F$ a $V(G)$-forest whose $1,\dots, \max\{i,j\}$ levels are $k$-short. For any set $W\subseteq L(F)$ and vertices $u\in \pi_i^F(W)$ and $v\in \pi_j^F(W)$, we have  $d_G(u,v)\leq ||W||_{G}+ik+jk$.
		\end{lemma}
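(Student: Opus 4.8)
The plan is to bound $d_G(u,v)$ by three applications of the triangle inequality in $G$: a ``descent'' from $u$ down to a root of $F$, a ``cross'' step between two roots which is controlled by $||W||_{G}$, and an ``ascent'' back up to $v$; each descent/ascent is then bounded by telescoping along a root-to-leaf path and invoking $k$-shortness.

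First I would fix leaves $w, w' \in W$ with $u = \pi_i^F(w)$ and $v = \pi_j^F(w')$ (these exist since $u \in \pi_i^F(W)$, $v \in \pi_j^F(W)$ and $W \subseteq L(F)$), and recall that $\pi_0^F(w) = \pi_0(w)$ and $\pi_0^F(w') = \pi_0(w')$. Next I would use the remark immediately following the definition of $k$-short levels: since levels $1, \dots, \max\{i,j\}$ are $k$-short and $w, w'$ are leaves of $F$, we have $d_G(\pi_{\ell-1}^F(w), \pi_\ell^F(w)) \le k$ and $d_G(\pi_{\ell-1}^F(w'), \pi_\ell^F(w')) \le k$ for every $1 \le \ell \le \max\{i,j\}$. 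Telescoping the first of these along $\ell = 1, \dots, i$ gives
\[
d_G\big(\pi_0(w),\, u\big) \;=\; d_G\big(\pi_0^F(w),\, \pi_i^F(w)\big) \;\le\; \sum_{\ell=1}^{i} d_G\big(\pi_{\ell-1}^F(w),\, \pi_\ell^F(w)\big) \;\le\; ik,
\]
and symmetrically $d_G(\pi_0(w'),\, v) \le jk$.

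To finish, I would note that $w, w' \in W$, so the definition of $||W||_{G}$ gives $d_G(\pi_0(w), \pi_0(w')) \le ||W||_{G}$, and then a final triangle inequality yields
\[
d_G(u,v) \;\le\; d_G\big(u,\, \pi_0(w)\big) + d_G\big(\pi_0(w),\, \pi_0(w')\big) + d_G\big(\pi_0(w'),\, v\big) \;\le\; ik + ||W||_{G} + jk .
\]
I do not expect any real obstacle here; the points that need a moment's care are that the $k$-shortness hypothesis is a statement about \emph{all} leaves of $F$ (so it legitimately applies to $w$ and $w'$), that requiring levels $1, \dots, \max\{i,j\}$ to be $k$-short is precisely what lets both telescoping sums run, and that when $i = 0$ (resp.\ $j=0$) the corresponding sum is empty so the bound holds trivially in that case.
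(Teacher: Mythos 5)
Your proof is correct and takes essentially the same approach as the paper: fix leaves $w, w' \in W$ with $u$ and $v$ on their ancestor chains, telescope the $k$-shortness bound down to the roots to get $d_G(\pi_0(w), u) \le ik$ and $d_G(\pi_0(w'), v) \le jk$, bound the root-to-root distance by $||W||_G$, and conclude by the triangle inequality. The only differences from the paper's proof are notational (the paper writes the ancestor chain as an explicit root-to-leaf path $(u_0,x)\dots(u_h,x)$ rather than via $\pi_\ell^F$).
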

		\begin{proof}
			Let $P=(u_0,x)(u_1,x), \dots, (u_{h},x)$ be a root-leaf path in $F$ such that $u_i = u$ and $(u_h, x) \in W$ (this exists by assumption on $u$). 
			Since levels $1, \dots i$ are $k$-short, we have that $d_G(u_{t-1}, u_{t})\leq k$ for all $t\leq i$, which gives $d_G(x,u)=d_G(u_0, u_i)\leq ik$.
			Similarly, there is a root-leaf path $Q = (v_0, y)(v_1, y), \dots, (v_h, y)$ such that $v_j = v$ and $(v_h, y) \in Q$, so $d_G(y,v) \le jk$.

			Also, since $(u_h,x), (v_h, y)\in W$, we have $d_G(x,y)=d_G\big(\pi_0(u_h,x), \pi_0(v_h, y)\big)\leq ||W||_G$. Using the triangle inequality, we get $d_G(u,v)\leq d_G(u,x)+d_G(x,y)+ d_G(y,v) \le ||W||_G+ik+jk$.
		\end{proof}

		The hypergraph we really care about is $\mathcal G^p$. The following is how we relate it to the hypergraphs $\mathcal G^t\otimes_r F$ for which all our machinery works.
		\begin{lemma}\label{Lemma_short_separated_homomorphism}
			Let $p\gg t,h,k,r$. 
			Let $G$ be a graph and $F$ a $V(G)$-forest of height at most $h$ that is $t$-separated on $G$ and whose levels are all $k$-short on $G$. Then $\pi: \mathcal G^t\otimes_r F \to \mathcal G^p$ maps edges to edges.
		\end{lemma}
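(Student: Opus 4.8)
The plan is to simply unwind the definitions and reduce everything to Lemma~\ref{Lemma_distances_in_short_trees} together with $t$-separatedness. Fix an edge $e=\{v_1,\dots,v_r\}$ of $\mathcal G^t\otimes_r F$. By Observation~\ref{Observation_powers_distance_relation} this means $e\subseteq L(F)$ and $||e||_G\le t$, i.e.\ $d_G(\pi_0(v_a),\pi_0(v_b))\le t$ for all $a,b\in[r]$. To show that $\pi(e)=\{\pi(v_1),\dots,\pi(v_r)\}$ is an edge of the $r$-uniform hypergraph $\mathcal G^p$, I would prove two things: (i) the vertices $\pi(v_1),\dots,\pi(v_r)$ are pairwise distinct, so that $\pi(e)$ is genuinely an $r$-set; and (ii) $d_G(\pi(v_a),\pi(v_b))\le p$ for all $a,b$.

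For (i), fix $a\ne b$ and split into two cases. If $\pi_0(v_a)=\pi_0(v_b)$, then $v_a$ and $v_b$ are distinct vertices of the single tree $F(\pi_0(v_a))$, hence distinct elements of $V(G)\times V(G)$ with equal second coordinate, so their first coordinates $\pi(v_a),\pi(v_b)$ differ. If $\pi_0(v_a)\ne\pi_0(v_b)$, then since $d_G(\pi_0(v_a),\pi_0(v_b))\le t$ and $F$ is $t$-separated on $G$, we have $\pi(F(\pi_0(v_a)))\cap\pi(F(\pi_0(v_b)))=\emptyset$; as $\pi(v_a)$ lies in the first set and $\pi(v_b)$ in the second, they differ. (Equivalently, one may just quote the reformulation of $t$-separatedness stated right after its definition.) For (ii), let $h'\le h$ be the height of $F$; since $F$ is balanced, every leaf lies on level $h'$, so $\pi(v_a)=\pi^F_{h'}(v_a)$ for each $a$. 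Fixing $a,b$ and setting $W=\{v_a,v_b\}\subseteq L(F)$, and using that all levels of $F$ (in particular levels $1,\dots,h'$) are $k$-short on $G$, Lemma~\ref{Lemma_distances_in_short_trees} applied with $i=j=h'$ gives $d_G(\pi(v_a),\pi(v_b))\le ||W||_G+2h'k$. Finally $||W||_G=d_G(\pi_0(v_a),\pi_0(v_b))\le ||e||_G\le t$ and $h'\le h$, so $d_G(\pi(v_a),\pi(v_b))\le t+2hk$, which is at most $p$ provided $p\ge t+2hk$ (guaranteed by $p\gg t,h,k,r$). Combining (i) and (ii), $\pi(e)$ is an edge of $\mathcal G^p$.

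I do not expect a genuine obstacle here: once Lemma~\ref{Lemma_distances_in_short_trees} is available, the argument is bookkeeping. The only points that actually require the hypotheses are that $k$-shortness of all levels bounds the $G$-distance from a leaf's image to the root of its tree, that $||e||_G\le t$ bounds the $G$-distance between the roots of the (at most $r$) trees meeting $e$, and that $t$-separatedness — not merely the $S$-forest structure — is what keeps the images of the leaves of $e$ distinct when those leaves lie in different trees. Everything else is the triangle inequality and the choice $p\gg t,h,k,r$.
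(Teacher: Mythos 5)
Your proof is correct and follows essentially the same route as the paper's: use Observation~\ref{Observation_powers_distance_relation} to get $\lVert e\rVert_G\le t$, apply Lemma~\ref{Lemma_distances_in_short_trees} to bound $d_G(\pi(v_a),\pi(v_b))\le t+2hk\le p$, and use $t$-separatedness (with the case split on $\pi_0(v_a)=\pi_0(v_b)$ versus $\neq$) to get distinctness of the images. The only cosmetic difference is that you explicitly pin down the level $h'$ of the leaves before invoking Lemma~\ref{Lemma_distances_in_short_trees}, which the paper leaves implicit.
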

		\begin{proof}
			Let $\{v_1, \dots, v_r\}\in E(\mathcal G^t\otimes F)$. By Observation~\ref{Observation_powers_distance_relation}, we have $||\{v_1, \dots, v_r\}||_{G}\leq t$, or, equivalently, $d_G(\pi_0(v_i), \pi_0(v_j)) \le t$ for every $i,j \in [r]$. Lemma~\ref{Lemma_distances_in_short_trees} implies $d_G(\pi(v_i), \pi(v_j))\leq t+2hk\leq p$ for all $i,j$. 
			Also, by $F$ being $t$-separated on $G$, we have $\pi(v_i)\neq \pi(v_j)$ for all distinct $i,j$ (indeed, write $u = \pi_0(v_i)$ and $v = \pi_0(v_j)$. If $u = v$ then $\pi(v_i) \neq \pi(v_j)$ because otherwise $v_i = v_j$. If $u \neq v$, since $d_G(u,v) \le t$ and $F$ is $t$-separated on $G$, then $\pi(F(u)) \cap \pi(F(v)) = \emptyset$). We have shown that $\{\pi(v_1), \dots, \pi(v_r)\}$ is a set of $r$ vertices in $G$ within distance $p$ of each other in $G$, i.e.\ an edge of $\mathcal G^p$.
		\end{proof}

	\subsection{Augmentations}\label{Section_augmentation}
		In this section we take a balanced forest $F$ of height $h$, and build a balanced forest of height $h+1$.  

		The following definition takes an $S$-forest and yields another $S$-forest with the roots of the former removed.
		\begin{definition}
			For an $S$-forest $F$ the \emph{root-deletion function} is the function $\rd^F:V(F)\setminus\{(x,x):x\in S\}\to S\times S$ by $\rd^F:(x,y)\to \big(x, \pi^{F}_1((x,y))\big)$. 
		\end{definition}


		\begin{observation}\label{Observation_rd_doesnt_change_on_subforests}
			If $F\leq F'$ are two forests, then restricting $\rd^{F'}$ to $F$ gives the function $\rd^F$, i.e.\ $\rd^{F'}|_F=\rd^F$.
		\end{observation}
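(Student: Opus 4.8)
The plan is to unwind the definition of $\rd^F$ and $\rd^{F'}$ and check that they agree on the common domain, where the only subtle point is that $\pi^{F'}_1$ and $\pi^F_1$ agree on vertices of $F$.

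First I would fix $F \le F'$, which by definition means $F$ is an ordered subforest of $F'$ with the same height and the same root set. In particular every vertex $(x,y) \in V(F)$ is a non-root of $F$ exactly when it is a non-root of $F'$ (the root sets coincide), so the domains of $\rd^F$ and $\rd^{F'}$ restricted to $V(F)$ are literally the same set, namely $V(F) \setminus \{(x,x) : x \in S\}$. Fix such a vertex $v = (x,y)$. Both functions send $v$ to $\big(x, \pi^{F}_1(v)\big)$ and $\big(x, \pi^{F'}_1(v)\big)$ respectively, so it suffices to show $\pi^F_1(v) = \pi^{F'}_1(v)$.

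Next I would argue that the level-$1$ ancestor of $v$ is the same whether computed in $F$ or in $F'$. Let $r$ be the root of the component of $F$ containing $v$, and let $r = v_0 v_1 \dots v_\ell = v$ be the root-to-$v$ path in $F$; since $F \le F'$, every edge $v_{i-1}v_i$ is an edge of $F'$, so this is also a path in $F'$. Because $F$ and $F'$ are balanced forests of the same height, Lemma~\ref{Lemma_level_preserved_by_monomorphism} (applied to the inclusion monomorphism $F \hookrightarrow F'$) gives $\level_{F'}(v_i) = \level_F(v_i) = i$; in particular $r = v_0$ is a root of $F'$ (it has level $0$), consistent with $F$ and $F'$ having the same root set, and $v_1$ is the level-$1$ ancestor of $v$ in $F'$. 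But $v_1$ is also the level-$1$ ancestor of $v$ in $F$ by construction. Hence $\pi^F_1(v) = \pi(v_1) = \pi^{F'}_1(v)$, where $\pi$ here denotes the projection to the first coordinate (which does not depend on the ambient forest).

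I do not expect any real obstacle here; the one thing to be careful about is not conflating ``level-$1$ ancestor'' with ``child of the root'' without justification, which is exactly what Lemma~\ref{Lemma_level_preserved_by_monomorphism} supplies. Combining the two displayed computations, $\rd^{F'}(v) = \big(x, \pi^{F'}_1(v)\big) = \big(x, \pi^F_1(v)\big) = \rd^F(v)$ for every $v$ in the domain, i.e.\ $\rd^{F'}|_F = \rd^F$, as claimed.
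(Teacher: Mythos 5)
The paper does not supply a proof for this Observation (it is stated without one), so there is no paper argument to compare against; your job was simply to verify the claim, and you have done so correctly. Your proof is a careful unwinding of the definitions: the domains agree because $F$ and $F'$ share a root set, and the values agree because the unique root-to-$v$ path in $F$ is also the unique root-to-$v$ path in $F'$, so the level-$1$ ancestor (and hence $\pi_1$) is computed identically in both. One small remark: invoking Lemma~\ref{Lemma_level_preserved_by_monomorphism} is slightly heavier machinery than necessary and requires both forests to be balanced (which holds in all relevant contexts, but is not literally stated in the Observation). You could avoid it by noting directly that since $v_0$ is a root of $F'$ (by the shared root set), and $v_0v_1\dots v_\ell$ is a path in $F'$ starting at a root, it is the unique root-to-$v$ path in $F'$, so $v_1$ is the level-$1$ ancestor of $v$ in $F'$ as well. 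Either way the conclusion is correct.
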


		The following is how we build trees of larger heights. Informally, what the next definition does is take disjoint trees $T(u_1), \dots, T(u_k)$, and adds one extra vertex $v$ that is connected to all their roots, to get a new tree rooted at $v$.
		\begin{definition}[Augmentation of a single tree]\label{Definition_augment_one_tree}
			Let $u_1, \dots, u_k, v$ be distinct elements in a set $S$, and let $T_1, \dots, T_k$ be trees, such that $\root(T_i) = \{u_i\}$ for $i \in [k]$, and the sets $\pi\big(V(T_1)\big), \dots, \pi\big(V(T_k)\big), \{v\}$ are pairwise disjoint. Define an $S$-tree, denoted by $(v; T_1; \dots; T_k)$, as follows:
			\begin{itemize}
				\item 
					First, for each $i$, define $T^*_i$ by 
					\begin{align*}
						V(T^*_i) & =\{(x,v):(x,u_i)\in V(T_i)\} \\
						E(T^*_i) & =\{(x,v)(y,v): (x,u_i)(y,u_i)\in E(T_i)\},
					\end{align*}
					and order each $V(T^*_i)$ by $(x,v)<(y,v)\iff (x,u_i)<(y, u_i)$. 
				\item 
					Then, order  $\bigcup_{i \in [k]}V(T^*_i)$ by $V(T^*_1)< \dots < V(T^*_k)$ thereby making the union $\bigcup_{i \in [k]}T^*_i$ an ordered forest.
				\item  
					Finally, define $(v; T_1; \dots; T_k)$ to have vertex set $V$ and edge set $E$, defined as follows.
					\begin{align*}
						V &= \{(v,v)\}\cup \bigcup_{i \in [k]}V(T^*_i), \\
						E &= \{(v,v)(u_i,v): i \in [k]\} \cup \bigcup_{i \in [k]}E(T^*_i),
					\end{align*}
					and extend the ordering of $\bigcup_{i\in[k]}V(T^*_i)$ to an ordering of $V$ by letting $(v,v)$ precede all other vertices.
			\end{itemize}
		\end{definition}

		 It is perhaps not obvious that the above produces an ordered tree --- this is proved in the following lemma. Recall that $F^-$ denotes the subforest of $F$ with all roots deleted. 

		\begin{lemma}\label{Lemma_augment_one_tree_properties}
			Let $F$ be an $S$-forest, and let $u_1, \dots, u_k\in \pi_0(V(F))$ be such that the sets $\pi\big(V(F(u_1))\big), \dots, \pi\big(V(F(u_k))\big), \{v\}$ are pairwise disjoint. Set $T= (v; F(u_1); \dots; F(u_k))$.
			Then $T$ is an $S$-tree and $\rd^T|_{T^-}:T^-\to F$ is a monomorphism.
		\end{lemma}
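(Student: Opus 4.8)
The plan is to unwind the construction in Definition~\ref{Definition_augment_one_tree} step by step, verifying first that $(v;F(u_1);\dots;F(u_k))$ is a legitimate $S$-tree and then that $\rd^T$ restricted to $T^-$ is an order-preserving edge-preserving injection into $F$. First I would check the structural conditions. The hypothesis that the sets $\pi(V(F(u_i)))$ and $\{v\}$ are pairwise disjoint is exactly what the definition $(v;T_1;\dots;T_k)$ requires (with $T_i = F(u_i)$, noting $\root(F(u_i)) = \{u_i\}$ by the observation that each $F(u_i)$ is a tree rooted at $(u_i,u_i)$). So the $S$-tree $T$ is well-defined. To see $T$ is genuinely an ordered tree: each $T_i^*$ is an ordered tree isomorphic (as an abstract ordered tree) to $F(u_i)$ via $(x,u_i)\mapsto(x,v)$, the disjointness of the $\pi$-projections guarantees the $V(T_i^*)$ are pairwise disjoint vertex sets, so $\bigcup_i T_i^*$ is an ordered forest (conditions \ref{itm:ordered-a}, \ref{itm:ordered-b} hold within each $T_i^*$ and hold across different $T_i^*,T_j^*$ because $V(T_i^*)<V(T_j^*)$ as blocks and vertices in different components are incomparable), and adjoining the new root $(v,v)$ below everything — connected to each $(u_i,v)$, and ordered first — produces a rooted tree whose order still satisfies \ref{itm:ordered-a} (every vertex is a descendant of $(v,v)$, and $(v,v)$ is least) and \ref{itm:ordered-b} (vacuous once there is a single component). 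One also checks the $S$-tree edge/root format: all edges of $T_i^*$ have the form $(x,v)(y,v)$, and the new edges $(v,v)(u_i,v)$ have second coordinate $v$, while the unique root is $(v,v)$; so $T$ is an $S$-tree, and $\level(x,v)$ in $T$ is one more than $\level(x,u_i)$ in $F(u_i)$.

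Next I would analyse $\rd^T$ on $T^- = T \setminus \{(v,v)\}$. By definition $\rd^T(x,v) = (x,\pi_1^T((x,v)))$. For a vertex $(x,v)\in V(T_i^*)$, its level-$1$ ancestor in $T$ is $(u_i,v)$ (the root of $T_i^*$, which is the unique child of $(v,v)$ lying above $(x,v)$), so $\pi_1^T((x,v)) = u_i$, giving $\rd^T(x,v) = (x,u_i)$. Since $(x,v)\in V(T_i^*)$ corresponds to $(x,u_i)\in V(F(u_i))\subseteq V(F)$, the map $\rd^T|_{T^-}$ is precisely the bijection $V(T^-) \to \bigcup_i V(F(u_i))$ sending $(x,v)\mapsto(x,u_i)$, which is injective because the $V(F(u_i))$ are pairwise disjoint (a consequence of the disjointness of the $\pi$-projections, since $\pi_0$ of a vertex of $F(u_i)$ is $u_i$ and the $u_i$ are distinct). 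It remains to check this is a homomorphism into $F$: it maps edges to edges because an edge $(x,v)(y,v)$ of $T_i^*$ comes from an edge $(x,u_i)(y,u_i)$ of $F(u_i)$, which is an edge of $F$; and it preserves order because within each $T_i^*$ the order was defined by $(x,v)<(y,v)\iff(x,u_i)<(y,u_i)$, while across components the block order $V(T_1^*)<\dots<V(T_k^*)$ in $T$ matches the relative order of $F(u_1),\dots,F(u_k)$ — here I would invoke that in $F$, incomparable vertices of different trees $F(u_i),F(u_j)$ are ordered consistently with \ref{itm:ordered-b}, and that the definition of $(v;T_1;\dots;T_k)$ lists the $T_i^*$ in the order $1,\dots,k$ which we are free to take to be the order inherited from $F$. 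Thus $\rd^T|_{T^-}:T^-\to F$ is an order-preserving, edge-preserving injection, i.e.\ a monomorphism.

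The one point needing a little care — and the step I expect to be the main obstacle — is the order-preservation of $\rd^T|_{T^-}$ \emph{across} different components $T_i^*$ and $T_j^*$. Within a component it is immediate from the definition of the ordering on $V(T_i^*)$. But to compare $(x,v)\in V(T_i^*)$ with $(y,v)\in V(T_j^*)$ for $i<j$, one needs that the corresponding original vertices satisfy $(x,u_i)<(y,u_j)$ in $F$. This follows from forest axiom~\ref{itm:ordered-b}: $(u_i,u_i)$ and $(u_j,u_j)$ are incomparable roots of $F$ (they are roots of distinct trees), and — assuming we have enumerated $u_1,\dots,u_k$ so that $(u_1,u_1)<\dots<(u_k,u_k)$, which the augmentation definition permits — axiom~\ref{itm:ordered-b} then forces every descendant of $(u_i,u_i)$ to precede every descendant of $(u_j,u_j)$, i.e.\ $(x,u_i)<(y,u_j)$, as needed. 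A symmetric remark handles the convention inside Definition~\ref{Definition_augment_one_tree}; so once the enumeration is fixed compatibly with the order on $F$, all the order checks go through, and the lemma follows.
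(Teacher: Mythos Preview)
Your proposal is correct and follows essentially the same approach as the paper: both identify the map $(x,v)\mapsto(x,u_i)$ as the inverse of the relabelling isomorphism $\phi:\bigcup_i F(u_i)\to\bigcup_i T_i^*$, compute $\pi_1^T((x,v))=u_i$ to see that this map equals $\rd^T|_{T^-}$, and check the ordered-tree axioms for the new root $(v,v)$ (noting that \ref{itm:ordered-b} is vacuous for $(v,v)$ because it is comparable with everything---your phrase ``vacuous once there is a single component'' is a slight misstatement of this, since incomparable pairs still exist in a tree). You are actually more careful than the paper about the cross-component order-preservation, making explicit that one should enumerate $u_1,\dots,u_k$ so that $(u_1,u_1)<\dots<(u_k,u_k)$ in $F$; the paper simply asserts $\phi$ is an isomorphism without flagging this.
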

		\begin{proof}
			With $T_i := F(u_i)$ and $T^*_i$ as in Definition~\ref{Definition_augment_one_tree}, note that $\phi:\bigcup_{i\in [k]} F(u_i)\to \bigcup_{i \in [k]}T^*_i$ defined by $\phi((x, u_i)) = (x,v)$ is an isomorphism.
			This shows that $\bigcup_{i \in [k]}T^*_i$ is an ordered forest with roots $(u_1, v), \dots, (u_k, v)$. The vertex $(v,v)$ is joined to exactly these vertices making $T$ a rooted tree. To see that the vertices of $T$ are ordered correctly we need to check that \ref{itm:ordered-a} and \ref{itm:ordered-b} hold when one of the vertices involved is $(v,v)$ (since we have already established that $T\setminus \{v\}$ is an ordered forest). Part \ref{itm:ordered-a} holds for $(v,v)$ since $v$ is an ancestor of everything and also ordered to be before everything.
			Part \ref{itm:ordered-b} does not need to be checked for $(v,v)$ since it concerns pairs of incomparable vertices --- but $(v,v)$ is comparable with everything.

			For the second part of this lemma, note that for $(x,v)\in V(T^*_i)$, we have that $\pi_1^{T}((x,v))=u_i$ (the vertex $(u_i,v)$ is on level 1 of $T$ because it is a child of the root $(v,v)$, also $(u_i,v)$ is an ancestor of $(x,v)$ since $(x,v)\in T^*_i$). This shows that for $(x,v)\in V(T^*_i)$ we have $\rd^T((x,v))=(x, u_i)=\phi^{-1}(x,v)$ --- and hence the function from $\bigcup_{i\in[k]}T^*_i$ to $\bigcup_{i\in[k]}F(u_i)$ given by $\rd^T$ is an isomorphism. Since $T^-= \bigcup_{i \in [k]}T^*_i$, we get that $\rd^T|_{T^-}:T^-\to F$ is a monomorphism.
		\end{proof}

		The following lemma works out how distances change with the above operation.
		\begin{lemma}\label{Lemma_Augmentation_shortness}
			For a graph $G$, let $F$ be a balanced $V(G)$-forest of height $h$, and let $u_1, \dots, u_k\in \pi_0(V(F))$ be chosen so that $\pi\big(V(F(u_1))\big), \dots, \pi\big(V(F(u_k))\big), \{v\}$ are pairwise disjoint. Set $T=(v; F(u_1); \dots; F(u_k))$. 
			For $i \in [h]$, if level $i$ of each $F(u_1), \dots, F(u_k)$ is $k$-short, then level $i+1$ of $T$ is $k$-short.
			Moreover, if $d(v, u_1), \dots, d(v, u_k)\leq k$, then level 1 of $T$ is $k$-short. 
		\end{lemma}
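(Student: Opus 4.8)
The plan is to reduce the statement to the leaf-based reformulation of $k$-shortness recorded just after the definition of short trees: level $j$ of a $V(G)$-forest is $k$-short precisely when $d_G(\pi_{j-1}(w),\pi_j(w))\le k$ for every leaf $w$. So it suffices to understand, for each leaf $w$ of $T$, the projections $\pi^T_i(w)$ and to relate them to projections inside the trees $F(u_1),\dots,F(u_k)$. Here the key structural input is \Cref{Lemma_augment_one_tree_properties} (and the description of $T$ in \Cref{Definition_augment_one_tree}): every leaf of $T$ has the form $w=(x,v)$ with $(x,u_\ell)\in L(F(u_\ell))$ for a \emph{unique} $\ell\in[k]$, the subtrees $T^*_1,\dots,T^*_k$ are vertex-disjoint, and $\phi\colon\bigcup_i F(u_i)\to\bigcup_i T^*_i$, $(x,u_i)\mapsto(x,v)$, is an isomorphism of ordered forests.

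First I would record the effect of augmentation on the projection maps. In $T$, the new root $(v,v)$ is the parent of each root $(u_\ell,v)$ of $T^*_\ell$, so the ancestors of a leaf $w=(x,v)$ of $T$ are $(v,v)$ together with the $\phi$-images of the ancestors of $(x,u_\ell)$ in $F(u_\ell)$; inserting $(v,v)$ shifts every level down by exactly one. Since $\pi$ only reads off the first coordinate and $\phi$ preserves first coordinates, this gives
\[
\pi^T_{j+1}(w)=\pi^{F(u_\ell)}_{j}\big((x,u_\ell)\big)\quad\text{for }0\le j\le h,\qquad \pi^T_0(w)=v ,
\]
where the second equality just says $\pi_0(w)$ is the second coordinate of $w$ (in particular $\pi^T_1(w)=u_\ell$).

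Both claims then follow immediately. For the first, fix $i\in[h]$ and a leaf $w=(x,v)$ of $T$; by the displayed identity,
\[
d_G\big(\pi^T_i(w),\pi^T_{i+1}(w)\big)=d_G\big(\pi^{F(u_\ell)}_{i-1}((x,u_\ell)),\pi^{F(u_\ell)}_{i}((x,u_\ell))\big)\le k,
\]
the last inequality being the hypothesis that level $i$ of $F(u_\ell)$ is $k$-short; as $w$ (hence $\ell$) was arbitrary, level $i+1$ of $T$ is $k$-short. For the ``moreover'' part, any leaf $w=(x,v)$ of $T$ satisfies $d_G(\pi^T_0(w),\pi^T_1(w))=d_G(v,u_\ell)\le k$ by hypothesis, so level $1$ of $T$ is $k$-short.

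There is no genuine obstacle here beyond bookkeeping: the only two points needing care are the uniform index shift between levels of $F(u_\ell)$ and levels of $T$ caused by inserting $(v,v)$, and the observation that the single edge $(v,v)(u_\ell,v)$ — the only edge of $T$ not living inside some $T^*_\ell$ — is exactly the edge governed by the ``moreover'' hypothesis and not by the first statement.
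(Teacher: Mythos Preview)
Your proof is correct and follows essentially the same approach as the paper's: both arguments use the isomorphism $\phi\colon\bigcup_i F(u_i)\to\bigcup_i T^*_i$ from \Cref{Lemma_augment_one_tree_properties} to track the one-level shift caused by inserting the new root $(v,v)$, and then read off $k$-shortness. The only cosmetic difference is that the paper works directly with the edge-based definition of $k$-shortness (comparing the maxima over edges between consecutive levels in $T^*_i$ and in $F(u_i)$), whereas you use the equivalent leaf-based reformulation via $\pi^T_j(w)$; the content is the same.
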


		\begin{proof}
			For the first part, note that from Definition~\ref{Definition_augment_one_tree}, with $T_i = F(u_i)$ and $T_i^*$ as in \Cref{Definition_augment_one_tree}, we get that vertices of $T$ on level $i$ are the vertices of $\bigcup_{i\in[k]}T^*_i$ on level $i-1$. Let $\phi : \bigcup_{i \in [k]}F(u_i) \to \bigcup_{i \in [k]}T^*_i$ be the function from the proof of the previous lemma, which was shown to be an isomorphism. For a vertex $x$ in $\bigcup_{i\in[k]}T^*_i$, we have $\phi^{-1}(x)$ is a vertex of $\bigcup_{i\in[k]}F(u_i)$ with $\level(x)= \level(\phi^{-1}(x))$ (by Lemma~\ref{Lemma_level_preserved_by_monomorphism}). Thus 
			\begin{align*}
				& \max\{d_G\big(\pi(x),\pi(y)\big): \text{$xy\in E(T^{*}_i)$, with $x$ on level $i-1$ and $y$ on level $i$}\} \\
				=\, & \max\{d_G\big(\pi(x),\pi(y)\big): \text{$xy\in E(F(u_i))$, with $x$ on level $i-1$ and $y$ on level $i$}\} \leq k.
			\end{align*}
			This shows that level $i$ in $\bigcup_{i \in [k]}T_i^*$ is $k$-short, implying that level $i+1$ in $T$ is $k$-short, for every $i \in [h]$.

			For the second part note that from Definition~\ref{Definition_augment_one_tree}, the only edges of $T$ from level 0 to level 1 are the edges $\{(v,v)(u_i,v): i \in [k]\}$, and $d_G(v,u_i)\leq k$, showing that level 1 is $k$-short.
		\end{proof}

		Roughly speaking, we think of one forest $F'$ is an augmentation of another forest $F$ if $F'\leq F''$ for some forest $F''$, whose trees are constructed from trees of $F$ via Definition~\ref{Definition_augment_one_tree}. This informal definition is cumbersome to work with, so we instead work with the following definition via the $\rd$ function. 

		\begin{definition}[Augmentation of tree families]
			Let $F, F'$ be $S$-forests. 
			Say $F'$ \emph{augments} $F$ if every $v\in S$ satisfies $\rd^{F'}(F'(v)^-)\subseteq V(F)$  and moreover $\rd^{F'}|_{F'(v)^-}:F'(v)^- \to F$ is a monomorphism (see \Cref{fig:augment}).
		\end{definition}
		\begin{figure}[h]
			\centering
			\includegraphics[scale = 1]{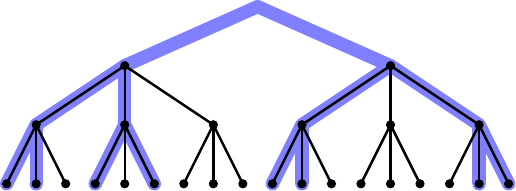}
			\caption{An illustration of a forest $F'$ (in blue) which is an augmentation of a forest $F$ (in black)}
			\label{fig:augment}
		\end{figure}

		The following lemma gives us the properties of augmentation we use.
		\begin{lemma}[Properties of augmentation]\label{Lemma_augmentation_properties}
			Let $G$ be a graph and $F, F'$ be $V(G)$-forests with $F'$ augmenting $F$. The following are true.
			\begin{enumerate}[label = \rm(\roman*)]
				\item \label{itm:augment-a}
					$\pi^{F}\circ \rd^{F'} = \pi^{F'}$ and $\pi_1^{F'}|_{F'^-} = \pi_0^F\circ \rd^{F'}$.
				\item \label{itm:augment-d}
					If level 1 of $F'$ is $k$-short on $G$, then for any $S\subseteq L(F)$, we have $||S||_{G}\geq ||\rd^{F'}(S)||_{G}-2k.$  
				\item \label{itm:augment-e}
					If $t \ge s+2k$, $F'$ is $s$-separated on $G$, and its level 1 is $k$-short on $G$, then the function
					$\rd^{F'}:\mathcal G^s\otimes_r F'\to \mathcal G^t\otimes_r F$ maps edges to edges.
			\end{enumerate}
		\end{lemma}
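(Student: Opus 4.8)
The plan is to deduce all three items by unwinding the definition of the root-deletion function, which sends $(x,y)\mapsto\big(x,\pi_1^{F'}((x,y))\big)$; in particular it preserves first coordinates and records in the second coordinate the first coordinate of the level-$1$ ancestor. Part (i) is then immediate: $\pi^{F}$ and $\pi^{F'}$ just read off a vertex's first coordinate and $\pi_0^{F}$ its second (none of which depends on the forest), so for every $v$ in the domain of $\rd^{F'}$ --- that is, every non-root vertex of $F'$, which augmentation sends into $V(F)$ --- one has $\pi^{F}(\rd^{F'}(v))=\pi^{F'}(v)$ and $\pi_0^{F}(\rd^{F'}(v))=\pi_1^{F'}(v)$.

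For part (ii), I would take $S\subseteq L(F')$ (so that $\rd^{F'}(S)$ is defined), use part (i) to rewrite $||\rd^{F'}(S)||_{G}=\max_{w,w'\in S}d_G\big(\pi_1^{F'}(w),\pi_1^{F'}(w')\big)$, and then apply the triangle inequality: since level $1$ of $F'$ is $k$-short we have $d_G(\pi_0(w),\pi_1^{F'}(w))\le k$ for every $w$, so $d_G(\pi_1^{F'}(w),\pi_1^{F'}(w'))\le d_G(\pi_0(w),\pi_0(w'))+2k\le ||S||_{G}+2k$; maximising over $w,w'$ gives $||\rd^{F'}(S)||_{G}\le ||S||_{G}+2k$, which rearranges to the claim.

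For part (iii), I would start from an edge $e=\{w_1,\dots,w_r\}$ of $\mathcal G^s\otimes_r F'$, which by Observation~\ref{Observation_powers_distance_relation} satisfies $||e||_{G}\le s$, and check that $\rd^{F'}(e)$ is an $r$-element subset of $L(F)$ with $||\rd^{F'}(e)||_{G}\le t$. The distance bound is immediate from part (ii), as $||\rd^{F'}(e)||_{G}\le ||e||_{G}+2k\le s+2k\le t$. For the other two points: the first coordinates $\pi(w_1),\dots,\pi(w_r)$ are pairwise distinct --- two of them with equal $\pi_0$-value would be distinct vertices of a common $S$-tree $F'(v)$ and so differ in first coordinate, while if their $\pi_0$-values differ those values are within $G$-distance $||e||_{G}\le s$ and $s$-separation of $F'$ makes the corresponding $\pi$-sets disjoint --- and since $\rd^{F'}$ preserves first coordinates the images $\rd^{F'}(w_i)$ are distinct as well; moreover each $\rd^{F'}(w_i)$ lies in $L(F)$, since $w_i$ is a leaf of the balanced forest $F'(\pi_0(w_i))^-$, which has the same height as $F$, and $\rd^{F'}$ restricted to $F'(\pi_0(w_i))^-$ is a monomorphism into $F$, so Lemma~\ref{Lemma_level_preserved_by_monomorphism} sends $w_i$ to a vertex of maximal level in $F$, i.e.\ a leaf. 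Hence $\rd^{F'}(e)$ is an edge of $\mathcal G^t\otimes_r F$ by Observation~\ref{Observation_powers_distance_relation} again.

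None of this is conceptually deep --- the whole lemma is bookkeeping with the definitions --- so I do not anticipate a genuine obstacle. The only part demanding care is (iii), where one must at the same time rule out $\rd^{F'}$ merging two vertices of an edge (precisely where $s$-separation enters, to force distinct first coordinates within an edge) and check that leaves map to leaves (where one uses level-preservation for the restriction of $\rd^{F'}$ to $F'(v)^-$, together with the fact that this forest has the same height as $F$); once both are secured, the distance bound for (iii) is free from (ii).
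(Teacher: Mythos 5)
Your proposal is correct and follows essentially the paper's route for all three parts. Two remarks worth recording. First, for part~\ref{itm:augment-d} you rightly read ``$S\subseteq L(F)$'' as the typo ``$S\subseteq L(F')$'' (otherwise $\rd^{F'}(S)$ is undefined), and your triangle-inequality argument is the correct proof; the paper's own proof block labelled (ii) actually argues $\type_F(\rd^{F'}(X))=\tau^-$, an orphaned argument for a type-preservation item that appears to have been deleted from the statement but not from the proof, so you have in fact supplied the proof the paper omits. Second, in part~\ref{itm:augment-e} you make explicit the point --- left implicit in the paper --- that $\rd^{F'}$ carries leaves of $F'$ to leaves of $F$; your argument invokes Lemma~\ref{Lemma_level_preserved_by_monomorphism} together with the unstated (but always true in the paper's applications) fact that $F'(v)^-$ and $F$ have the same height. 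Since $V(G)$-forests are balanced and $\rd^{F'}|_{F'(v)^-}$ is a monomorphism into $F$, if $F'(v)^-$ were strictly shorter than $F$ the image would avoid $L(F)$, and the containment $\rd^{F'}(e)\subseteq L(F)$ would fail; this hypothesis is genuinely needed and is worth stating, but the gap is shared with (and more hidden in) the paper's own proof, which simply asserts $\rd^{F'}(e)$ is contained in an edge of $\mathcal G^t\otimes F$ without commenting on leaves. The remainder --- the injectivity argument via $s$-separation splitting on whether $\pi_0(u)=\pi_0(v)$ --- matches the paper's reasoning essentially word for word.
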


		\begin{proof}
			\hfill
			\begin{enumerate}[label = \rm(\roman*)]
				\item 
					The equality $\pi^{F}\circ \rd^{F'}=\pi^{F'}$ is immediate from the fact that $\pi$ is always just a projection on the first coordinate and since $\rd$ does not change the first coordinate. Meanwhile, $\pi_1^{F'}|_{F'^-}=\pi_0^F\circ \rd^{F'}$ is immediate from the definition of $\rd^{F'}$.
				\item   
					By definition, $\tau=\type_{F'}(X)$ is the isomorphism class of $A^{F'}(X)$, and $\type_{F}(\rd^{F'}(X))$  is the isomorphism class of $A^F(\rd^{F'}(X))$. 
					By \Cref{Lemma_A_preserved_by_monomorphisms}, since $\rd^{F'}|_{F'(v)^-}:F'(v)^-\to F$ is a monomorphism, we have $A^{F}(\rd^{F'}(X)) = \rd^{F'}(A^{F'(v)^-}(X)) = \rd^{F'}(A^{F'^-}(X))$, so $\type_{F}(\rd^{F'}(X))$ is the isomorphism class of $A^{F'^-}(X)$.
					By Lemma~\ref{Lemma_A_preserved_by_minus}, we also have $A^{F'^-}(X) = A^{F'}(X)^-$, and by definition of $\tau^-$, the isomorphism class of $A^{F'}(X)^-$ is $\tau^-$. Altogether, $\type_F(\rd^{F'}(X)) = \tau^-$, as required.

				\item 
					Recall that edges of $\mathcal G^s\otimes F'$ are the $r$-sets $e\subseteq L(F')$ which have $||e||_{G}\leq s$. Part \ref{itm:augment-d} shows that $||\rd^{F'}(e)||_{G}\leq s+2k\leq t$ --- therefore $\rd^{F'}(e)$ is contained in an edge of $\mathcal G^t\otimes F$. Moreover, since $F'$ is $s$-separated, every edge $e\in \mathcal G^s\otimes F'$ satisfies $|\pi(e)|=|e|=r$. Indeed, consider $u,v \in e$ distinct. If $\pi_0(u) = \pi_0(v)$ then $\pi(u) \neq \pi(v)$ by distinctness of $u,v$; if instead $\pi_0(u) \neq \pi_0(v)$ then $\pi\big(F(\pi_0(u))\big) \cap \pi\big(F(\pi_0(v))\big) = \emptyset$, showing $\pi(u) \neq \pi(v)$. Using \ref{itm:augment-a}, we have $|\rd^{F'}(e)|\geq |\pi(\rd^{F'}(e))|=|\pi(e)|=r$, which  tells us that $|\rd^{F'}(e)|=r$, and so $\rd^{F'}(e)$ is an edge in $\G^s \otimes F'$. \qedhere
			\end{enumerate}
		\end{proof}
	  
	\subsection{Versatile sets} \label{Section_versatile}
		The goal in this section is to set up a framework under which we can concatenate tight paths/walks. This is based around the following concept. 
		\begin{definition}[Versatile sets] \label{def:versatile}
			For a balanced, height $h$ ordered tree $T$, a subset $e\subseteq L(T)$ is \emph{$t$-versatile} in $T$ if for every balanced, height $h$ ordered tree $T'$ with at most $t$ leaves, and monomorphism $\phi:A^T(e)\to T'$, there is a monomorphism $\psi:T'\to T$ with $\psi \circ\phi=\mathrm{id}$.
		\end{definition}

		We will prove that versatile sets exist inside all balanced $d$-ary trees with large enough $d$. This will involve an inductive argument where trees are built one vertex at a time via adding leaves. We first set up the theory of extending ordered trees like this. We remark that, unlike the rest of the paper, in this section we do not require trees to be balanced. 

		In a tree $S$, we call a path $P=v_1 \dots v_k$ \emph{extendible} if $v_1 < \dots < v_k$ (i.e.\ $v_{i-1}$ is the parent of $v_i$ for $i \in \{2,\dots,k\}$) and there is no vertex $x$ satisfying $x>v_k$ which is comparable with $v_1$ (see \Cref{fig:extendible}). 
		It is useful to note that isomorphisms map extendible paths to extendible paths.

		\begin{figure}[h]
			\centering
			\includegraphics[scale = 1]{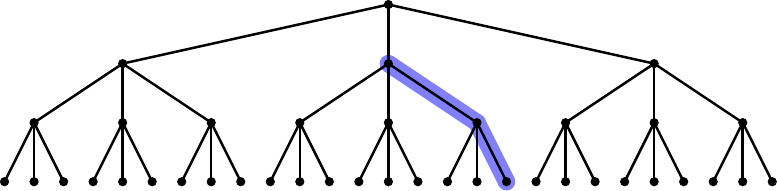}
			\caption{An extendible path in an ordered trees (with ordering of vertices in each level from left to right).}
			\label{fig:extendible}
		\end{figure}

		\begin{observation} \label{obs:extendible}
			A path $P = v_1 \dots v_k$ in an ordered tree $S$ is extendible if and only if $v_i$ is the latest child of $v_{i-1}$ in the ordering of $V(S)$, for $i \in \{2,\dots,k\}$, and $v_k$ is a leaf.
		\end{observation}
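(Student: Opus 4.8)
The plan is to prove the two directions of the equivalence separately, unpacking the definition of extendibility, which says that $P = v_1 \dots v_k$ is a path with $v_1 < \dots < v_k$ and there is no vertex $x > v_k$ comparable with $v_1$. First I would record the elementary fact that in an ordered tree, the children of a fixed vertex $u$ are ordered among themselves, and by property \ref{itm:ordered-a} together with \ref{itm:ordered-b}, the descendants of an earlier child all precede the descendants of a later child; in particular, for a child $c$ of $u$, the vertex $c$ is the latest child of $u$ if and only if there is no descendant of $u$ (other than $u$ itself, which precedes $c$ by \ref{itm:ordered-a}) that comes after the entire subtree rooted at $c$ — equivalently, if and only if every vertex comparable with $c$ and larger than all vertices of the subtree at $c$ fails to be a descendant of $u$. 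This observation will be the workhorse for both directions.

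For the ``only if'' direction, suppose $P$ is extendible. I would argue by contradiction: if some $v_i$ (for $i \in \{2,\dots,k\}$) is not the latest child of $v_{i-1}$, let $w$ be a later child of $v_{i-1}$, so $w > v_i$ and hence $w$ is comparable with $v_{i-1}$, which is an ancestor of $v_1$ — but then any leaf descendant $x$ of $w$ satisfies $x \geq w > v_i \geq \dots$; more carefully, since $w$ is incomparable with $v_i, \dots, v_k$ (they lie in a different subtree of $v_{i-1}$) and $w > v_i$, property \ref{itm:ordered-b} gives that every descendant of $w$ exceeds every descendant of $v_i$, in particular exceeds $v_k$; and such a descendant is comparable with $v_{i-1}$, hence with $v_1$ (as $v_1$ is a descendant of $v_{i-1}$... actually $v_1$ is the top, so $v_1$ is an ancestor of $v_{i-1}$) — this contradicts extendibility. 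Similarly, if $v_k$ is not a leaf, any child $y$ of $v_k$ satisfies $y > v_k$ and is comparable with $v_1$, again a contradiction. For the ``if'' direction, assume each $v_i$ is the latest child of $v_{i-1}$ and $v_k$ is a leaf; I would take any $x > v_k$ and show $x$ is not comparable with $v_1$. Since $x$ is comparable with $v_1$ would mean $x$ is a descendant of $v_1$ (as $v_1 < \dots < v_k < x$ rules out $x$ being an ancestor), I would locate the largest index $i$ such that $x$ is a descendant of $v_i$; then $i < k$ since $v_k$ is a leaf and $x \neq v_k$, so $x$ is a descendant of $v_i$ but not of $v_{i+1}$, hence $x$ lies in the subtree of some child $c \neq v_{i+1}$ of $v_i$, and since $v_{i+1}$ is the latest child of $v_i$ we get $c < v_{i+1}$, so by \ref{itm:ordered-b} every descendant of $c$ — in particular $x$ — precedes $v_{i+1} \leq v_k$, contradicting $x > v_k$.

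I do not expect any serious obstacle here; the only care needed is bookkeeping with the ordering axioms \ref{itm:ordered-a}–\ref{itm:ordered-b} and the directions of comparability (ancestor versus descendant), and making sure the ``largest index $i$ with $x$ a descendant of $v_i$'' argument is set up correctly when $x$ is comparable with $v_1$. The statement that isomorphisms preserve extendibility, mentioned just before the observation, is immediate once the observation is proved, since being the latest child and being a leaf are both order-theoretic properties preserved by isomorphisms — but it is not strictly needed for the observation itself.
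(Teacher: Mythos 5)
Your proof is correct and follows essentially the same approach as the paper: both directions reduce to tracking the ordering of descendants via axioms \ref{itm:ordered-a} and \ref{itm:ordered-b}, with your ``largest index $i$'' argument making explicit what the paper compresses into the observation that $v_k$ is the last descendant of $v_1$ (and your per-property contradiction in the other direction matching the paper's identification of that last descendant as a leaf).
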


		\begin{proof}
			For the ``if'' part, by definition we have $v_1 < \dots < v_k$, showing that $v_i$ is a child of $v_{i-1}$, for $i \in \{2,\dots,k\}$. Let $u$ be the last descendent of $v_1$ according to the ordering of $V(S)$. Then $u$ is a leaf, and $v_k = u$, implying that $v_i$ is the latest child of $v_{i-1}$ for every $i$, by \ref{itm:ordered-b}.

			For the ``only if'' part, suppose that $P$ has the property given in the observation, and suppose that $P$ is not extendible. This means that there is a vertex $x$ which satisfies $x > v_k$ and $x$ is comparable with $v_1$. If $x$ is an ancestor of $v_1$ then $x < v_1 < v_k$, a contradiction. If $x$ is a descendant of $v_1$ then $x \le v_k$, because $v_k$ is the last descendant of $v_1$, again a contradiction.
		\end{proof}

		Let $S\le T$ be ordered trees and $P$ an extendible path in $S$. We call $T$ a $P$-extension of $S$ if $T$ has one extra vertex $y$, one extra edge $v_1y$, and $y$ is the successor of $v_k$ in $T$ (i.e.\ $y$ is immediately after $v_k$ in the ordering on $V(T)$).
		\begin{lemma}\label{Lemma_tree_add_leaf}
			Let $S\leq T$ be ordered trees sharing a root so that $T$ has one extra vertex. Then $T$ is a $P$-extension of $S$ for some extendible path $P$ in $S$.
		\end{lemma}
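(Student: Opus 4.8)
The plan is to take the unique extra vertex $y \in V(T) \setminus V(S)$, identify its parent and the path from the root down to that parent, and show this is (essentially) the path $P$ we want — then argue that $y$ must be the latest child of its parent in $T$, so that the relevant root-to-leaf path of $S$ (obtained by following latest children, stopping just before where $y$ sits) is extendible and has $T$ as its $P$-extension. Concretely, first I would let $v_1$ be the parent of $y$ in $T$; since $S$ and $T$ share a root and $T$ has just one more vertex, $v_1 \in V(S)$, and the edge $v_1 y$ is the unique extra edge. Let $v_1 \dots v_k$ be the path in $S$ obtained by starting at $v_1$ and repeatedly passing to the latest child (in the ordering of $V(S)$), stopping at the leaf $v_k$ of $S$; by \Cref{obs:extendible} applied within $S$ this path is extendible in $S$ provided we also check $v_1$ is reached correctly from the root — but actually we want $v_1$ to be the \emph{top} of $P$, so the path $P = v_1 \dots v_k$ as just described is extendible in $S$ by Observation~\ref{obs:extendible} (each $v_i$ is the latest child of $v_{i-1}$ and $v_k$ is a leaf of $S$); note this uses only the definition, not that $v_1$ is near the root.

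The main thing to verify is that $y$ is the successor of $v_k$ in the ordering on $V(T)$, i.e.\ that $y$ comes immediately after $v_k$. Since $y$ is a child of $v_1$ in $T$ and $y \notin V(S)$, and since all other children of $v_1$ lie in $S$, I would use the ordering axioms \ref{itm:ordered-a} and \ref{itm:ordered-b} for $T$ to locate $y$: every vertex $<y$ that is comparable with $v_1$ is an ancestor of $v_1$ or equals $v_1$ (because any descendant $x$ of $v_1$ with $x < y$ would, by \ref{itm:ordered-b} applied in $T$ — or directly, being in the subtree — have to precede $y$; the subtree of $v_1$ in $S$ consists exactly of the vertices $v_1$ together with the descendants of its children, and $v_k$ is the last of these in $S$). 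So within the subtree of $T$ rooted at $v_1$, the vertices in order are: the $S$-part of that subtree (ending at $v_k$, its last descendant in $S$), then $y$. Hence $v_k < y$ and there is nothing strictly between them in $V(T)$: anything between $v_k$ and $y$ would be a descendant of $v_1$ in $T$ not in $S$, but $y$ is the only such vertex. Therefore $y$ is the successor of $v_k$ in $T$, which is exactly the condition that $T$ is a $P$-extension of $S$.

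The step I expect to be the main obstacle is the careful bookkeeping that, in the ordering of $V(T)$, the vertex $y$ lands immediately after $v_k$ and not somewhere else in the subtree of $v_1$ — this requires correctly using \ref{itm:ordered-b} to see that $y$, being incomparable with every vertex of $S$ outside $v_1$'s subtree, is ordered within that subtree, and then that among descendants of $v_1$ it is placed last because $v_k$ is the last descendant of $v_1$ lying in $S$ (Observation~\ref{obs:extendible} characterises $v_k$ as exactly this last descendant). Once that placement is pinned down, everything else — that $P$ is extendible, that $v_1$ is the parent of $y$, that exactly one vertex and one edge were added — is immediate from the hypotheses.
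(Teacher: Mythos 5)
The crux of your argument is the unproved claim that $y$ is the successor of $v_k$, where $v_k$ is the last descendant of $v_1$ in $S$ (obtained by following latest children from $v_1$). You flag this as ``the main thing to verify'', but what you write in that direction --- ``within the subtree of $T$ rooted at $v_1$, the vertices in order are: the $S$-part of that subtree (ending at $v_k$), then $y$'' --- is not an argument, it is a restatement of the claim. Nothing in \ref{itm:ordered-a}, \ref{itm:ordered-b}, or in the definition of $S\le T$ forces a newly inserted leaf $y$ to be placed \emph{after} all $S$-descendants of its parent. For instance, take $T$ to be the single tree with root $r$ and children $a<y<b$ (so $V(T)$ is ordered $r,a,y,b$) and set $S=T-y$: both conditions \ref{itm:ordered-a}, \ref{itm:ordered-b} hold for $T$, $S\le T$ shares the root and has the same height, yet $y$ lies strictly between $a$ and $b$, so $y$ is \emph{not} the successor of the last descendant of $r$ in $S$. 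Your $P=r\,b$ is extendible, but $y$ is the successor of $a$, not of $b$, so $T$ is not a $P$-extension of $S$ for your $P$. This is exactly the case your bookkeeping glosses over.

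It is worth noting that the paper's own proof runs into the same difficulty, just with the roles reversed. The paper defines $v_k$ to be the predecessor $v_0$ of $y$ in $V(T)$ (so ``$y$ is the successor of $v_k$'' comes for free), and then must show that the path $v_1\dots v_k$ is extendible. Its key step invokes \ref{itm:ordered-b} with $u=v_1,\,v=v'=y$ to conclude $x<y$ --- but $v_1$ and $y$ are comparable (parent and child), so \ref{itm:ordered-b} does not apply as stated, and in the example above (with $x=b$) the intended conclusion $x<y$ is simply false. Both your approach and the paper's boil down to the same unproven assertion: that $y$ is the last $T$-descendant of $v_1$. That assertion is genuinely needed and genuinely nontrivial --- indeed it does not follow from the stated hypotheses, so the lemma appears to require an additional condition (that $y$ is the last child of its parent in $T$, which is how extensions are actually produced in Lemma~\ref{Lemma_find_tree_with_property_ast}) or a reordering of how the missing vertices are added in Corollary~\ref{Corollary_trees_constructed_by_extensions}. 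So: your route is essentially the same as the paper's, and it has a gap at the same place; but the gap is real and your sketch does not close it.
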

		\begin{proof}
			Let $y$ be the vertex of $V(T)\setminus V(S)$, noting that it must be a leaf. Let $v_1y$ be the edge of $E(T)\setminus E(S)$, and let $v_0$ be the predecessor of $y$ in the ordering of $V(T)$. We claim that $v_0, v_1$ are comparable. Indeed, if not, then by part \ref{itm:ordered-a} of the definition of ordered trees, we have $y > v_1$, and by choice of $v_0$ we have $v_1 < v_0 < y$. Thus, by part \ref{itm:ordered-b} of the same definition, with $u = v_1, u' = y, v = v' = v_0$, we get $y < v_0$, a contradiction. So $v_0, v_1$ are comparable, which in fact implies that $v_0$ is a descendant of $v_1$ (because otherwise $v_0 < v_1 < y$, contrary to the choice of $v_0$ as $y$'s predecessor). Thus, there is a path $P = v_1 \dots v_k = v_0$ in $S$, with $v_1 < \dots < v_k$.

			We claim that $P$ is extendible. To see this, consider a vertex $x$ satisfying $x>v_k$, and suppose for contradiction that $x, v_1$ are comparable. If $x$ is an ancestor of $v_1$, then $x < v_1 < v_k$, a contradiction. If $x$ is a descendant of $v_1$, then $v_1, y$ are incomparable (since $y$ is a leaf, it is only comparable with $v_1$ and its ancestors). 
			Then \ref{itm:ordered-b} with $u=v_1, u'=x, v=v'=y$ gives $x<y$, a contradiction to $v_k$ being the predecessor of $y$. 
		\end{proof}
		Repeatedly applying the above lemma gives the following.
		\begin{corollary}\label{Corollary_trees_constructed_by_extensions}
			Let $S\leq T$ be ordered trees sharing a root. Then $T$ can be constructed from $S$ via a sequence of $P$-extensions.
		\end{corollary}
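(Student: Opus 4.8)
The plan is to induct on $N := |V(T)| - |V(S)|$, reducing at each step to the single-vertex case handled by Lemma~\ref{Lemma_tree_add_leaf}. If $N = 0$ then $V(S) = V(T)$; since $S$ is an ordered subtree of $T$ with the same root and $E(S) \subseteq E(T)$ with $|E(S)| = |V(S)| - 1 = |V(T)| - 1 = |E(T)|$, we get $E(S) = E(T)$, and as $S$ inherits its order from $T$ we conclude $S = T$, so $T$ arises from $S$ by the empty sequence of $P$-extensions.

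Now suppose $N \ge 1$, so $V(S) \subsetneq V(T)$. The first thing I would do is isolate one leaf of $T$ to reinsert into $S$. Since $S$ and $T$ share a root, the root of $T$ lies in $V(S)$; choosing any $z \in V(T) \setminus V(S)$ and following the unique root-to-$z$ path in $T$, let $y$ be the first vertex on it not in $V(S)$, and let $v$ be the parent of $y$ in $T$, so that $v \in V(S)$. Let $S'$ be the subgraph of $T$ on vertex set $V(S) \cup \{y\}$ with edge set $E(S) \cup \{vy\}$, carrying the order inherited from $T$.

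Next I would verify that $S \le S' \le T$. The graph $S'$ is a subtree of $T$ because $v \in V(S)$ makes it connected, it shares the root of $T$, and since $S \subseteq S' \subseteq T$ it has the common height of $S$ and $T$. Moreover, because $S'$ is a subtree of $T$ through the shared root, the root-to-$b$ path of $T$ lies in $S'$ for every $b \in V(S')$, so the ancestor/descendant/comparability relations of $S'$ are precisely the restrictions of those of $T$; hence the order axioms \ref{itm:ordered-a}--\ref{itm:ordered-b} for $S'$ follow from those for $T$, and $S'$ is a genuine ordered subtree of $T$ with $S \le S'$. Since $S'$ has exactly one more vertex than $S$, Lemma~\ref{Lemma_tree_add_leaf} provides an extendible path $P$ in $S$ such that $S'$ is a $P$-extension of $S$. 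Applying the induction hypothesis to $S' \le T$ (noting $|V(T)| - |V(S')| = N - 1$) expresses $T$ as a sequence of $P$-extensions starting from $S'$, and prepending the extension $S \to S'$ yields such a sequence starting from $S$, completing the induction.

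The only place demanding any attention is checking that the intermediate tree $S'$ is legitimate --- that adjoining the single leaf $y$ preserves the height and respects the ordering axioms --- but this is routine precisely because $S'$ sits inside $T$ as a subtree through the shared root, so I do not foresee a genuine obstacle.
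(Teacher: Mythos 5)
Your proof is correct and is essentially the paper's argument: the paper's entire proof is the one-liner ``Repeatedly applying the above lemma gives the following,'' and you simply spell out the induction on $|V(T)|-|V(S)|$ that this remark leaves implicit, including the routine verification that the intermediate tree $S'$ obtained by adjoining one vertex satisfies $S \le S' \le T$.
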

		 
		Extensions can be used to build monomorphisms as follows.
		\begin{observation}\label{Observation_extend_isomorphism}
			Let $S,T,S',T'$ be ordered trees, and $P_S,P_T$ paths in $S, T$, such that $S'$ is a $P_S$-extension of $S$, and $T'$ is a $P_T$-extension of $T$.
			If  $f: S \to T$  an isomorphism with $f(P_S)=P_T$, then $f$ can be extended to an isomorphism $f: S'\to T'$.
		\end{observation}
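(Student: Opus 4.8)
The plan is to extend $f$ in the only way possible --- send the new leaf of $S'$ to the new leaf of $T'$ --- and then check that the resulting map is still an order- and edge-preserving bijection, hence an isomorphism. Write $P_S = v_1 \dots v_k$ and $P_T = w_1 \dots w_{k'}$, so that $v_1 < \dots < v_k$ and $w_1 < \dots < w_{k'}$ by the definition of an extendible path. Since $f$ is an order-preserving bijection with $f(P_S) = P_T$, injectivity forces $k = k'$ and order-preservation forces $f(v_i) = w_i$ for every $i$; in particular $f(v_1) = w_1$ and $f(v_k) = w_k$. Let $y$ be the unique vertex of $V(S') \setminus V(S)$ and $z$ the unique vertex of $V(T') \setminus V(T)$, and define $\hat f \colon V(S') \to V(T')$ by $\hat f|_{V(S)} = f$ and $\hat f(y) = z$. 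This is visibly a bijection.

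It then remains to verify that $\hat f$ preserves edges and order (in both directions), so that $\hat f$ and its inverse are homomorphisms and $\hat f$ is an isomorphism $S' \to T'$ extending $f$. For edges: $f$ already carries $E(S)$ onto $E(T)$, and the single remaining edge $v_1 y$ of $S'$ is sent to $f(v_1)z = w_1 z$, which is precisely the single extra edge of $T'$; the inverse is symmetric. For order: two vertices of $S$ present no new case, since $f$ is order-preserving. The only new case pits a vertex $u \in V(S)$ against $y$. Here I use that, by definition of a $P_S$-extension, $y$ is the immediate successor of $v_k$ in the ordering of $V(S')$, so for $u \in V(S)$ one has $u < y$ if and only if $u \le v_k$; likewise $z$ is the immediate successor of $w_k$ in $V(T')$, so $f(u) < z$ if and only if $f(u) \le w_k$. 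Since $f$ is order-preserving and $f(v_k) = w_k$, the condition $f(u) \le w_k$ is equivalent to $u \le v_k$, and therefore $u < y \Leftrightarrow \hat f(u) < \hat f(y)$; the same reasoning applied to $f^{-1}$ handles $\hat f^{-1}$.

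I do not expect a genuine obstacle here: this is a short bookkeeping argument. The only step that needs any care is transporting the ``immediate successor'' relation through $f$ --- concluding that $z$ comes right after $f(v_k)$ in $V(T')$ --- and this is exactly where the hypothesis $f(P_S) = P_T$ (which yields $f(v_k) = w_k$) is used, rather than merely knowing that $f$ is an isomorphism from $S$ onto $T$.
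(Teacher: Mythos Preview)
Your argument is correct and follows essentially the same route as the paper: extend $f$ by sending the new leaf to the new leaf, then verify bijectivity, edge-preservation (the unique new edge $v_1y$ goes to the unique new edge $w_1z$), and order-preservation via the ``immediate successor'' characterisation of the new vertex. The paper compresses the order check into the single remark that $f$ sends the predecessor of $s''$ to the predecessor of $t''$; your version spells this out more carefully, but the idea is the same.
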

		\begin{proof}
			Let $s''$ be the vertex of $V(S')\setminus V(S)$, and $t''$ the vertex of $V(T')\setminus V(T)$. Extend $f$ by defining $f(s'')=t''$. This is clearly a bijection, maps edges to edges (for this notice that all vertices in $P_S$ precede $s''$ in $S'$, and so if $P_S$ has ends $a,b$ with $a < b$, then $b$ is the predecessor of $s''$, showing that $s''$ is adjacent to $a$. A similar argument holds for $P_T$, and then the claim follows by the fact that $f$ is an isomorphism which maps $P_S$ to $P_T$), and maps the predecessor of $s''$ to the predecessor of $t''$, which implies that $f$ is order preserving, i.e.\ it is an isomorphism.
		\end{proof}

		Our main building block for versatile sets is the following lemma.
		\begin{lemma}\label{Lemma_find_tree_with_property_ast}
			Let $d\gg h,k, s$.
			Let $T_0$ be a balanced $d$-ary tree of height $h$. There is a sequence of trees $T_0\ge \dots \ge T_k$ such that $T_k$ is a balanced $s$-ary tree of height $h$ and we have the following property:
			\begin{enumerate}[label = \rm($\ast$)]
				\item \label{itm:star}
					For any $S \le T_i$ and extendible path $P$ in $S$, there is a $P$-extension of $S$ contained in $T_{i-1}$.
			\end{enumerate}
		\end{lemma}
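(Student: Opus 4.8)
The plan is to obtain every $T_i$ from $T_0$ by one uniform pruning that always leaves some ``tail room'' at each vertex. First I would fix integers $d = D_0 > D_1 > \dots > D_k = s$ (possible since $d \gg k,s$ forces $d - s \ge k$), and let $T_i$ be the ordered subtree of $T_0$ obtained by keeping, at each vertex of level less than $h$, only its first $D_i$ children in the ordering of $T_0$ and discarding everything hanging below the other children. Since the first $D_i$ children are among the first $D_{i-1}$, this yields $T_0 \ge T_1 \ge \dots \ge T_k$; each $T_i$ is a balanced $D_i$-ary tree of height $h$ (every vertex of level $< h$ keeps $D_i \ge s \ge 1$ children, so no branch is cut off early); and $T_k$ is balanced $s$-ary of height $h$. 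It then remains only to verify property \ref{itm:star}.

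To check \ref{itm:star}, I would fix $i$, a subtree $S \le T_i$, and an extendible path $P = v_1 \dots v_k$ in $S$, and restrict to the case $\level(v_1) < h$: if $v_1$ is at level $h$ then it is a leaf of $T_i$, every $P$-extension of $S$ has height $h+1$, and so cannot embed in $T_{i-1}$ --- so this is the only case in which \ref{itm:star} is ever used. By Observation~\ref{obs:extendible}, $P$ follows latest children down to a leaf, so $v_k$ is the \emph{last} descendant of $v_1$ in $S$. As $\level(v_1) < h$, the vertex $v_1$ is a non-leaf of $T_{i-1}$; I would take $y$ to be its last child in $T_{i-1}$. Since $y$ is the $D_{i-1}$-th child of $v_1$ and $D_{i-1} > D_i$, it is not among $v_1$'s first $D_i$ children, so $y \notin V(T_i) \supseteq V(S)$. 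The candidate extension is $T^+ := S \cup \{y\}$ with the extra edge $v_1 y$, ordered as a subtree of $T_{i-1}$; it lies in $T_{i-1}$, has one extra vertex and one extra edge, and (as $\level(v_1) < h$) still has height $h$, so $S \le T^+$.

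The main obstacle, and essentially the only real work, is to show $y$ is the \emph{successor} of $v_k$ in $T^+$, so that $T^+$ is genuinely a $P$-extension of $S$. First, since the first step of $P$ goes from $v_1$ to its latest $S$-child $v_2$, and $v_2 \in V(T_i)$ is among $v_1$'s first $D_i$ children in $T_{i-1}$, the vertices $v_2$ and $y$ are incomparable siblings with $v_2 < y$, so \ref{itm:ordered-b} gives $v_k < y$ (the descendant $v_k$ of $v_2$ precedes $y$; when $k = 1$ we have $v_k = v_1 < y$ directly). Next, suppose some $w \in V(S)$ satisfies $v_k < w < y$. If $w$ is comparable with $v_1$ it is an ancestor of $v_1$, so $w \le v_1 \le v_k$ by \ref{itm:ordered-a}, or a descendant of $v_1$, so $w \le v_k$ because $v_k$ is $v_1$'s last $S$-descendant; if $w$ is incomparable with $v_1$ then, as $y$ descends from $v_1$, \ref{itm:ordered-b} rules out $v_1 < w$ (which would force $y < w$), so $w < v_1 \le v_k$. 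Every possibility contradicts $w > v_k$, so $y$ immediately follows $v_k$ and $T^+$ is a $P$-extension of $S$ contained in $T_{i-1}$, establishing \ref{itm:star}. This is precisely where we need the $D_i$ to decrease strictly (so a pruned child $y$ of $v_1$ is always available) and where the extendibility of $P$ --- equivalently, that $v_k$ is $v_1$'s last $S$-descendant --- is used essentially; the rest of the lemma is immediate from the construction.
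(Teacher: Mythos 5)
Your proof is correct, but the construction and the verification of property \ref{itm:star} differ genuinely from the paper's. The paper labels $V(T_0)$ by vectors in $([d]\cup\{0\})^h$, sets $d = 2^k(s+1)-1$, $d_i = 2^{k-i}(s+1)-1$, and keeps children whose positions lie in the dyadically nested sets $X_i = 2^i[d_i]$; the extension vertex $y$ is then chosen to be the immediate next sibling of $v_2$ (the second vertex of $P$) inside $T_{i-1}$, and the successor property is verified by a coordinate-by-coordinate lexicographic comparison, using that a non-trivial dyadic gap cannot contain a coarser multiple. You instead prune by keeping the first $D_i$ children with $d = D_0 > D_1 > \dots > D_k = s$, and put $y$ at the opposite extreme --- the last child of $v_1$ in $T_{i-1}$ --- so that the successor property follows from a purely order-theoretic trichotomy on $w$ (ancestor of $v_1$, $S$-descendant of $v_1$, or incomparable with $v_1$) using only \ref{itm:ordered-a}, \ref{itm:ordered-b}, and the extendibility of $P$. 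Your route is more elementary: it avoids the coordinate bookkeeping entirely, and it handles uniformly the case where a candidate $x \in V(S)$ agrees with $v_k$ on more coordinates than $v_1$ does, which the paper's case split glosses over and which in fact requires the same appeal to extendibility that you make explicit. The trade-off is negligible --- the paper's nesting is tailored to the labelling of Notation~\ref{not:ordered-tree}, while yours works directly with the ordered-tree axioms --- but the conclusion and the overall parameter regime ($d \gg h,k,s$, output $s$-ary) are the same.
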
 
		\begin{proof}
			Without loss of generality, we can assume that $d=2^k(s+1)-1$ (otherwise pass to a subtree which is $(2^k(s+1)-1)$-ary and continue the proof there).
			For each $i \in [k]$, set $d_i=2^{k-i}(s+1)-1$, noting that $d_k=s$ and $d_0=d$.
			Label $V(T)$ by $([d] \cup \{0\})^h$ as explained in Notation~\ref{not:ordered-tree}. 
			Define 
			\begin{equation*}
				X_i=2^i \cdot [d_i] = \{2^ij : j \in [d_i]\},
			\end{equation*}
			noting that $[d]=X_0\supseteq \dots \supseteq X_k$. 
			Let $T_i$ be the subtree of $T$ consisting of vertices in $(X_i \cup \{0\})^h$, noting that $T_i$ is a $d_i$-ary tree, and we have $T=T_0\ge \dots \ge T_k$.  
			For property \ref{itm:star}, consider some $S\le T_{i}$ and extendible path $P$ in $S$. Then $P$ can be written as $v_0 \dots v_{\ell}$, with $v_0 < \dots < v_{\ell}$, where  
			\begin{align*}
				v_0& \, =(2^i a_1, \dots, 2^i a_t, 0, \dots, 0) \\
				v_\ell& \, =(2^i a_1, \dots, 2^i a_t, 2^i a_{t+1}, \dots, 2^i a_{t+\ell},0, \dots, 0),
			\end{align*}
			for some $t,\ell \in [h]$ and $a_1, \dots, a_{t+\ell} \in [d_i]$.
			Define 
			\begin{equation*}
				y=(2^ia_1, \dots, 2^ia_t,2^ia_{t+1}+2^{i-1},0,\dots,0).
			\end{equation*}
			Notice that $y \in V(T_{i-1}) \setminus V(T_i)$ (as $2^i[d_i]+2^{i-1} \subseteq 2^{i-1}[d_{i-1}]$, using $d_{i-1} \ge 2d_i + 1$) and that $y$ is a child of $v_0$ in $T_{i-1}$, and add $y$ to $S$ to get a $P$-extension. To see that this is indeed a $P$-extension, we need to show that $y$ is the successor of $v_\ell$ in $S$. 
			First, observe that $v_\ell < y$ (as the first $t$ coordinates of $v_\ell$ and $y$ are the same, and the $(t+1)$-st coordinate of $v_\ell$ is smaller than that of $y$). It thus suffices to show that there is no vertex $x \in V(S)$ satisfying $v_\ell < x < y$.
			Suppose there is such $x=(x_1, \dots, x_h)$. The first coordinate where $x$ disagrees with $v_\ell$ and $y$ must be $t+1$ --- otherwise we would have $x<v_\ell$ or $x>y$. On the $(t+1)$-st coordinate we would need $2^i a_t < x_{t+1} < 2^i a_t + 2^{i-1}$ --- but then $2^{-(i-1)}x_{t+1}$, which is an integer by definition of $X_{i-1}$, satisfies $2a_t < 2^{-(i-1)}x_{t+1} < 2a_t + 1$, a contradiction.
		\end{proof}

		The following is one of the key results in this section.

		\begin{lemma}[Existence of versatile sets]\label{Lemma_versatile_set_existence}
			Let $d\gg h,s,t$.
			Let $T$ be a balanced $d$-ary tree of height $h$ and let $\tau\in \types(h,s)$. There is subset $e\subseteq L(T)$ of type $\tau$ that is $t$-versatile. 
		\end{lemma}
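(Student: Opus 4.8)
The plan is to derive the statement from Lemma~\ref{Lemma_find_tree_with_property_ast} by a vertex-by-vertex embedding argument. First I would fix $k=k(h,t)$ to be the maximum possible number of vertices in a balanced ordered tree of height $h$ with at most $t$ leaves (this is finite, and $d\gg h,s,t$ gives $d\gg h,k,s$), and apply Lemma~\ref{Lemma_find_tree_with_property_ast} to $T$ to obtain balanced height-$h$ trees $T=T_0\ge T_1\ge\dots\ge T_k$, where $T_k$ is $s$-ary and the sequence satisfies property~\ref{itm:star}. Since $T_k$ is a balanced $s$-ary tree of height $h$ and $\tau\in\types(h,s)$ — so every internal vertex of a type-$\tau$ tree has at most $s$ children — one embeds a type-$\tau$ tree into $T_k$ greedily from the root downwards, at each internal vertex taking the first few children of its image (which respects the order by~\ref{itm:ordered-b}); let $e\subseteq L(T_k)$ be the image of its leaves, so $\type(e)=\tau$. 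As $T_k\le T$, all ancestors in $T$ of vertices of $e$ lie in $T_k$, so $A^T(e)=A^{T_k}(e)\le T_k$ is balanced of height $h$. I would then claim that $e$ is $t$-versatile in $T$, which is exactly the content of the lemma.

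To prove the claim I would take a balanced height-$h$ tree $T'$ with at most $t$ leaves and a monomorphism $\phi:A^T(e)\to T'$, and must produce a monomorphism $\psi:T'\to T$ with $\psi\circ\phi=\mathrm{id}$. Since $\phi$ preserves levels (Lemma~\ref{Lemma_level_preserved_by_monomorphism}), its image $\phi(A^T(e))$ has the same root and the same height $h$ as $T'$, i.e.\ $\phi(A^T(e))\le T'$, so by Corollary~\ref{Corollary_trees_constructed_by_extensions} there is a chain $\phi(A^T(e))=S'_0\le S'_1\le\dots\le S'_m=T'$ in which each $S'_j$ is a $P'_j$-extension of $S'_{j-1}$ along some extendible path $P'_j$; since each step adds one vertex and $|V(T')|\le k$, one has $m\le k$.

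Next I would build, by induction on $j$, ordered subtrees $S_0\le S_1\le\dots\le S_m$ of $T$ with $S_j\le T_{k-j}$ together with isomorphisms $\psi_j:S'_j\to S_j$ such that $\psi_j$ extends $\psi_{j-1}$. The base case is $S_0=A^T(e)\le T_k$ with $\psi_0$ the inverse of $\phi$ corestricted to its image, so that $\psi_0\circ\phi=\mathrm{id}$. In the inductive step, given $\psi_{j-1}:S'_{j-1}\to S_{j-1}\le T_{k-j+1}$, I set $P_j=\psi_{j-1}(P'_j)$, which is extendible in $S_{j-1}$ because isomorphisms map extendible paths to extendible paths; I then apply property~\ref{itm:star} to $S_{j-1}\le T_{k-j+1}$ and $P_j$ to obtain a $P_j$-extension $S_j$ of $S_{j-1}$ contained in $T_{k-j}$, which satisfies $S_j\le T_{k-j}$ since a $P_j$-extension of $S_{j-1}$ has the same root and height ($=h$) as $S_{j-1}$; finally, as $S'_j$ is a $P'_j$-extension of $S'_{j-1}$, $S_j$ is a $P_j$-extension of $S_{j-1}$, and $\psi_{j-1}(P'_j)=P_j$, Observation~\ref{Observation_extend_isomorphism} extends $\psi_{j-1}$ to an isomorphism $\psi_j:S'_j\to S_j$, closing the induction. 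At $j=m$ this gives $S_m\le T_{k-m}\le T_0=T$, and composing $\psi_m:S'_m\to S_m$ with the inclusion $S_m\hookrightarrow T$ yields a monomorphism $\psi:T'\to T$; since $\psi_m$ extends $\psi_0$ and $\psi_0\circ\phi=\mathrm{id}$, also $\psi\circ\phi=\mathrm{id}$, so $e$ is $t$-versatile.

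All the conceptual content sits in Lemma~\ref{Lemma_find_tree_with_property_ast}; the step I expect to need the most care is the bookkeeping that every tree appearing in the argument (each $S'_j$ in the decomposition of $T'$ and each $S_j$ being built inside $T$) has height exactly $h$, so that property~\ref{itm:star} — which only speaks about subtrees $S\le T_i$, i.e.\ of the same height $h$ — genuinely applies at every stage, together with the choice of $k$ as an a priori bound on the number of single-vertex extensions needed to reach an arbitrary $T'$.
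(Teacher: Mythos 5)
Your proof is correct and takes essentially the same approach as the paper: apply Lemma~\ref{Lemma_find_tree_with_property_ast} to get a chain of subtrees, pick a type-$\tau$ subset of leaves of the last ($s$-ary) tree, decompose $T'$ via Corollary~\ref{Corollary_trees_constructed_by_extensions} into single-vertex extensions, and rebuild each extension step by step inside the chain using property~\ref{itm:star} together with Observation~\ref{Observation_extend_isomorphism}. The only cosmetic difference is your a priori bound on the number of extension steps (the maximum vertex count of a balanced height-$h$ tree with at most $t$ leaves, versus the paper's choice of $ht$, with $k := |T'|-|S| \le ht$).
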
 
		\begin{proof}
			Consider a sequence $T=T_0\ge \dots \ge T_{ht}$, as given by Lemma~\ref{Lemma_find_tree_with_property_ast}. 
			Let  $S$ be a type $\tau$ subtree of $T_{ht}$ (which exists because $T_{ht}$ is a balanced $s$-ary tree of height $h$), and set $e=L(S)$ (so $S=A^T(e)$ and $e$ is type $\tau$). Consider a height $h$ tree $T'$ with at most $t$ leaves and a monomorphism $\phi:S\to T'$ as in \Cref{def:versatile} of ``$t$-versatility''. 
			We need to find a monomorphism $\psi : T' \to T$ such that $\psi \circ \phi = \id_S$.

			Set $k:=|T'|-|S|$, noting $k\leq ht$.
			Use Lemma~\ref{Corollary_trees_constructed_by_extensions} to get a sequence of trees $\phi(S)=A_0'\le A_1'\le \dots \le A_{k}'=T'$ and paths $P_i'\subseteq A_i'$ such that each $A_{i+1}'$ is constructed from $A_i'$ via a  $P_i'$-extension. 

			We build trees $A_i\subseteq T_{ht-i}$ and isomorphisms $\psi_{i}:A_i'\to A_i$, for $i \in \{0, \dots, k\}$, as follows. Start with $A_0:=S$ and $\psi_0=(\phi|_{\phi(S)})^{-1}$.
			Now assume that we have defined $A_0, \dots, A_i$ and $\psi_0, \dots, \psi_i$ with the desired properties.
			Since $A'_{i+1}$ is built from $A'_{i}$ via a $P'_i$-extension, $\psi_i(P_i')$ is extendible in $A_i$. Thus, using \ref{itm:star} and $A_i \le T_{ht-i}$, there is a $\psi_i(P_i')$-extension $A_{i+1} \le T_{ht-i-1}$ of $A_i$.
			Use \Cref{Observation_extend_isomorphism} to extend $\psi_i$ to an isomorphism $\psi_{i+1}:A'_{i+1}\to A_{i+1}$.

			We claim that $\psi_k$ satisfies the requirements, namely it is a function from $T'$ to $T$ (this holds by construction) and $\psi \circ \phi = \id_S$. For the latter bit, by construction, we have $\psi_k|_{\phi(S)}=\psi_k|_{A_0'}=\psi_0=(\phi|_{\phi(S)})^{-1}$. Therefore $\psi_k(\phi(x))=\phi^{-1}(\phi(x))=x$ for all $x \in V(S)$, as required.
		\end{proof}

		The following lemma is how we use versatile sets. Given an arbitrary tight walk $\mathcal P$, we get another one that goes between two versatile sets --- without changing the types of edges in $\mathcal P$.
		\begin{lemma}\label{Lemma_path_between_versatile_sets}
			Let $\HH$ be an $r$-uniform hypergraph, let $F$ be a balanced $V(\HH)$-forest and $\mathcal P$ a tight walk in $\HH \otimes F$ with $V(\mathcal P)\subseteq L(F)$ and write $\P = v_1 \dots v_k$.
			Write $a = (v_1, \dots, v_{r-1})$ and $b = (v_{k-r+2}, \dots, v_k)$, and suppose that $u_a, u_b$ are distinct roots of $F$ such that $V(a) \subseteq L(F(u_a))$ and $V(b) \subseteq L(F(u_b))$.

			Suppose that $a'\subseteq L(F(u_a))$ and $b'\subseteq L(F(u_b))$ are $k$-versatile in $F(u_a)$ and $F(u_b)$, respectively, and satisfy $\type_F(a')=\type_F(a)$ and $\type_F(b')=\type_F(b)$. Then there is a tight walk $\mathcal P'$ in $\HH \otimes F$ from $a'$ to $b'$ of order $k$ with the property that the $i$-th edge of $\mathcal P'$ has the same root set and type as the $i$-th edge of $\mathcal P$.
		\end{lemma}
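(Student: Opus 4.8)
The plan is to replace, inside $\mathcal P$, every leaf lying in $F(u_a)$ or $F(u_b)$ by another leaf of the same tree, keeping all other vertices fixed, in such a way that the first $r-1$ vertices of the walk become $a'$ and the last $r-1$ become $b'$. The point is that whether an $r$-set $e\subseteq L(F)$ is an edge of $\HH\otimes F$ depends only on $\pi_0(e)$, so a substitution that preserves $\pi_0$ and stays injective on each $r$-window automatically sends tight walks to tight walks with the same underlying edges of $\HH$; and if it restricts to a monomorphism on each tree $F(v)$, then it preserves the type of every subset of $L(F(v))$ by \Cref{Lemma_A_preserved_by_monomorphisms}. To set this up, let $h$ be the common height of the trees of $F$, let $W_a$ be the set of leaves of $F(u_a)$ occurring among $v_1,\dots,v_k$, and define $W_b$ analogously. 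Then $|W_a|,|W_b|\le k$, the sets $W_a,W_b$ are disjoint (as $u_a\ne u_b$ gives $V(F(u_a))\cap V(F(u_b))=\emptyset$), $V(a)\subseteq W_a$, and $V(b)\subseteq W_b$. The trees $S_a:=A^{F(u_a)}(W_a)$ and $S_b:=A^{F(u_b)}(W_b)$ are balanced of height $h$ with at most $k$ leaves and contain $A^{F(u_a)}(a)$ and $A^{F(u_b)}(b)$ respectively. Since $\type_F(a)=\type_F(a')$ and these ancestor sets are computed inside the single tree $F(u_a)$, there is an isomorphism $\iota_a\colon A^{F(u_a)}(a)\to A^{F(u_a)}(a')$, which carries $a$ (enumerated increasingly) onto $a'$; define $\iota_b$ the same way.

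Next I would invoke $k$-versatility of $a'$ in $F(u_a)$ (\Cref{def:versatile}) with $T'=S_a$ and the monomorphism $A^{F(u_a)}(a')\to S_a$ obtained by composing $\iota_a^{-1}$ with the inclusion $A^{F(u_a)}(a)\hookrightarrow S_a$; this yields a monomorphism $\psi_a\colon S_a\to F(u_a)$ with $\psi_a\circ\iota_a^{-1}=\mathrm{id}$ on $A^{F(u_a)}(a')$, equivalently $\psi_a$ restricts to $\iota_a$ on $A^{F(u_a)}(a)$. Symmetrically one gets a monomorphism $\psi_b\colon S_b\to F(u_b)$ restricting to $\iota_b$ on $A^{F(u_b)}(b)$. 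Setting $W=\{v_1,\dots,v_k\}$, define $\psi\colon W\to L(F)$ by $\psi|_{W_a}=\psi_a$, $\psi|_{W_b}=\psi_b$, and $\psi=\mathrm{id}$ on $W\setminus(W_a\cup W_b)$, and put $v_i':=\psi(v_i)$, $\mathcal P':=v_1'\cdots v_k'$.

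It then remains to check that $\mathcal P'$ works. Since $\psi_a$ maps into $F(u_a)$ and $\psi_b$ into $F(u_b)$, we have $\pi_0\circ\psi=\pi_0$ on $W$, whence $\psi$ is injective on $W$; so each $r$-window of $\mathcal P'$ is the $\psi$-image of the corresponding $r$-window of $\mathcal P$, has the same $\pi_0$-image of size $r$, and hence is an edge of $\HH\otimes F$. Using that $\psi_a,\psi_b$ are order-preserving and that in the order on $L(F)$ each block $L(F(v))$ is an interval (immediate from \ref{itm:ordered-a}--\ref{itm:ordered-b}, since distinct roots of $F$ are incomparable), one verifies that $\psi$ is order-preserving on $W$; this gives $v_1'<\dots<v_{r-1}'$ and $v_{k-r+1}'<\dots<v_k'$, so $\mathcal P'$ is a tight walk of order $k$. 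Its start is $(\psi_a(v_1),\dots,\psi_a(v_{r-1}))=\iota_a(a)=a'$ and, likewise, its end is $b'$. Finally, the $i$-th edge $e_i':=\psi(e_i)$ of $\mathcal P'$ satisfies $\pi_0(e_i')=\pi_0(e_i)$, and for each $v\in\pi_0(e_i)$ the set $e_i'\cap V(F(v))$ is the image of $e_i\cap V(F(v))$ under one of $\psi_a,\psi_b,\mathrm{id}$, so \Cref{Lemma_A_preserved_by_monomorphisms} gives $\type_{F(v)}(e_i'\cap V(F(v)))=\type_{F(v)}(e_i\cap V(F(v)))$; thus the $i$-th edge of $\mathcal P'$ has the same root set and type as the $i$-th edge of $\mathcal P$.

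I expect the main obstacle to be choosing the correct input for the versatility definition --- namely the tree $S_a=A^{F(u_a)}(W_a)$, which encodes exactly the part of $\mathcal P$ inside $F(u_a)$, together with the monomorphism $\iota_a^{-1}$ embedding $A^{F(u_a)}(a')$ into it --- after which the argument is routine ordered-forest bookkeeping: preservation of $\pi_0$, preservation of types via \Cref{Lemma_A_preserved_by_monomorphisms}, and preservation of the order on $L(F)$. A secondary point that needs a little care, since $\mathcal P$ is a walk rather than a path, is that the substitution must be defined on the \emph{set} of vertices occurring in $\mathcal P$ rather than index-by-index, so that any repeated vertex is replaced consistently.
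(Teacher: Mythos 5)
Your proposal is correct and follows essentially the same approach as the paper: you take the ancestor tree of the leaves of $\P$ inside each of $F(u_a)$, $F(u_b)$ (your $S_a,S_b$ match the paper's $A,B$), feed it together with the type-isomorphism $\iota_a^{-1}$ (the paper's $\phi_a$) into the versatility definition to get monomorphisms $\psi_a,\psi_b$ fixing $a',b'$, glue them with the identity into a single $\pi_0$-preserving monomorphism $\psi$, and then push $\P$ forward. The only cosmetic blemish is the aside that the $\pi_0$-image of each $r$-window ``has size $r$'' --- that need not hold and is not needed, since edge membership in $\HH\otimes F$ only requires $\pi_0(e)$ to be contained in an edge of $\HH$, which $\pi_0$-preservation gives immediately.
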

		
		\begin{proof}
			Let $A=A^{F}\big(V(\mathcal P)\cap V(F(u_a))\big)$ and $B=A^F\big(V(\mathcal P)\cap V(F(u_b))\big)$, and note that $A,B$ are trees with at most $k$ leaves. 
			Let $\phi_a:A^F(a') \to  A^F(a)$ and $\phi_b:A^F(b') \to  A^F(b)$ be isomorphisms (which exist because $a,a'$ and $b,b'$ have the same types). Noting that $A^F(a)\subseteq A$ and $A^F(b)\subseteq B$, we can use \Cref{def:versatile} of ``versatility'' to get monomorphisms $\psi_a: A\to F(u_a)$ and $\psi_b:B\to F(u_b)$ with $\psi_a\circ \phi_{a} = \id_{A^F(a')}$ and $\psi_b\circ \phi_b=\id_{A^F(b')}$. 

			Let $\psi:A^F(V(\mathcal P))\to V(F)$ be the function which is $\psi_a$ on $A$, $\psi_b$ on $B$, and the identity everywhere else (this is well defined as $A$ and $B$ are vertex-disjoint by assumption on $u_a,u_b$).  Note that $\psi$ is the identity on roots and a monomorphism (if $\psi$ fixes roots, and $\psi|_{F(r)}$ is a monomorphism for every $r$ then it is a monomorphism). Denote by $e_i$ the $i$-th edge of $\P$, and write $e_i' = \psi(e_i)$. 
			We claim that $e_i'$ is an edge in $\HH \otimes F$. Indeed, because $\psi$ is injective we have $|e_i'| = |\psi(e_i)| = |e_i| = r$. Moreover, $\pi_0(e_i') = \pi_0(\psi(e_i)) = \pi_0(e_i)$, because $\pi_0(\psi(x)) = \pi_0(x)$ for every vertex $x$ in the range of $\psi$ (as this holds for $\psi_a$, $\psi_b$, and the identity). By definition of $\HH \otimes F$, since $e_i$ is an edge we have $\pi_0(e_i)$ is a subset of an edge in $\HH$, and thus the same holds for $e_i'$, implying that $e_i'$ is an edge in $\HH \otimes F$. Define $\P' = \psi(\P)$. Then, by $e_i'$ being an edge in $\HH \otimes F$ for every $i$, we have that $\P'$ is a tight path in $\HH \otimes F$, whose $i$-th edge is $e_i'$.

			Lemma~\ref{Lemma_A_preserved_by_monomorphisms} gives $\psi(A^F(e_i))=A^F(\psi(e_i))=A^F(e_i')$, and hence $e_i, e_i'$ have the same type. We have already seen that the root set of $e_i'$ has the same root set as $e_i$. Also, $\psi(a)=\psi(\phi_a(a'))=\psi_a(\phi_a(a'))=a'$ and $\psi(b)=\psi(\phi_b(b'))=\psi_b(\phi_b(b'))=b'$, by construction, showing that $\mathcal P'$ satisfies the requirements of the lemma.
		\end{proof}


\section{Proof of the main theorem}\label{Section_proof}
	The following is the main technical result of the paper --- Theorem~\ref{thm:main-path0} will follow from it via a short argument.
	Here a \emph{tight walk} of order $k$ in an $r$-uniform hypergraph $\HH$ is a sequence $v_1 \dots v_k$ of vertices in $\HH$ that need not be distinct, such that $(v_i, \dots, v_{i+r-1})$ is an edge in $\HH$ for $i \in [k-r+1]$.
	The \emph{degree} of a vertex $v$ in a hypergraph is the number of edges containing $v$.

	\begin{theorem}\label{Theorem_walk}
		For all integers $r\geq 3$ and $s \ge 2$, there exists an integer $p$ such that: for all sufficiently large $n$, there exists an $r$-uniform hypergraph $\mathcal H$ on $n$ vertices, with maximum degree at most $p$, such that in every $s$-colouring of $\mathcal H$ there is a monochromatic tight walk of order at least $n/p$ in which every vertex is used at most $p$ times.
	\end{theorem}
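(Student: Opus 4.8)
The plan is to run the iteration from the proof overview, the ``structure'' being a forest whose height grows by one each round. Fix $r\ge 3$, $s\ge 2$, and enlarge $s$ to $\max\{s,r\}$. By Lemma~\ref{Lemma_disconnection_bounder} there are $h=h(r,s)$ and $D^\ast=D^\ast(r,s)\gg h$ such that no $s$-colouring of $\mathcal G^t\otimes_rF$ is $1$-disconnected when $F$ is a forest of balanced $D^\ast$-ary trees of height $\ge h$. Working backwards through $h$ rounds, fix constants $d_0\gg\dots\gg d_{h-1}\ge D^\ast$ (arities), $k_0\ll\dots\ll k_{h-1}=:p$ (separation and shortness scales) and powers $c\le t_0\le\dots\le t_{h-1}$ with $t_i\le k_i$ (where $c$ is the constant of Lemma~\ref{Lemma_expander_ramsey_MAIN}), all depending only on $r,s$ and chosen generously enough that every invocation below of Lemmas~\ref{Lemma_cleaning_forest}, \ref{Lemma_separate_trees}, \ref{Lemma_versatile_set_existence}, \ref{Lemma_distances_in_short_trees}, \ref{Lemma_short_separated_homomorphism}, \ref{Lemma_expander_ramsey_MAIN}, \ref{Lemma_augmentation_properties} and \ref{Lemma_path_between_versatile_sets} is legitimate. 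Let $\eps>0$ be small with $200h\eps<\tfrac12$, let $G$ be a connected $\eps$-expander on $(1+o(1))n$ vertices with $\Delta(G)\le\eps^{-2}$ (Lemma~\ref{Lemma_expander_existence}), and put $\mathcal H:=\mathcal G^p=G^p_{(r)}$ on exactly $n$ vertices; since $\Delta(G)=O(1)$ we have $\Delta(\mathcal H)=O_{r,s}(1)$, which we absorb into $p$. Fix an $s$-colouring $\chi$ of $\mathcal H$ and suppose for contradiction that $\chi$ has no monochromatic tight walk of order $\ge n/p$ in which each vertex occurs at most $p$ times. I will construct, for $i=0,\dots,h-1$, a set $U_i\subseteq V(G)$ with $|U_i|\ge(1-200i\eps)|V(G)|\ge n/2$ and a $U_i$-forest $F_i$ of balanced $d_i$-ary trees of height $i+1$, which is $k_i$-separated with all levels $k_i$-short on $G[U_i]$, and such that the pullback colouring $\chi\circ\pi$ makes $\mathcal G^{t_i}\otimes_rF_i$ clean and \emph{$k$-disconnected on $(G[U_i],F_i)$ for every $k$} --- equivalently, with no monochromatic tight walk of order $\le 3r$ between independent $(r-1)$-sets of the same type inside a single tree of $F_i$. (The pullback makes sense because $\pi$ maps edges of $\mathcal G^{t_i}\otimes_rF_i$ to edges of $\mathcal G^p=\mathcal H$, by Lemma~\ref{Lemma_short_separated_homomorphism}.) For $F_0$ take a cleaned (Lemma~\ref{Lemma_cleaning_forest}) and separated (Lemma~\ref{Lemma_separate_trees}) subforest of a forest of $d_0$-ary stars on $V(G)$; its disconnectedness is Observation~\ref{Observation_disconnected_star_family}. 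Once $F_{h-1}$ is built, its trees have height $h$ and arity $d_{h-1}\ge D^\ast\gg h$, so it is in particular $1$-disconnected --- contradicting Lemma~\ref{Lemma_disconnection_bounder}. Hence it suffices to perform the step $F_i\mapsto F_{i+1}$.

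\textbf{The step, I: a long path yields the tight walk.} For every $v\in U_i$ and $\tau\in\types(i+1,r-1)$ fix, via Lemma~\ref{Lemma_versatile_set_existence}, a $3r$-versatile $(r-1)$-subset $e_\tau(v)\subseteq L(F_i(v))$ of type $\tau$. Define an auxiliary edge-colouring $\phi$ of $G[U_i]^c$: colour $xy$ by a pair $(\tau,a)$ if there is a monochromatic colour-$a$ tight walk of order $\le 3r$ from $e_\tau(x)$ to $e_\tau(y)$ in a suitable power $\mathcal G^{t^\ast_i}\otimes_rF_i$ (any such pair if several exist), and by a fresh colour $\star$ otherwise (and on non-edges of $G[U_i]^c$). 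There are $|\types(i+1,r-1)|\cdot s=O_{r,s}(1)$ non-$\star$ colours; apply Lemma~\ref{Lemma_expander_ramsey_MAIN} with these as the ``path'' colours, $\star$ as the ``clique'' colour, and clique size $d_i$. If $\phi$ has a monochromatic path $Q=w_0\dots w_L$ of order $\ge|U_i|/c=\Omega(n)$ in some colour $(\tau,a)$, choose for each $j$ the corresponding colour-$a$ tight walk $P_j$ of order $\le 3r$ from $e_\tau(w_{j-1})$ to $e_\tau(w_j)$. Separation makes $e_\tau(w_{j-1})$ and $e_\tau(w_j)$ disjoint and the end of $P_j$ equal to the start of $P_{j+1}$ as ordered tuples, so the concatenation $\mathcal P:=P_1\cdots P_L$ is a single monochromatic colour-$a$ tight walk of order $\Omega(n)$ in $\mathcal G^{t^\ast_i}\otimes_rF_i$; by Lemma~\ref{Lemma_short_separated_homomorphism}, $\pi(\mathcal P)$ is a monochromatic tight walk of order $\Omega(n)\ge n/p$ in $\mathcal H$. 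Finally, $k_i$-shortness and Lemma~\ref{Lemma_distances_in_short_trees} confine each $F_i(x)$ to a bounded ball around $x$, so each vertex $u$ of $\mathcal H$ lies in $\pi(F_i(x))$ for only $O_{r,s}(1)$ roots $x$ (and contributes at most one leaf per such tree); since $Q$ is a \emph{path} its vertices are distinct and, balls in $G$ being bounded, only $O_{r,s}(1)$ of the $P_j$ meet any given such $F_i(x)$. Hence every vertex of $\mathcal H$ occurs in $\pi(\mathcal P)$ at most $O_{r,s}(1)\le p$ times --- contradicting our assumption.

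\textbf{The step, II: cliques yield the next forest.} Otherwise Lemma~\ref{Lemma_expander_ramsey_MAIN} gives pairwise vertex-disjoint monochromatic copies of $K_{d_i}$ in colour $\star$ covering all but $200\eps|U_i|$ vertices of $U_i$; let $U_{i+1}$ be their union. For a clique $\{v_1,\dots,v_{d_i}\}$ and each $v_a$ in it, apply the augmentation of Definition~\ref{Definition_augment_one_tree} to make a new tree rooted at $v_a$ whose immediate subtrees are the trees $F_i(v_b)$ for the $d_i-1$ other vertices $v_b$ of this clique; this is legitimate since $F_i$ is $k_i$-separated (so the relevant $\pi$-images are disjoint), and by Lemma~\ref{Lemma_augment_one_tree_properties} the resulting $V(G[U_{i+1}])$-forest augments $F_i$. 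Pruning to uniform arity $d_{i+1}$ and re-separating and re-cleaning (Lemmas~\ref{Lemma_separate_trees}, \ref{Lemma_cleaning_forest}) then restores $k_{i+1}$-separation, $k_{i+1}$-shortness (Lemma~\ref{Lemma_Augmentation_shortness}, using $d_G(v_a,v_b)\le c\le k_{i+1}$ inside a clique) and cleanness. It remains to verify that $F_{i+1}$ is $k$-disconnected for every $k$. Let $S,T\subseteq L(F_{i+1}(v_a))$ be independent $(r-1)$-sets of the same type joined by a monochromatic tight walk $P$ of order $\le 3r$. As $\rd^{F_{i+1}}$ restricted to $F_{i+1}(v_a)^-$ is a monomorphism into $F_i$ (Lemma~\ref{Lemma_augment_one_tree_properties}, Observation~\ref{Observation_rd_doesnt_change_on_subforests}), and by Lemma~\ref{Lemma_augmentation_properties} preserves colours and types (up to deleting roots) and maps edges to edges of a suitable power $\mathcal G^{t^\ast_i}\otimes_rF_i$, the image $\rd(P)$ is a monochromatic tight walk of order $\le 3r$ with $\rd(S),\rd(T)$ independent of the same type. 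If $S,T$ lie in one attached copy of $F_i(v_b)$, then $\rd(S),\rd(T)\subseteq L(F_i(v_b))$ and $\rd(P)$ violates the inductive ``$k$-disconnected for every $k$'' --- impossible. If $S,T$ lie in distinct attached copies of $F_i(v_b)$ and $F_i(v_c)$, then $\rd(S)\subseteq L(F_i(v_b))$, $\rd(T)\subseteq L(F_i(v_c))$ with $v_b\neq v_c$ in a common clique, so Lemma~\ref{Lemma_path_between_versatile_sets} and cleanness convert $\rd(P)$ into a monochromatic tight walk of order $\le 3r$ between the designated versatile sets $e_\sigma(v_b)$, $e_\sigma(v_c)$ (for $\sigma$ the common type), contradicting $\phi(v_bv_c)=\star$. (An $(r-1)$-set meeting two copies cannot belong to an independent pair, as its least common ancestor is then the root of $F_{i+1}(v_a)$.) So no such $P$ exists, $F_{i+1}$ satisfies the invariant, and the step --- hence the proof --- is complete.

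\textbf{Main obstacle.} The hard part is Step~I. The short tight walks $P_j$ on consecutive edges of $Q$ share only their end \emph{types}, with a priori unrelated vertex sets, and tight walks cannot be spliced at arbitrary points; anchoring the auxiliary colouring at the fixed versatile sets $e_\tau(\cdot)$, and being able (Lemmas~\ref{Lemma_versatile_set_existence} and \ref{Lemma_path_between_versatile_sets}) to move any tight walk onto those sets without changing the types of its edges, is precisely what makes consecutive walks share a genuine $(r-1)$-tuple --- both for the concatenation in Step~I and for the disconnectedness check in Step~II (where cleanness is what then also keeps the colour). Establishing the bounded-multiplicity clause forces the separation and shortness of the forests to be maintained through every round, with the distance bookkeeping of Lemma~\ref{Lemma_distances_in_short_trees}; and keeping the whole parameter hierarchy --- the height $h$, the external arities $d_i$, the intermediate arities inside each cleaning and versatility application, the powers $t_i$ and $t^\ast_i$, and the scales $k_i$ --- mutually consistent across all $h$ iterations is the bookkeeping heart of the argument. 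Each individual step is routine once this qualitative picture is in place.
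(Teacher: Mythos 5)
Your proof follows the paper's strategy essentially step for step: the same expander host, the same iterative construction of forests of growing height maintained clean, separated, short, and disconnected, the same auxiliary edge-colouring on a power of $G$, the same dichotomy from Lemma~\ref{Lemma_expander_ramsey_MAIN}, the same use of versatile sets and Lemma~\ref{Lemma_path_between_versatile_sets} to concatenate and transplant short tight walks, and the same terminal contradiction via Lemma~\ref{Lemma_disconnection_bounder}. Two small variations are worth noting, both legitimate: you anchor the auxiliary colouring at the pre-selected versatile sets $e_\tau(\cdot)$ rather than at arbitrary type-$\tau$ sets, which makes the concatenation in Step~I immediate and pushes the single use of Lemma~\ref{Lemma_path_between_versatile_sets} into the disconnectedness check (the paper does the opposite); and you maintain the stronger invariant ``$k$-disconnected for every $k$'' (i.e.\ no short monochromatic tight walk at all between independent same-type pairs within a tree, in the relevant power), which dispenses with the paper's explicit $\lVert P\rVert_G\le c_i$ clause and the degradation from $c_i$- to $c_{i+1}$-disconnectedness; this stronger invariant is achievable and arguably cleaner.

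One genuine slip, though easily repaired: you fix the powers so that $t_0\le\cdots\le t_{h-1}$ (and likewise $k_0\ll\cdots\ll k_{h-1}$ for separation), but the iteration forces these to \emph{decrease}, not increase. In the disconnectedness check for $F_{i+1}$, the map $\rd^{F_{i+1}}$ sends a tight walk in $\mathcal G^{t_{i+1}}\otimes_r F_{i+1}$ into $\mathcal G^{t_{i+1}+2a_i}\otimes_r F_i$ (Lemma~\ref{Lemma_augmentation_properties}\ref{itm:augment-e}), where $a_i$ is the level-1 shortness introduced by augmenting over grey cliques; to invoke the invariant for $F_i$ (in your ``same subtree'' case) this image must lie inside $\mathcal G^{t_i}\otimes_r F_i$, forcing $t_i\ge t_{i+1}+2a_i$. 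For the same reason the separation scale must dominate the current power and hence also decreases — this is exactly the paper's chain $d_1\gg b_1\gg c_1\gg a_1\gg d_2\gg\cdots$. Since you explicitly defer to choosing the constants ``generously enough'', this reads as a slip in the write-up rather than a flaw in the argument, but as stated the hierarchy is inconsistent and the inductive step would not close.
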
 

	\begin{proof}
		Use Lemma~\ref{Lemma_disconnection_bounder} to pick a number $h:=h(r,s)$ satisfying that lemma (i.e.\ if $F$ is a balanced $d$-ary $V(G)$-forest of height $h$ then no $s$-colouring of $\G^t \otimes_r F$ is $1$-disconnected, for every non-negative integer $t$).
		Pick numbers $p,p',\eps,a_1,b_1,c_1,d_1,\dots,a_h,b_h,c_h,d_h$, as follows.
		\begin{align*}
			p\gg p'\gg  d_1\gg b_1\gg c_1\gg a_1 & \gg d_2\gg b_2\gg c_2\gg a_2\gg \dots \\
			\dots & \gg d_h\gg b_h\gg c_h\gg a_h\gg \eps^{-1}\gg h, r,s.
		\end{align*}
		Use Lemma~\ref{Lemma_expander_existence} to obtain a connected graph $G$ on $n$ vertices which is an $\eps$-expander with maximum degree at most $\Delta:=\eps^{-2}$. Order $V(G)$ arbitrarily.
		Let $\HH$ be the $r$-uniform hypergraph $\mathcal G^{p'}$, noting that $\Delta(\HH)\leq \binom{(\Delta+1)^{p'}}{r}\leq p$. 
		Fix an $s$-colouring of $\HH$, and suppose for contradiction that there is no monochromatic tight walk of length at least $n/p$ in $\HH$ in which every vertex is used at most $p$ times.

		Throughout the proof, all graphs of the form $\mathcal G^t\otimes F$ we consider satisfy $t\leq d_1$, and $F$ being balanced, $t$-separated, of height at most $h$, and having all levels $d_1$-short. 
		For such hypergraphs, Lemma~\ref{Lemma_short_separated_homomorphism} tells us that $\pi: \mathcal G^t\otimes F \to \mathcal G^{p'}$ is a hypergraph homomorphism (i.e.\ maps edges to edges). This gives a natural $s$-colouring on $\mathcal G^t\otimes F$ where we colour $e\in \mathcal G^t\otimes F$ by the colour of $\pi(e)$ in $\HH = \mathcal G^{p'}$. Note that, this way, the following holds. 
		\begin{enumerate}[label = \ding{71}]
			\item \label{itm:image-walk}
				If $W$ is a monochromatic tight walk in  $\mathcal G^t\otimes F$, with $t \le d_1$, then $\pi(W)$ is a monochromatic tight walk in $\HH$.
		\end{enumerate}

		We will construct subsets $V(G)=U_1\supseteq \dots\supseteq U_h$, and $V(G)$-forests $F_1, \dots, F_h,$ with the following properties holding for every $i \in [h]$.
		\begin{enumerate}[label = \rm(F\arabic*)]
			\item \label{itm:forest-size-U}
				$|U_i|\geq (1-200\eps)^{i}n$ (which in particular implies $|U_i|\geq (1-200h\eps)n\geq n/2$).
			\item \label{itm:forest-height}
				Trees in $F_i$ are balanced $c_i$-ary of height $i$ with $\pi_0(F_i)=U_i$. 
			\item \label{itm:forest-separated}
				All levels in $F_i$  are $d_1$-short on $G$, and $F_{i}$ is $b_i$-separated on $G$. 
			\item \label{itm:forest-clean}
				The hypergraph $\mathcal G^{c_i}\otimes F_{i}$  is cleanly coloured.
			\item \label{itm:forest-disconnected}
				The colouring on $\mathcal G^{c_i}\otimes F_{i}$  is $c_i$-disconnected on $(G, F_i)$.
			\item \label{itm:forest-augmentation}
				$F_{i}$ is an augmentation of $F_{i-1}$, for $i\geq 2$.
		\end{enumerate}
		We will show that under the assumption that $\HH$ has no monochromatic tight walk of length at least $n/p$ where every vertex is used at most $p$ times, then sequences as above can indeed be constructed.
		In particular, we have a forest $F_h$ which is a balanced $d_h$-ary $V(G)$-forest of height $h$, such that $\G^{d_h} \otimes F_h$ is $1$-disconnected on $(G, F_h)$. This is a contradiction to the choice of $h$, proving the theorem. 

		We begin by constructing $U_1, F_1$ satisfying \ref{itm:forest-size-U} -- \ref{itm:forest-disconnected}.
		Set $U_1=V(G)$ (ensuring \ref{itm:forest-size-U}). Noting that  ``$G$  connected $\implies$ $\delta(G^{d_1})\geq d_1$'', for each $v\in V(G)$ pick distinct neighbours $u_1^v, \dots, u_{d_1}^v$ of $v$ in $G^{d_1}$. Define $F_1''(v)$ by $V(F_1''(v))=\left\{(v,v), (v, u_1^v), \dots, (v,u_{d_1}^v)\right\}$, with $(v,v)$ connected to all other vertices, making it an ordered $V(G)$-star (with the ordering of the leaves inherited from $V(G)$, and $(v,v)$ preceding all leaves). Taking the union of these (and ordering them according to the ordering of the roots, inherited from the ordering of $V(G)$), we get a $V(G)$-forest $F_1''$ of (balanced) $d_1$-ary trees of height $1$.
		Note that each $F_1''(v)$ has all levels $d_1$-short in $G$ since it is a subgraph of $G^{d_1}$, implying that $F_1''$ has all levels $d_1$-short.
		Apply Lemma~\ref{Lemma_separate_trees} (with $(F,G,d,b,h,\Delta)_{\ref{Lemma_separate_trees}} = (F_1'',G,d_1,b_1,1,\Delta)$) to get a subforest $F_1'\leq F_1''$ of balanced $b_1$-ary trees of height $1$ which is $b_1$-separated on $G$.
		Now apply Lemma~\ref{Lemma_cleaning_forest} (with $(F, \HH, d,b,h,r,s,\Delta)_{\ref{Lemma_cleaning_forest}} = (F_1',\G^{c_1},b_1,c_1,1,r,s,\binom{(\Delta+1)^{c_1}}{r})$) to get a $c_1$-ary subforest $F_1$ satisfying \ref{itm:forest-size-U} to \ref{itm:forest-clean}. Property \ref{itm:forest-disconnected} holds by Observation~\ref{Observation_disconnected_star_family}.  

		Now suppose that for some $i \in [h-1]$, we have constructed $U_i, F_i$ satisfying \ref{itm:forest-size-U} to \ref{itm:forest-disconnected}. 
		We will show how to construct $U_{i+1}$ and $F_{i+1}$ satisfying \ref{itm:forest-size-U} to \ref{itm:forest-augmentation}, thereby proving the theorem.
		Consider an auxiliary colouring on the graph $G^{a_{i}}[U_i]$ with colour set $([s]\times \types(i,r-1))\cup \{\grey\}$, such that each edge $uv$ is coloured as follows.
		\begin{itemize}
			\item 
				Colour $uv$ with colour $(c,\tau) \in [s] \times \types(i,r-1)$ if there are $(r-1)$-sets $U \subseteq L(F_i(u))$ and $V \subseteq L(F_i(v))$, both of type $\tau$, and a colour $c$ tight walk $P$ in $\G^{c_i} \otimes F_i$ from $U$ to $V$ of length at most $3r$, which satisfies $||P||_{G} \le c_i$.
			\item 
				If the previous item fails for all $(c,\tau) \in [s] \times \types(h,r-1)$, colour $uv$ $\grey$.
		\end{itemize} 
		(Notice that edges $uv$ could have more than one colour in $[s] \times \types(i,r-1)$.)
		By Lemma~\ref{Lemma_expander_ramsey_MAIN} (applied to $G$ with $(U,c,d,\eps,s)_{\ref{Lemma_expander_ramsey_MAIN}} = (U_i,a_i,d_{i+1}+1,\eps, s\cdot |\types(i,r-1)|)$), either we get a non-grey monochromatic path of length $n/a_{i}$, or we can cover at least $(1-200\eps)|U_{i}|$ vertices by vertex-disjoint grey $K_{d_{i+1}+1}$'s. 

		\begin{claim} \label{claim:non-grey-mono-path}
			There is no non-grey monochromatic path of length at most $n/a_i$.
		\end{claim}
		\begin{proof}
			Suppose to the contrary that $P = p_1 \dots p_m$ is a monochromatic non-grey path of length at least $n/a_i$ in the above colouring.
			Let $(c, \tau)\in [s]\times \types(i,r-1)$ be the colour of $P$.
			For each $j \in [m]$, use Lemma~\ref{Lemma_versatile_set_existence} (with $(T,\tau,d,h,s,t)_{\ref{Lemma_versatile_set_existence}} = (F_i(p_j),\tau,c_i,i, r-1,3r)$) to pick a type $\tau$ set $A_j \subseteq L(F_i(p_j))$ which is $3r$-versatile in $F_i(p_j)$. 
			For each $j \in [m-1]$, since $p_jp_{j+1}$ is a colour $(c, \tau)$ edge, there is a colour $c$ tight walk $P_j$ in $\mathcal G^{c_i}\otimes F_i$ from a type $\tau$ subset $U_i\subseteq L(F_i(p_j))$ to a type $\tau$ subset $V_j\subseteq L(F_i(p_{j+1}))$, whose length is at most $3r$ and which satisfies $||P_j||_{G}\leq c_{i}$. 
			By Lemma~\ref{Lemma_path_between_versatile_sets} (with $(F,\HH,\mathcal{P},u_a,u_b,a,b,a',b',k,r)_{\ref{Lemma_path_between_versatile_sets}} = (F_i,\G^{c_i},P_j,p_j,p_{j+1},U_j,V_j,A_j,A_{j+1},3r,r)$), there is a tight walk $Q_j$ from $A_j$ to $A_{j+1}$, of the same length as $P_j$, and whose edges have the same types and roots as corresponding edges of $P_j$. Since the colouring of $G^{c_i}\otimes F_i$ is clean (by \ref{itm:forest-clean}), we get that $Q_j$ is also colour $c$. Thus $W=Q_1 \dots Q_{m-1}$ is a colour $c$ tight walk in $\mathcal G^{c_i}\otimes F_i$. 

			Recall that, by \ref{itm:image-walk}, the image $\pi(W)$ of $W$ in $\HH$ is a monochromatic tight walk, of length at least $n/a_i \ge n/p$. We claim that each vertex appears at most $p$ times in $\pi(W)$. This would contradict the assumption on $\HH$, proving the claim.
			To show that no vertex appears too much in $\pi(W)$, notice that any vertex $u$ can appear in $\pi(Q_j)$ only for indices $j$ with $d_G(u, p_j)\leq c_i+hd_1$ (suppose $u\in \pi(Q_j)$. We have $\pi_0(P_j) = \pi_0(Q_j)$, since $P_j$ and $Q_j$ have the same root set. It follows that $p_j\in \pi_0(P_j)=\pi_0(Q_j)$, and $||Q_j||_{G}=||P_j||_{G}$. Using  Lemma~\ref{Lemma_distances_in_short_trees} gives $d_G(u, p_j)\leq ||Q_j||_{G} +hd_1=||P_j||_{G} + hd_1\leq c_i+hd_1$). 
			Therefore, every vertex is used at most $3r(\Delta+1)^{c_i+hd_1}\ll p$ times in $\pi(W)$, as claimed.  
		\end{proof}

		By the last claim, the first outcome does not hold, and so the second one does, meaning that we can cover at least $(1-200\eps)|U_{i}|$ vertices by vertex-disjoint grey $K_{d_{i+1}+1}$'s. Let $U_{i+1}$ be the set of vertices covered by by these cliques (which ensures that \ref{itm:forest-size-U} holds for $U_{i+1}$).
		For a $K_{d_{i+1}+1}$ with vertices $\{v_0, \dots, v_{d_{i+1}}\}$ define trees $F''_{i+1}(v_j)$ for $j \in [0,d_{i+1}]$, as follows.
		\begin{equation*}
			F''_{i+1}(v_j):=\left(v_j; F_i(v_0); \dots ; F_i(v_{j-1}); F_i(v_{j+1}); \dots; F_i(v_{d_{i+1}})\right)
		\end{equation*}
		(This is well defined due to $\pi(F_i(v_0)), \dots, \pi(F_i(v_{d_{i+1}}))$ being vertex-disjoint, which is true because \ref{itm:forest-separated} tells us that $F_i$ is $b_i$-separated in $G$, and because $d_G(v_s, v_t)\leq a_i\leq b_i$ for all $i$, as each $v_jv_{j'}$ is an edge in $G^{a_i}$.) Note that since $d_G(v_j, v_{j'})\leq a_i$ for all $j,j'$, level $1$ of each $F''_{i+1}(v_j)$ is $a_i$-short, and all other levels are $d_1$-short (from Lemma~\ref{Lemma_Augmentation_shortness} and \ref{itm:forest-separated}). 
		Define
		$F_{i+1}''=\bigcup_{v \in U_{i+1}}F''(v)$ (ordering so that $F''(u) < F''(v)$ if and only if $u < v$ in $V(G)$). Then $F_{i+1}''$ is a $V(G)$-forest with $\pi_0(F_{i+1}'') = U_{i+1}$.
		By Lemma~\ref{Lemma_augment_one_tree_properties} and Observation~\ref{Observation_rd_doesnt_change_on_subforests}, $\rd^{F_{i+1}''}|_{F_{i+1}''(v)^-}:F_{i+1}''(v)^-\to F_i$ is an monomorphism for all $v \in U_{i+1}$, i.e.\ $F_{i+1}''$ is an augmentation of $F_{i}$.

		Apply Lemma~\ref{Lemma_separate_trees} (with $(F,G,d,b,h,\Delta)_{\ref{Lemma_separate_trees}} = (F_{i+1}'',G,d_{i+1},b_{i+1},i+1,\Delta)$) to get a subforest $F_{i+1}'\leq F_{i+1}''$ of $b_{i+1}$-ary trees which is $b_{i+1}$-separated on $G$.
		Apply Lemma~\ref{Lemma_cleaning_forest} (with $(F,\HH,d,b,h,s,r,\Delta)_{\ref{Lemma_cleaning_forest}} = (F_{i+1}',\G^{c_{i+1}}, b_{i+1},c_{i+1},i+1,s,r,\binom{(\Delta+1)^{c_{i+1}}}{r})$) to get a $c_{i+1}$-ary subforest $F_{i+1} \le F_{i+1}'$ satisfying \ref{itm:forest-size-U} to \ref{itm:forest-clean} and \ref{itm:forest-augmentation}. 
		It remains to show \ref{itm:forest-disconnected}.
		\begin{claim}
			$\mathcal G^{c_{i+1}}\otimes F_{i+1}$   is $c_{i+1}$-disconnected on $(F_{i+1},G)$. 
		\end{claim}
		\begin{proof}
			Fix some $v \in v(G)$, as in the definition of ``$c_{i+1}$-disconnected''.
			Let $X,Y\subseteq L(F_{i+1}(v))$ be independent $(r-1)$-sets of the same type, and let $P$ be a length at most $3r$ monochromatic tight walk from $X$ to $Y$ in $\mathcal G^{c_{i+1}}\otimes F_{i+1}$. 
			Let $c$ be the colour of $P$ and $\tau$ be the type of $X$ and $Y$.
			By Lemma~\ref{Lemma_augmentation_properties}~\ref{itm:augment-e} (applied with $(F',F,G,s,t,k)_{\ref{Lemma_augmentation_properties}} = (F_{i+1},F_i,c_{i+1},c_i,a_i)$), we have that $\rd^{F_{i+1}}(P)$ is also a colour $c$ tight walk in $\mathcal G^{c_i}\otimes F_{i}$ from $\rd^{F_{i+1}}(X)$ to $\rd^{F_{i+1}}(Y)$, and $\rd^{F_{i+1}}(X)$ and $\rd^{F_{i+1}}(Y)$ are independent and are both of type $\tau^-$ (to see that $\rd^{F_{i+1}}(P)$ and $P$ have the same colour, recall that for any edge $e \in \mathcal G^{t}\otimes F$ its colour is the colour of $\pi(e)$. By \ref{Lemma_augmentation_properties}~\ref{itm:augment-a}, we have $\pi(\rd^{F_{i+1}}(e))=\pi(e)$ showing that $e$ and $\rd^{F_{i+1}}(e)$ have the same colour).  

			Let $\{v, u_1, \dots, u_{d_{i+1}}\}$ be the grey clique containing $v$, noting that children of $(v,v)$ in $F_{i+1}$ are contained in $\{(u_1,v), \dots, (u_{d_{i+1}},v)\}$. This shows that for $\ell\in L(F(v))$, we have $\pi_0(\rd^{F_{i+1}}(\ell))=\pi_1^{F_{i+1}}(\ell)\subseteq \{u_1, \dots, u_{d_{i+1}}\}$. 
			This tells us that $\pi_0(\rd^{F_{i+1}}(X)),\,\pi_0(\rd^{F_{i+1}}(Y))\subseteq \{u_1, \dots, u_{d_{i+1}}\}$, which, together with  Observation~\ref{Observation_indepndent_sets_have_single_root}, tells us that $\pi_0(\rd^{F_{i+1}}(X))=x$ and $\pi_0(\rd^{F_{i+1}}(Y))=y$ for some $x,y \in \{u_1, \dots, u_{d_{i+1}}\}$ (possibly with $x=y$), or, equivalently, $\rd^{F_{i+1}}(X)\subseteq F_{i+1}(x)$ and $\rd^{F_{i+1}}(Y)\subseteq F_{i+1}(y)$.

			We claim that $||\rd^{F_{i+1}}(P)||_G \ge c_{i}$. Indeed, if $x\neq y$, since the edge $xy$ is not coloured $(c,\tau)$ (it is coloured grey; in particular, it is indeed an edge in $G^{a_i}$), we have $||\rd^{F_{i+1}}(P)||_{G} \ge c_{i}$. 
			Otherwise, $x = y$ and then by $c_i$-disconnectedness of $F_{i}$, we again have $||\rd^{F_{i+1}}(P)||_{G}\geq c_i$. 
			By Lemma~\ref{Lemma_augmentation_properties} \ref{itm:augment-d}, we have $||P||_{G}> ||\rd^{F_{i+1}}(P)||_{G}-2a_{i}\geq c_i-2a_{i} \ge  c_{i+1}$, as required for showing $c_{i+1}$-disconnectedness.
		\end{proof}
		We have proved that $F_{i+1}$ satisfies \ref{itm:forest-size-U} to \ref{itm:forest-augmentation}, completing the proof of the theorem.
	\end{proof}

	We use $\mathcal H[t]$ to denote the $t$-blow-up of a hypergraph $\mathcal H$ --- the hypergraph formed by replacing each vertex $v$ by $t$ copies $v[1], \dots, v[t]$ and letting $E(\HH[t])$ be the set of $r$-sets $e$ of form $\{u_1[i_1], \dots, u_r[i_r]\}$ where $(u_1, \dots, u_r) \in E(\HH)$ and $i_1, \dots, i_r \in [t]$. Now we deduce the main theorem of the paper.
	\begin{proof}[Proof of Theorem~\ref{thm:main-path0}]
		Let $d\gg p,r,s$.
		Use Theorem~\ref{Theorem_walk} to find an $r$-uniform hypergraph $\mathcal H$ with $n$ vertices, maximum degree at most $p$, and such that ``in each $s$-colouring of $\mathcal H$ there is a tight walk of length at least $n/p$ in which every vertex is used at most $p$ times''. To prove \Cref{thm:main-path0}, it is sufficient to show that $\mathcal H[d]\sarrow \mathcal P_{n/p}$ (since $e(\mathcal H[d])\leq e(H) \binom{pr}{r} \le n \cdot p \binom{pr}{r}$). Consider an $s$-colouring of $\mathcal H[d]$. Let $S$ be a $V(\mathcal H)$-forest formed by assigning an (arbitrary) size $d$ star to each vertex, and define $\phi : \HH[d] \to \HH \otimes S$, by mapping each of $v[1], \dots, v[d]$ to a different leaf of the star $S(v)$, for every $v \in V(\HH)$. Notice that $\phi$ is an isomorphism, and colour $\HH \otimes S$ according to $\phi$, namely by colouring each edge $e$ by the colour of $\phi^{-1}(e)$.

		Lemma~\ref{Lemma_cleaning_forest} (applied with $(F,\HH,d,b,h,s,r,\Delta)_{\ref{Lemma_cleaning_forest}} = (S,\HH,d,p,1,s,r,p)$), gives us a $p$-ary subforest $S'\leq  S$ with $\mathcal H \otimes  S'$ cleanly coloured. By possibly relabelling the indices of the vertices $v[1], \dots, v[d]$ for $v \in V(\HH)$, we may assume that $\phi^{-1}(L(S'(v))) = \{v[1], \dots, v[p]\}$, for every $v \in V(\HH)$.
		Let $\mathcal H'$ be the subgraph of $\mathcal H[p]$ on $\{v[1]: v \in V(\HH)\}$, and notice that $\HH'$ is isomorphic to $\HH$. 
		The property of Theorem~\ref{Theorem_walk} thus gives us a monochromatic, say colour $c$, tight walk $\mathcal W$ in $\HH'$ of order at least $n/p$, that uses each vertex at most $p$ times. 
		Then $\W$ can be written as $w_1[1] \dots w_{\ell}[1]$, where $w_i \in V(\HH)$ and $\ell \ge n/p$. Let $m_i$ be the number of repetitions of the vertex $w_i$ in the subsequence $w_1, \dots, w_i$, noting that $m_i\leq p$ always. Now take $\mathcal W'=w_1[m_1] \dots w_{\ell}[m_{\ell}]$. 
		
		We claim that $\W'$ is a colour $c$ tight path of order $\ell$ in $\HH[p]$. Indeed, first notice that $w_i[m_i]$ is a vertex in $\HH[p]$ for every $i \in [\ell]$, as $m_i \le p$. Second, notice that the vertices $w_i[m_i]$ are distinct, by definition of $m_i$. Third, since $(w_i[1], \dots, w_{i+r-1}[1])$ is an edge in $\HH'$, we have that $\{w_i, \dots, w_{i+r-1}\}$ is an edge in $\HH$, showing that $(w_i[m_i], \dots, w_{i+r-1}[m_{i+r-1}])$ is an edge in $\HH[p]$. Finally, by cleanliness we have that the colours of $(w_i[1], \dots, w_{i+r-1}[1])$ and $(w_i[m_i], \dots, w_{i+r-1}[m_{i+r-1}])$ are the same, and thus by choice of $\W$ they are both $c$. Therefore, indeed, $\W'$ is a tight path of order $\ell \ge n/p$ in $\HH[p]$, proving the theorem. 
	\end{proof}

\section{Conclusion} \label{sec:conc}

    In this paper (and its previous version \cite{letzter2021size}) we studied families of bounded degree hypergraphs whose size-Ramsey number is linear in their order. Not much is known about bounded degree hypergraphs, or even graphs, whose size-Ramsey numbers are not linear. To this end, we would like to reiterate the conjecture of R\"odl and Szemer\'edi~\cite{rodl2000size} mentioned in the introduction, as perhaps the most interesting question in this area. 
    
    \begin{conj}
        For every $d\geq 3$ there exist $\eps > 0$ and a sequence of graphs $(G_n)$ on $n$ vertices and maximum degree at most $d$ such that $\rhat(G_n) = \Omega(n^{1+\eps})$.
    \end{conj}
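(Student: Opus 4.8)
The statement in question is the Rödl--Szemerédi conjecture, which remains open; what follows is therefore a proposed line of attack rather than a complete argument, built around sharpening the probabilistic method that underlies the current records $\rhat(G_n)\geq cn(\log n)^{1/60}$ (Rödl--Szemerédi) and $\rhat(G_n)\geq cn\exp(c\sqrt{\log n})$ (Tikhomirov). Fix $d\geq 3$. The goal is a bounded-degree graph $H=G_n$ on $n$ vertices such that \emph{every} host $G$ with $e(G)\leq n^{1+\eps}$ admits a $2$-colouring of $E(G)$ with no monochromatic copy of $H$; this exactly says $\rhat(H)>n^{1+\eps}$. The natural candidate for $H$ is a ``generic'' bounded-degree graph --- a uniformly random $d$-regular graph, or an explicit pseudorandom graph of high girth --- chosen so that $H$ has no small dense subgraphs and is as rigid as possible with respect to embeddings, since these are precisely the features that limit how a sparse $G$ can accommodate many copies of $H$.

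First I would set up the counting framework. For a host $G$ with $e(G)=m\leq n^{1+\eps}$, one bounds the number $N$ of copies of $H$ in $G$: since $\Delta(H)\leq d$ and $G$ is sparse, a copy of $H$ is specified by relatively little data, so one expects $N\leq m\cdot 2^{O(n)}$, with the whole difficulty living in how the exponential factor, and the overlap pattern among copies, depends on the local structure of $G$. One then $2$-colours $E(G)$ randomly and argues that with positive probability no copy of $H$ is monochromatic. A naive union bound fails, so instead one invokes either the Lovász Local Lemma --- using that copies of $H$ sharing no edge of $G$ give independent bad events --- or an entropy-compression/``local'' argument à la Tikhomirov, colouring $G$ greedily and recording a short certificate whenever a monochromatic copy would be forced, then deriving a contradiction from an encoding-length inequality. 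The pseudorandomness and girth hypotheses on $H$ are tuned so that the bad events have small dependency degree, which is what makes either argument have a chance of closing.

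The main obstacle --- and the reason the conjecture is open --- is the bookkeeping in this dependency/entropy estimate. In every known instantiation the number of copies of $H$ through a fixed edge of $G$, together with the way those copies interlock, is only controlled well enough to beat $n$ by a \emph{sub-polynomial} factor: the Local Lemma condition, or the encoding inequality, becomes tight precisely at the $n\exp(\Theta(\sqrt{\log n}))$ threshold, which Tikhomirov identified as the ceiling of the method. Reaching $n^{1+\eps}$ seems to require a genuinely different mechanism --- for instance a rigidity-type statement asserting that any $G$ with $G\to(H)_2$ for a generic bounded-degree $H$ must contain a subgraph on $\Theta(n)$ vertices of average degree $n^{\Omega(1)}$, a conclusion that is global rather than local and so cannot be reached by a union bound or a greedy certificate. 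Formulating and proving such a structural dichotomy is, in my view, the crux; without it, the probabilistic route above will faithfully reproduce the known sub-polynomial separations but not the conjectured polynomial one.
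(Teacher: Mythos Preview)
The statement is a conjecture that the paper explicitly records as \emph{open}; there is no proof in the paper to compare against. The authors merely restate the R\"odl--Szemer\'edi conjecture in the concluding section as ``perhaps the most interesting question in this area,'' after noting in the introduction that the best known lower bounds are the original $cn(\log n)^{1/60}$ of R\"odl and Szemer\'edi and Tikhomirov's $cn\exp(c\sqrt{\log n})$.

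Your write-up is appropriate in that you correctly flag the problem as open and give an honest account of the known approach and its limitations. Your identification of the $n\exp(\Theta(\sqrt{\log n}))$ ceiling for the entropy-compression method matches what the paper reports (citing Tikhomirov), and your suggestion that a polynomial separation would need a structural rather than local argument is a reasonable speculation, though of course not something the paper supplies or endorses. There is no gap to name because you do not claim to have a proof; just be aware that the paper offers nothing further here, so there is no benchmark your sketch is meant to hit.
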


    We should mention that this conjecture makes sense also in the hypergraph setting, and, in fact, the original construction from~\cite{rodl2000size} can be generalised to the hypergraph setting, as shown in~\cite{dudek2017size}.
    
    Regarding upper bounds, the only known examples of bounded degree graphs with superlinear size-Ramsey number are the R\"odl--Szemer\'edi construction \cite{rodl2000size} and its modification by Tikhomirov \cite{tikhomirov2024}. One candidate that seems worth considering is the grid graph. The $n\times n$ \emph{grid} graph $G_{n,n}$ is defined to have vertex set $[n]\times [n]$ with $uv$ being an edge whenever $u$ and $v$ differ in exactly one coordinate and the difference is exactly $1$. The best current upper bound on the size-Ramsey number of the $n \times n$ grid is of order $O(n^{5/2})$ by Conlon, Nenadov and Truji\'c \cite{CNT2023}, improving on an earlier upper bound of order $n^{3+o(1)}$ by Clemens, Miralaei, Reding, Schacht and Taraz~\cite{clemens2021size}. No lower bounds are known except the trivial $\Omega(n^2)$. It would thus be very interesting to have an answer to the following question.
    
    \begin{question}
        Is the size-Ramsey number of the $n \times n$ grid graph vertices $O(n^2)$?
    \end{question}
    
Turning to hypergraphs, Dudek, La Fleur, Mubayi and R\"odl \cite{dudek2017size} asked for the maximum size-Ramsey number of an $r$-uniform $\ell$-tree (with possibly unbounded maximum degree). Given $r\geq 3$ and $1\leq \ell \leq r-1$, an \emph{$r$-uniform $\ell$-tree} is an $r$-graph with edges $\{e_1, \ldots, e_m\}$ such that for every $i \in \{2, \ldots, m\}$ we have $\big|e_i \cap (\bigcup_{1 \le j < i} e_j)\big| \le \ell$ and $e_i \cap (\bigcup_{1 \le i < j} e_j) \subseteq e_{i_0}$ for some $i_0 \in [i-1]$ (in particular, an $r$-uniform tight path is an $r$-uniform $(r-1)$-tree).  The authors in~\cite{dudek2017size} showed that if $\T$  is an $r$-uniform $\ell$-tree then $\rhat(\T)=O(n^{\ell+1})$, which is tight when $\ell=1$, and asked whether the bound is tight for all $\ell$. We suspect the answer is positive, and reiterate the question here.
\begin{question} 
    For any  $r\geq 3$ and $1\leq \ell \leq r-1$, is it true that for every $n$ there exists an $r$-uniform 
    $\ell$-tree $\T$ of order at most $n$ such that $\rhat(\T)=\Omega(n^{\ell+1})$?
\end{question}

Another interesting problem is to study the tightness of known bounds on the size-Ramsey number of $q$-subdivisions of bounded degree graphs, where $q$ is fixed. In an aforementioned paper \cite{draganic2021rolling}, Dragani\'c, Krivelevich and Nenadov recently showed that for fixed $d$ and $q$, the size-Ramsey number of the $q$-subdivision of an $n$-vertex graph with maximum degree $d$ is bounded by $O(n^{1+1/q})$. This is tight if the host graph is a random graph but just as in the grid case, we suspect it might not be tight in general. We now pose our final question.
\begin{question}
    For fixed $d$ and $q$, is there a sequence $(G_n)$ where $G_n$ is an $n$-vertex graph with maximum degree at most $d$ such that the size-Ramsey number of the $q$-subdivision of $G_n$ is at least $\Omega(n^{1+1/q})$? 
\end{question}

    
    
	\bibliography{size-ramsey}

\providecommand{\bysame}{\leavevmode\hbox to3em{\hrulefill}\thinspace}
\providecommand{\MR}{\relax\ifhmode\unskip\space\fi MR }
\providecommand{\MRhref}[2]{%
  \href{http://www.ams.org/mathscinet-getitem?mr=#1}{#2}
}
\providecommand{\href}[2]{#2}
\begin{thebibliography}{10}

\bibitem{AllenB2022}
P.~Allen and J.~B\"ottcher, \emph{Partition universality for graphs of bounded degeneracy and degree},  (2022), arXiv:2211.15819.

\bibitem{bal2019new}
D.~Bal and L.~DeBiasio, \emph{{New lower bounds on the size-Ramsey number of a path}}, Electron. J. Combin. \textbf{29} (2022), P1.18.

\bibitem{bal2024lower}
D.~Bal, L.~DeBiasio, and A.~Lo, \emph{{A lower bound on the multicolor size-Ramsey numbers of paths in hypergraphs}}, Eu. J. Combin. \textbf{120} (2024), 103969.

\bibitem{beck1983size}
J.~Beck, \emph{{On size Ramsey number of paths, trees, and circuits. I}}, J. Graph Theory \textbf{7} (1983), 115--129.

\bibitem{Beck1990}
\bysame, \emph{{On size Ramsey number of paths, trees and circuits. II}}, Mathematics of Ramsey Theory, Springer Berlin Heidelberg, Berlin, Heidelberg, 1990, pp.~34--45.

\bibitem{ben2012long}
I.~Ben-Eliezer, M.~Krivelevich, and B.~Sudakov, \emph{Long cycles in subgraphs of (pseudo)random directed graphs}, J. Graph Theory \textbf{70} (2012), 284--296.

\bibitem{berger2020size}
Y.~Berger, S.~Kohayakawa, G.~S. Maesaka, T.~Martins, W.~Mendon{\c{c}}a, G.~O. Mota, and O.~Parczyk, \emph{{The size-Ramsey number of powers of bounded degree trees}}, J. London Math. Soc. \textbf{103} (2021), 1314--1332.

\bibitem{bollobas1986extremal}
B.~Bollob{\'a}s, \emph{Extremal graph theory with emphasis on probabilistic methods}, no.~62, American Math. Soc., 1986.

\bibitem{bollobas2001random}
\bysame, \emph{Random graphs}, no.~73, Cambridge university press, 2001.

\bibitem{CGMS}
M.~Campos, S.~Griffiths, R.~Morris, and J.~Sahasrabudhe, \emph{{An exponential improvement for diagonal Ramsey}},  (2023), arXiv:2303.09521.

\bibitem{CJMS}
M.~Campos, M.~Jenssen, M.~Michelen, and J.~Sahasrabudhe, \emph{{A new lower bound for the Ramsey numbers $R(3,k)$}},  (2025), arXiv:2505.13371.

\bibitem{clemens2019size}
D.~Clemens, M.~Jenssen, Y.~Kohayakawa, N.~Morrison, G.~O. Mota, D.~Reding, and B.~Roberts, \emph{{The size-Ramsey number of powers of paths}}, J. Graph Theory \textbf{91} (2019), 290--299.

\bibitem{clemens2021size}
D.~Clemens, M.~Miralaei, D.~Reding, M.~Schacht, and A.~Taraz, \emph{{On the size-Ramsey number of grid graphs}}, Combin. Probab. Comput. \textbf{30} (2021), no.~5, 670--685.

\bibitem{conlon2016}
D.~Conlon, Question suggested for the {ATI--HIMR Focused Research Workshop: Large--scale structures in random graphs}, {Alan Turing Institute, December, 2016}.

\bibitem{Conlon2009}
\bysame, \emph{{A new upper bound for diagonal Ramsey numbers}}, Ann. Math. \textbf{170} (2009), no.~2, 941--960.

\bibitem{Conlon2022}
D.~Conlon, R.~Nenadov, and M.~Truji\'c, \emph{{The size-Ramsey number of cubic graphs}}, Bull. London Math. Soc. \textbf{54} (2022), no.~6, 2135--2150.

\bibitem{CNT2023}
\bysame, \emph{{On the size-Ramsey number of grids}}, Combin. Probab. Comput. \textbf{32} (2023), no.~6, 874–880.

\bibitem{DKMCPS}
N.~Dragani\'c, M.~Kaufmann, D.~Munh\'a~Correia, K.~Petrova, and R.~Steiner, \emph{{Size-Ramsey numbers of structurally sparse graphs}},  (2023), arXiv:2307.12028.

\bibitem{draganic2021rolling}
N.~Dragani{\'c}, M.~Krivelevich, and R.~Nenadov, \emph{{Rolling backwards can move you forward: on embedding problems in sparse expanders}}, Trans. Am. Math. Soc. \textbf{375} (2021), 5195--5216.

\bibitem{draganicpetrova}
N.~Dragani\'c and K.~Petrova, \emph{{Size-Ramsey numbers of graphs with maximum degree three}}, J. London Math. Soc. \textbf{111} (2025), no.~3, e70116.

\bibitem{dudek2017size}
A.~Dudek, S.~La~Fleur, D.~Mubayi, and V.~R{\"o}dl, \emph{{On the size-Ramsey number of hypergraphs}}, J. Graph Theory \textbf{86} (2017), 104--121.

\bibitem{dudek2015alternative}
A.~Dudek and P.~Pra{\l}at, \emph{{An alternative proof of the linearity of the size-Ramsey number of paths}}, Combin. Probab. Comput. \textbf{24} (2015), 551--555.

\bibitem{dudek2017some}
\bysame, \emph{{On some multicolor Ramsey properties of random graphs}}, SIAM J. Discr. Math. \textbf{31} (2017), 2079--2092.

\bibitem{dudek2018note}
\bysame, \emph{{Note on the multicolour size-Ramsey number for paths}}, Electron. J. Combin. \textbf{25} (2018), P3.35.

\bibitem{erdos1947some}
P.~Erd\H{o}s, \emph{Some remarks on the theory of graphs}, Bull. Amer. Math. Soc. \textbf{53} (1947), no.~12, 292--294.

\bibitem{ES1935}
P.~Erd\H{o}s and G.~Szekeres, \emph{A combinatorial problem in geometry}, Compositio mathematica \textbf{2} (1935), 463--470.

\bibitem{erdos1978size}
P.~Erd{\H{o}}s, R.~J. Faudree, C.~C. Rousseau, and R.~H. Schelp, \emph{{The size Ramsey number}}, Period. Math. Hung. \textbf{9} (1978), 145--161.

\bibitem{friedman1987expanding}
J.~Friedman and N.~Pippenger, \emph{Expanding graphs contain all small trees}, Combinatorica \textbf{7} (1987), 71--76.

\bibitem{graham1987numbers}
R.~L. Graham and V.~R{\"o}dl, \emph{{Numbers in Ramsey theory}}, Surveys in combinatorics \textbf{123} (1987), 111--153.

\bibitem{han2020multicolour}
J.~Han, M.~Jenssen, Y.~Kohayakawa, G.~O. Mota, and B.~Roberts, \emph{{The multicolour size-Ramsey number of powers of paths}}, J. Combin. Theory, Ser. B \textbf{145} (2020), 359--375.

\bibitem{han2021size}
J.~Han, Y.~Kohayakawa, S.~Letzter, G.~O. Mota, and O.~Parczyk, \emph{{The size-Ramsey number of 3-uniform tight paths}}, Adv. Combin. (2021).

\bibitem{haxell1995induced}
P.~E. Haxell, Y.~Kohayakawa, and T.~{\L}uczak, \emph{{The induced size-Ramsey number of cycles}}, Combin. Probab. Comput. \textbf{4} (1995), 217--239.

\bibitem{kamcev2019size}
N.~Kam\v{c}ev, A.~Liebenau, D.~Wood, and L.~Yepremyan, \emph{{The size Ramsey number of graphs with bounded treewidth}}, SIAM J. Disc. Math. \textbf{35} (2021), 281--293.

\bibitem{kohayakawa2019size}
Y.~Kohayakawa, T.~Retter, and V.~R{\"o}dl, \emph{{The size Ramsey number of short subdivisions of bounded degree graphs}}, Random Struct. Algorithms \textbf{54} (2019), 304--339.

\bibitem{kohayakawa2011sparse}
Y.~Kohayakawa, V.~R{\"o}dl, M.~Schacht, and E.~Szemer{\'e}di, \emph{Sparse partition universal graphs for graphs of bounded degree}, Adv. Math. \textbf{226} (2011), 5041--5065.

\bibitem{krivelevich2019long}
M.~Krivelevich, \emph{Long cycles in locally expanding graphs, with applications}, Combinatorica \textbf{39} (2019), 135--151.

\bibitem{letzter2016path}
S.~Letzter, \emph{{Path Ramsey number for random graphs}}, Combin. Probab. Comput. \textbf{25} (2016), 612--622.

\bibitem{letzter2021size}
S.~Letzter, A.~Pokrovskiy, and L.~Yepremyan, \emph{{Size-Ramsey numbers of powers of hypergraph trees and long subdivisions}}, arXiv:2103.01942 (2021).

\bibitem{lu2018size}
L.~Lu and Z.~Wang, \emph{{On the size-Ramsey number of tight paths}}, SIAM J. Discr. Math. \textbf{32} (2018), 2172--2179.

\bibitem{nenadov2016thesis}
R.~Nenadov, \emph{Ramsey and universality properties of random graphs}, Ph.D. thesis, ETH Z\"urich, 2016.

\bibitem{pak2002mixing}
I.~Pak, \emph{Mixing time and long paths in graphs}, Symposium on Discrete Algorithms: Proceedings of the thirteenth annual ACM-SIAM symposium on Discrete algorithms, vol.~6, 2002, pp.~321--328.

\bibitem{Ramsey1930}
F.~P. Ramsey, \emph{On a problem of formal logic}, Proc. London Math. Soc. \textbf{30} (1930), 264--286.

\bibitem{rodl2000size}
V.~R{\"o}dl and E.~Szemer{\'e}di, \emph{{On size Ramsey numbers of graphs with bounded degree}}, Combinatorica \textbf{20} (2000), 257--262.

\bibitem{sah2023diagonal}
A.~Sah, \emph{{Diagonal Ramsey via effective quasirandomness}}, Duke Math. J. \textbf{172} (2023), no.~3, 545--567.

\bibitem{spencer1977asymptotic}
J.~Spencer, \emph{Asymptotic lower bounds for ramsey functions}, Discr. Math. \textbf{20} (1977), 69--76.

\bibitem{Thomason1988}
A.~Thomason, \emph{{An upper bound for some Ramsey numbers}}, J. Graph Theory \textbf{12}, 509--517.

\bibitem{tikhomirov2024}
K.~Tikhomirov, \emph{{On bounded degree graphs with large size-Ramsey numbers}}, Combinatorica \textbf{44} (2024), 9--14.

\bibitem{winter2023lower}
C.~Winter, \emph{{Lower bound on the size-Ramsey number of tight paths}}, J. Combin. \textbf{14} (2023), 271--279.

\end{thebibliography}
	\bibliographystyle{amsplain}
 
\end{document}